\font\smallit=cmti10
\renewcommand\section{\@startsection {section}{1}{\z@}
{-30pt \@plus -1ex \@minus -.2ex}
{2.3ex \@plus.2ex}
{\normalfont\normalsize\bfseries\boldmath}}
\renewcommand\subsection{\@startsection{subsection}{2}{\z@}
{-3.25ex\@plus -1ex \@minus -.2ex}
{1.5ex \@plus .2ex}
{\normalfont\normalsize\bfseries\boldmath}}
\renewcommand{\@seccntformat}[1]{\csname the#1\endcsname. }
\newtheorem{theorem}{Theorem}
\newtheorem{lemma}{Lemma}
\theoremstyle{definition}
\newtheorem{definition}{Definition}
\newcommand{\gr}[1]{{\color{OliveGreen}#1}}
\newcommand{\bl}[1]{{\color{blue}#1}}
\newcommand{\rd}[1]{{\color{red}#1}}
\newcommand{\qmod}[1]{\!\!\!\pmod{#1}}
\newcommand{\C}{\mathbb{C}}
\newcommand{\N}{\mathbb{N}}
\newcommand{\Z}{\mathbb{Z}}
\newcommand{\ii}{\mathbf{i}}
\newcommand{\dsum}{\displaystyle\sum}
\newcommand{\dprod}{\displaystyle\prod}
\newcommand{\jacobi}[2]{\displaystyle\left(\dfrac{#1}{#2}\right)}
\newcommand{\myrightarrow}{{\scriptstyle\rightarrow}}
\newcommand{\myleftarrow}{{\scriptstyle\leftarrow}}
\newcommand{\rrarrow}{\begin{array}{@{\,}c@{\,}} \myrightarrow \\[-8pt] \myrightarrow \end{array}}
\newcommand{\rlarrow}{\begin{array}{@{\,}c@{\,}} \myrightarrow \\[-8pt] \myleftarrow \end{array}}
\newcommand{\rllarrow}{\begin{array}{@{\,}c@{\,}} \myrightarrow \\[-8pt] \myrightarrow \\[-8pt]
        \myleftarrow \end{array}}
\newcommand{\rrlarrow}{\begin{array}{@{\,}c@{\,}} \myrightarrow \\[-8pt] \myleftarrow \\[-8pt]
        \myleftarrow \end{array}}
\DeclareMathOperator{\trace}{tr}
\DeclareMathOperator{\ord}{ord}
\DeclareMathOperator{\sign}{sign}
\DeclareMathOperator{\cl}{c}
\DeclareMathOperator{\id}{id}
\DeclareMathOperator{\lcm}{lcm}
\begin{document}

\begin{center}
\uppercase{\bf \boldmath Determinants of modular Collatz graphs and variants}
\vskip 20pt
{\bf Achilleas Karras}\\
{\smallit Brussels, Belgium}\\
{\tt wintermute.be@gmail.com}\\
\vskip 10pt
{\bf Benne de Weger}\\
{\smallit Faculty of Mathematics and Computer Science, Eindhoven University of
    Technology, Eindhoven, The Netherlands}\\
{\tt b.m.m.d.weger@tue.nl}\\
\end{center}

\centerline{\bf Abstract}
\noindent
The determinants of modular Collatz graphs and the modular Conway amusical permutation graph are
determined, and some interesting number theoretic properties are described.

\pagestyle{myheadings}
\thispagestyle{empty}
\baselineskip=12.875pt
\vskip 30pt


\section{Introduction}
The modular Collatz graphs and Conway's amusical permutation graphs are finite directed graphs
representing the actions of the Collatz $ 3n+1 $-function and Conway's amusical permutation function on
congruence classes of integers. For each modulus $ N > 1 $ the adjacency matrix for such a graph has some
structure, while its determinant shows apparently quite erratic behaviour for varying $ N $. We find
explicit formulas for the determinant in terms of the multiplicative orders modulo the divisors of $ N $ of
$ 3 $ (Collatz) and $ 2 $ (Conway), thus explaining the observed seemingly erratic behaviour from the graph
structure. We briefly touch upon properties of the characteristic polynomials as well. We also give a
generalization to the $ p n + q $-function, involving the multiplicative orders modulo the divisors of
$ N $ of $ p $.

We think that our results have some interest, however, we fail to see how they could help in proving the
Collatz conjecture, or to prove similar statements about Conway's amusical permutation.

\section{Modular Collatz graphs}

\subsection{Basic definitions and properties}
We adopt the convention that $ \N = \{ 1, 2, 3, \ldots \} $, and when $ N > 1 $ we write $ \Z_N $ for
the ring of integers $ \qmod{N} $, usually represented by $ \{ 0, 1, \ldots, N-1 \} $.

\begin{definition} \label{def:collatzfunction}
    The \emph{Collatz function} $ T : \N \to \N $ is defined by
    \begin{equation}
        T(n) = \begin{cases}
            \dfrac{n}{2}    & \text{if } n \text{ is even,} \\[3mm]
            \dfrac{3n+1}{2} & \text{if } n \text{ is odd.}
        \end{cases}
    \end{equation}
\end{definition}

For a given modulus $ N > 1 $ we want to study the behaviour of (iterations of) $ T $, when integers
are taken $ \qmod{N} $. One difficulty is that considering integers $ \qmod{N} $ is not directly
compatible with the concepts of odd and even integers. This is solved by introducing a relation on
$ \Z_N $ induced by the function $ T $, indicated by $ \to_N $, as follows.

\begin{definition} \label{def:collatzrelation}
    For a given modulus $ N > 1 $, the \emph{modular Collatz relation} $ \to_N $ on $ \Z_N $ is defined by
    $ n \to_N t $ if and only if there exists a $ \nu \in \N $ such that $ n \equiv \nu \pmod{N} $ and
    $ t \equiv T(\nu) \pmod{N} $.
\end{definition}

The Collatz relation for modulus $ N $ gives rise to a finite directed graph with $ N $ vertices. Let
$ G_N $ be that graph, $ V_N = \{ 0, 1, \ldots, N-1 \} $ its set of its vertices and $ E_N $ its set of its
edges. Then $ (n,t) \in E_N $ if and only if $ n \to_N t $. Examples for $ N = 2, 3, \ldots, 10 $ are given
in Figure \ref{fig:collatzgraphs}.

\begin{figure}[ht]
    \centering
    \includegraphics[width=0.725\textwidth]{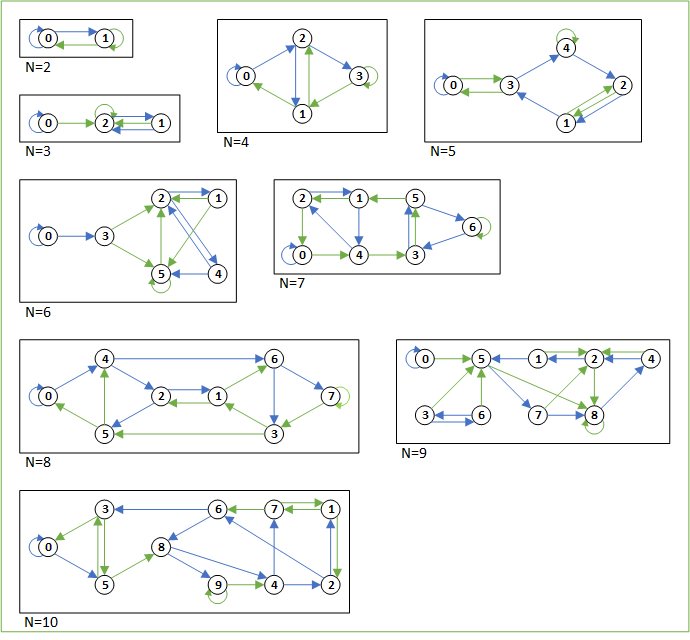}
    \caption{Examples of modular Collatz graphs for $ N = 2, 3, \ldots, 10 $. Blue edges come from
        the rule $ n \to \dfrac{n}{2} $, green edges come from the rule $ n \to \dfrac{3n+1}{2} $.}
    \label{fig:collatzgraphs}
\end{figure}

Note that $ (2,1) \in E_3 $ since $ T(2) = 1 $, but also $ (2,2) \in E_3 $ since $ 5 \equiv 2 \pmod{3} $
and $ T(5) = 8 \equiv 2 \pmod{3} $. It makes sense to draw double edges between some vertices, such as in
$ G_3 $ from $ 1 $ to $ 2 $, as both $ T(1) = 2 $ due to the rule $ n \to \dfrac{3n+1}{2} $ for odd
$ n $, and $ T(4) = 2 $ due to the rule $ n \to \dfrac{n}{2} $ for odd $ n $. Unless stated otherwise
this is how we see the graph. We denote an edge $ (n,t) $ also by $ n \to t $.

Laarhoven and de Weger \cite{LdW} examined the modular Collatz graphs where the moduli are powers of
$ 2 $, and found those graphs to be isomorphic to the well known De Bruijn graphs. For other moduli
$ N $ they remark that the graphs seem to be unstructured, but in this paper we will see that this
remark was unjustified.

We observe that $ T(2N+n) \equiv T(n) \pmod{N} $ holds for all $ n, N $, so that the graph is entirely
determined by the values of $ T(n) $ for $ n \in \{ 0, 1, \ldots, 2N-1 \} $. Indeed, each vertex in
the graph has outdegree $ 2 $. If $ N \not\equiv 0 \pmod{3} $ then every $ n $ also has indegree $ 2 $.
If $ N \equiv 0 \pmod{3} $ then every $ n \not\equiv 2 \pmod{3} $ has indegree $ 1 $, and every
$ n \equiv 2 \pmod{3} $ has indegree $ 4 $.

We investigate some special edges, namely loops ($ n \to n $) and multiple edges.

\begin{definition} \label{def:doubleedges}
    A double edge $ (n,t) $ is called \emph{strongly double} if the edges have the same direction (i.e.\
    $ n \rrarrow t $), and it is called \emph{weakly double} if the edges have opposite directions (i.e.\
    $ n \rlarrow t $).
\end{definition}

\begin{lemma} \label{lem:multipleedges}\
    \begin{itemize}
        \item[\emph{\textbf{(a)}}]
        The only loops in $ G_N $ are $ 0 \to 0 $ and $ N-1 \to N-1 $, for each $ N $.
        \item[\emph{\textbf{(b)}}]
        The only strongly double edges are \\[1mm]
        \begin{tabular}{ll}
            $ \dfrac{N-1}{2} \rrarrow \dfrac{N-1}{4} $  & for $ N \equiv 1 \pmod{4} $, and \\[3mm]
            $ \dfrac{N-1}{2} \rrarrow \dfrac{3N-1}{4} $ & for $ N \equiv 3 \pmod{4} $.
        \end{tabular}
        \item[\emph{\textbf{(c)}}]
        The only weakly double edges are \\[1mm]
        \begin{tabular}{ll}
            $ 1 \rlarrow 2 $ & for all $ N $, and \\
            $ \dfrac{1}{3} N \rlarrow \dfrac{2}{3} N $ & for all $ N $ such that $ 3 \mid N $, and \\[3mm]
            $ \dfrac{1}{5} N - 1 \rlarrow \dfrac{4}{5} N - 1 $ and
            $ \dfrac{2}{5} N - 1 \rlarrow \dfrac{3}{5} N - 1 $ & for all $ N $ such that $ 5 \mid N $. \\
        \end{tabular}
        \item[\emph{\textbf{(d)}}]
        The only triple edges are $ 1 \rrlarrow 2 $ for $ N = 3 $ and $ 1 \rllarrow 2 $ for $ N = 5 $, and
        there are no edges of multiplicity larger than 3.
    \end{itemize}
\end{lemma}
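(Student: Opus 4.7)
The plan is to exploit the fact that every vertex $v \in \Z_N$ has exactly two outgoing edges, corresponding to the two representatives $\nu \in \{v, v+N\}$ lying in $\{0, \ldots, 2N-1\}$; each such edge is of type E (from an even $\nu$, using $\nu/2$) or type O (from an odd $\nu$, using $(3\nu+1)/2$). All four parts follow by a case analysis on these types.

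For (a), a loop at $v$ is either of type E, forcing $\nu/2 \equiv \nu \pmod{N}$ and hence $v \equiv 0$, or of type O, forcing $(3\nu+1)/2 \equiv \nu \pmod{N}$ and hence $v \equiv -1$; in each case a suitable representative clearly exists. For (b), a strong double $n \rrarrow t$ demands that both representatives of $n$ produce the same image modulo $N$. If they have the same parity (which forces $N$ even), the two images differ by $N/2$ or $3N/2$ modulo $N$, neither congruent to $0$. Hence $N$ is odd, and equating the E-image with the O-image modulo $N$ yields the unique solution $n \equiv (N-1)/2 \pmod{N}$; the value of $t$ is then fixed by the parity of $(N-1)/2$, that is, by $N \bmod 4$.

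For (c), a weak double $n \rlarrow t$ with $n \neq t$ consists of an edge $n \to t$ together with a reverse edge $t \to n$, each of type E or O. The E--E case yields $2t \equiv n$ and $2n \equiv t \pmod{N}$, hence $3n \equiv 0 \pmod{N}$; discarding the loop $n = 0$, this gives $N/3 \rlarrow 2N/3$ when $3 \mid N$. The O--O case yields $2t \equiv 3n+1$ and $2n \equiv 3t+1 \pmod{N}$, which combine to $5(n+1) \equiv 0$ and $n + t \equiv -2 \pmod{N}$; discarding the loop $n = -1$ produces the two stated pairs when $5 \mid N$. The mixed case pins down $t \equiv 1$, $n \equiv 2$, giving $1 \rlarrow 2$ for every $N$. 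In each case it remains to verify that the derived residues admit representatives of the required parity, a brief check on $N \bmod 2$ and the residue itself.

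For (d), an edge of multiplicity at least $3$ must contain a strong double, so by (b) its underlying pair is $\{(N-1)/2, (N-1)/4\}$ for $N \equiv 1 \pmod{4}$ or $\{(N-1)/2, (3N-1)/4\}$ for $N \equiv 3 \pmod{4}$. The triple then requires a reverse edge from the second vertex to $(N-1)/2$; computing the two candidate images of that second vertex (splitting further on $N \bmod 8$ to fix the parities of $(N-1)/4$ and $(N-1)/4 + N$, respectively $(3N-1)/4$ and $(3N-1)/4 + N$) and setting them congruent to $(N-1)/2 \pmod{N}$ leaves only $N = 5$ and $N = 3$ respectively, producing $1 \rllarrow 2$ and $1 \rrlarrow 2$. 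Multiplicity $4$ is impossible because by (b) each $N$ admits at most one strong double, and its source is fixed as $(N-1)/2$, so no strong double can exist in the reverse direction. The main obstacle throughout is the systematic bookkeeping of the parities of $N$, $v$, and $v + N$, together with checking that the derived vertices (such as $N/3$, $(N-1)/2$, $N/5 - 1$, etc.) lie in $\{0, \ldots, N-1\}$ and are distinct from $0$, from $N-1$, and from each other; once this is kept straight, the remaining algebra is elementary.
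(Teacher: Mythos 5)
Your proposal is correct and follows essentially the same route as the paper's Appendix A proof: an exhaustive case analysis on the parities of the two representatives $\nu \in \{n, n+N\}$ and the rule applied, reducing each configuration to a linear congruence modulo $N$ (the paper spells out the sixteen rule/representative combinations for part (c) where you group them into the three type-pairs E--E, O--O and mixed, and for (d) both arguments amount to intersecting the strongly double pair from (b) with the weakly double pairs from (c)). The only point handled more lightly than in the paper is the final existence check that each derived residue really is realized by a representative of the required parity, which you correctly flag but do not carry out.
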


\begin{proof}
    See Appendix A.
\end{proof}

\subsection{Adjacency matrices}
For the modular Collatz graph $ G_N $ there is a unique $ N \times N $ matrix, the adjacency matrix
$ C_N = (c_{i,j})_{i,j\in\{0,\ldots,N-1\}} $, of the graph $ G_N $, where $ c_{i,j} = 1 $ whenever
$ (i,j) \in E_N $ is a single edge or a weakly double edge, $ c_{i,j} = 2 $ whenever $ (i,j) \in E_N $
is a strongly double edge, and $ c_{i,j} = 0 $ whenever $ (i,j) \not\in E_N $. The adjacency matrix is an
asymmetric sparse matrix. Indeed, it has two bands of $ 1 $'s (where the matrix has to be viewed as a
torus, i.e.\ left and right sides are identified, and so are top and bottom sides): the one
($ \epsilon_2 $) given by $ (0,0) + k (2,1) \pmod{N} $ for $ k = 0, 1, \ldots, N-1 $ and corresponding to
the edges of the form $ n \to \dfrac{n}{2} $ for $ n = 0, 2, \ldots, 2N-2 $, the other one
($ \epsilon_3 $) given by $ (1,2) + k (2,3) \pmod{N} $ for $ k = 0, 1, \ldots, N-1 $ and corresponding to
the edges of the form $ n \to \dfrac{3n+1}{2} $ for $ n = 1, 3, \ldots, 2N-1 $. Note that in the adjacency
matrix we label rows from top to bottom by $ 0 $ to $ N-1 $, and columns from left to right by $ 0 $ to
$ N-1 $. See Figure \ref{fig:adjacencymatrix} for an example.

\begin{figure}[ht]
    \centering
    \includegraphics[width=0.4\textwidth]{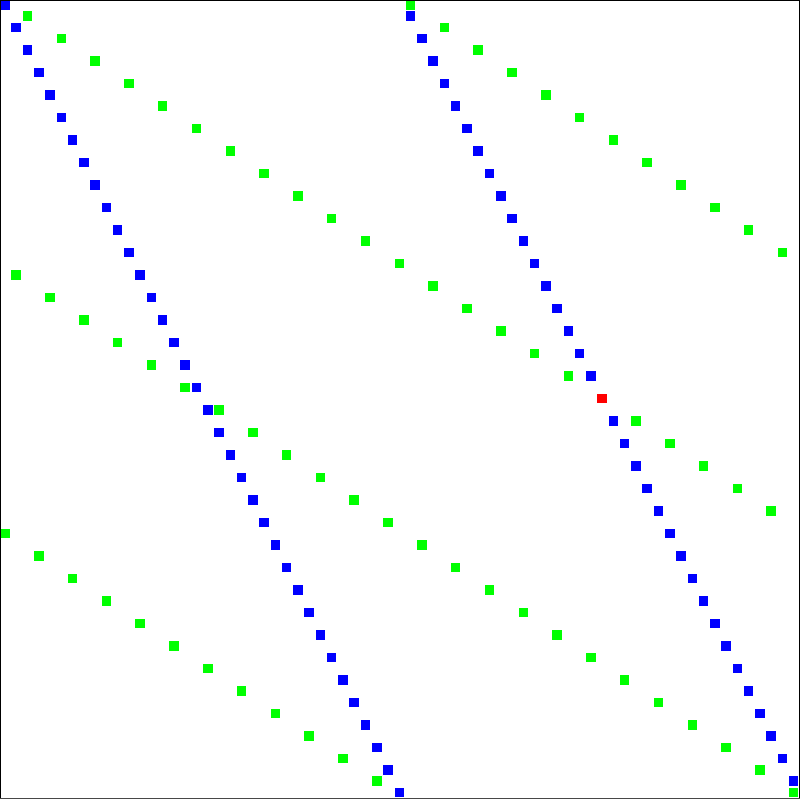}
    \caption{Adjacency matrix for $ N = 71 $, where the two bands $ \bl{\epsilon_2}, \gr{\epsilon_3} $
        are represented with blue and green dots respectively (the positions where a $ 1 $ is in the matrix),
        and a red dot where they intersect (the position where a $ 2 $ is in the matrix).}
    \label{fig:adjacencymatrix}
\end{figure}

The two bands intersect at the position
$ (i,j) = \left( \dfrac{N-1}{2}, \dfrac{N-1}{4} \right) $ when $ N \equiv 1 \pmod{4} $, and
$ (i,j) = \left( \dfrac{N-1}{2}, \dfrac{3N-1}{4} \right) $ when $ N \equiv 3 \pmod{4} $, exactly
corresponding to the double edge, so where $ c_{i,j} = 2 $. The number of edges in the graph thus
equals the sum of the entries in the adjacency matrix:
\[ \dsum_{i,j\in\{0,\ldots,N-1\}} c_{i,j} = 2 N . \]

\subsection{Characteristic polynomials and eigenvalues}
We denote the eigenvalues (in $ \C $) of the adjacency matrix $ C_N $ by $ \lambda_i $ for
$ i = 0, 1, \ldots, N-1 $, where eigenvalues with multiplicity $ > 1 $ occur multiple times. Our main
interest is in the determinant $ \det C_N $, which is the product of the eigenvalues:
\[ \det C_N = \prod_{i=0}^{N-1} \lambda_i . \]
Lemma \ref{lem:multipleedges}(a) shows that the trace of $ C_N $ is $ 2 $. Therefore, the sum of the
eigenvalues is also $ 2 $:
\[ \trace C_N = \dsum_{i=0}^{N-1} \lambda_i = 2 . \]
Because each vertex has outdegree $ 2 $, we see that for every $ N $ the matrix $ C_N $ has an eigenvalue
$ \lambda_0 = 2 $ with eigenvector $ (1,1,\ldots,1)^{\top} $. For a graph theoretic interpretation of the
spectrum see Collatz-Sinogowitz \cite{CS}.

The eigenvalues (with multiplicities) are the roots of the characteristic polynomial $ P_N(x) $, so
\[ P_N(x) = x^N - 2 x^{N-1} + \ldots + (-1)^N \det C_N . \]
In Table \ref{tab:characteristicpolynomialsfull}, shown in Appendix B, we give the factorizations of some
characteristic polynomials of the $ C_N $. A number of patterns arise, some of which we'll prove below.
For example:
\begin{itemize} \setlength{\itemsep}{0pt}
    \item $ 2 $ is always an eigenvalue (as explained above), and always with multiplicitly $ 1 $.
    \item The only other occurring integer eigenvalues are $ -1, 0, 1 $. The pattern for eigenvalue $ 0 $
    will be explained below (it occurs if and only if $ \det C_N = 0 $). We wonder about the eigenvalues
    $ \pm 1 $, we could not find an obvious pattern. For example, at $ N = 65 $ the eigenvalue $ -1 $
    occurs, which does not occur at the divisors $ 5, 13 $. Similarly at $ N = 91 $ the eigenvalue $ 1 $
    occurs, which does not occur at the divisors $ 7, 13 $.
    \item If $ N_0 \mid N $ then $ P_{N_0}(x) \mid P_N(x) $, as we will prove below.
    \item In particular, if $ N $ is even then $ P_N(x) = x^{N/2} P_{N/2}(x) $.
\end{itemize}
It should be clear from the above exposition that the modular Collatz graphs $ G_N $ with modulus
$ N $ even or a multiple of $ 3 $ are special cases.

\begin{lemma} \label{lem:eigenvalues}
    All eigenvalues $ \lambda $ satisfy $ |\lambda| < 2 $ or $ \lambda = 2 $, and eigenvalue $ \lambda = 2 $
    has multiplicity $ 1 $.
\end{lemma}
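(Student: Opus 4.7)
The approach is to decompose $G_N$ into its strongly connected components (SCCs) and apply Perron--Frobenius blockwise. Since every vertex of $G_N$ has outdegree $2$, each row of $C_N$ sums to $2$, so the row-sum bound gives $|\lambda| \le 2$ for every eigenvalue $\lambda$, and the all-ones vector already realises $\lambda = 2$. After permuting rows and columns to list the SCCs in topological order, $C_N$ becomes block-upper-triangular with the SCC submatrices $A_\Omega$ on the diagonal, so the spectrum of $C_N$ is the disjoint union (with multiplicity) of the spectra of the $A_\Omega$.

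Each $A_\Omega$ is an irreducible nonnegative matrix with row sums in $[0,2]$. If $\Omega$ is not a sink SCC then some edge leaves $\Omega$, so at least one row of $A_\Omega$ sums to strictly less than $2$; since no row sums to more than $2$ and $A_\Omega$ is irreducible, the irreducible Perron--Frobenius theorem gives $\rho(A_\Omega) < 2$. If $\Omega$ is a sink, every row of $A_\Omega$ sums to exactly $2$, so $\rho(A_\Omega) = 2$, and provided $N-1 \in \Omega$ the loop $N-1 \to N-1$ from Lemma \ref{lem:multipleedges}(a) gives aperiodicity, forcing $2$ to be a simple eigenvalue of $A_\Omega$ and the only one of modulus $2$. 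It therefore suffices to show that $G_N$ has a \emph{unique} sink SCC, and that this sink contains $N-1$.

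This reduces to showing that from every vertex $a \in \Z_N$ there is a directed walk to $N-1$ in $G_N$, and is the main obstacle of the proof. My plan is to construct such a walk by mixing the two Collatz rules, treated as operations on $\Z_N$: when $N$ is odd, both rules are affine maps $n \mapsto an + b$ of $\Z_N$ (namely $n\mapsto n/2$ and $n\mapsto (3n+1)/2$), and the semigroup they generate should be rich enough to carry any $a$ to $-1 \equiv N-1$; an explicit argument involves iterating the tripling rule from $a$ to trace the orbit $(3/2)^k(a+1)-1 \pmod N$ and interleaving halvings to adjust the $2$-adic contribution. When $2 \mid N$ or $3 \mid N$ the two rules fail to be bijections of $\Z_N$ and need separate treatment, but the symmetry $T(2N+n)\equiv T(n)\pmod N$ from the text together with the structural information in Lemma \ref{lem:multipleedges} makes these sub-cases tractable.
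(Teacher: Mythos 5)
Your reduction is sound as far as it goes: the block-triangularization of $C_N$ over strongly connected components, the strict bound $\rho(A_\Omega)<2$ for non-sink components via the unequal-row-sum form of Perron--Frobenius, the use of the loop at $N-1$ for aperiodicity of the sink block, and the observation that ``every vertex reaches $N-1$'' forces a unique sink SCC containing $N-1$ are all correct. But the proposal stops exactly at the step that carries all the difficulty: you never prove that every $a\in\Z_N$ has a directed walk to $N-1$. The sketch you offer for it does not work as stated: $N-1$ is a fixed point of the rule $n\mapsto(3n+1)/2$, so the orbit $(3/2)^k(a+1)-1$ of any $a\ne N-1$ under the tripling rule alone never reaches $N-1$, and ``interleaving halvings to adjust the $2$-adic contribution'' is not an argument --- it is precisely the assertion that the two maps mix enough, which is what has to be shown. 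The paper does prove this reachability statement (indeed full strong connectedness, Lemma \ref{lem:connected}), but by a genuinely nontrivial idea you would need to supply: since $\pi_2,\pi_3$ are bijections of the finite set $\Z_N$, the semigroup they generate is a group, and the commutator $\pi_3\,\pi_2^{-1}\,\pi_3^{-1}\,\pi_2$ is computed to be the translation $n\mapsto n-2^{-1}$, whence the group contains all translations and acts transitively. Without that ingredient (or an equivalent one) your proof is incomplete; you also defer the cases $2\mid N$ and $3\mid N$, where one of the two rules fails to be a bijection and even the group argument needs separate care.

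It is also worth knowing that the paper's own proof of Lemma \ref{lem:eigenvalues} takes a much shorter route: a direct maximum-modulus argument on an eigenvector. Normalizing so that $v_m=\max_j|v_j|>0$, the row-sum identity $\sum_j c_{m,j}=2$ gives $|\lambda|\,v_m\le 2v_m$, and equality forces $v_j=v_m$ on the out-neighbours of $m$ and hence, by propagation, everywhere; this yields $|\lambda|\le 2$ and the simplicity of $\lambda=2$ in a few lines, with no need for the SCC/Perron--Frobenius apparatus. (That propagation also quietly uses reachability, so some connectivity input is unavoidable either way; the paper isolates it in Lemma \ref{lem:connected} rather than threading it through a component decomposition.) If you want to keep your Perron--Frobenius framing, the minimal fix is to import the translation-commutator argument to establish the unique sink SCC; otherwise the elementary row-sum argument is both shorter and closer to self-contained.
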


\begin{proof}
    Let $ v = (v_1, v_2, \ldots, v_N)^{\top} $ be an eigenvector for eigenvalue $ \lambda $. Without loss of
    generality we may assume that $ |v| = 1 $ and that there is a $ v_m > 0 $ such that $ v_m = \max_j |v_j| $.
    Let us write $ C_N = (c_{i,j}) $, then $ c_{i,j} \in \{ 0, 1, 2 \} $, and the fact that all vertices have
    outdegree $ 2 $ can be expressed as all row sums being $ 2 $, i.e.\ $ \dsum_j c_{i,j} = 2 $ for all $ j $.
    In particular,
    \[ |\lambda| v_m = \left| \dsum_j c_{m,j} v_j \right| \leq \dsum_j c_{m,j} |v_j| \leq
    \dsum_j c_{m,j} v_m = 2 v_m , \]
    so $ |\lambda| \leq 2 $. And if $ |\lambda| = 2 $ then
    $ \left| \dsum_j c_{m,j} v_j \right| = \dsum_j c_{m,j} |v_j| = \dsum_j c_{m,j} v_m $, and this is easily
    seen to imply $ v_j = v_m $ for all $ j $, and then we immediately have $ \lambda = 2 $ and
    $ v = \dfrac{1}{\sqrt{N}} (1,1,\ldots,1) $, so the multiplicity of $ \lambda $ is $ 1 $.
\end{proof}

For eigenvalues $ \pm 1 $ we do not fully understand the situation. It certainly is true (it follows
from the third bullet above) that if $ N_0 \mid N $, then an eigenvalue for $ N_0 $ will also be an
eigenvalue for $ N $. But there is no full multiplicity, counterexamples are $ N = 65 $ where
$ \lambda = -1 $ is an eigenvalue, but it is neither for $ N = 5, 13 $, and $ N = 91 $ where
$ \lambda = 1 $ is an eigenvalue, but it is neither for $ N = 7, 13 $. We computed for all
odd primes up to $ 1000 $ the values of the characteristic polynomials evaluated at $ \pm 1 $, and we
found in both cases that there are three classes of primes: class ``zero'' where the value is $ 0 $,
class ``div'' where the value is a nonzero multiple of $ N $, and class ``non-div'' where the value is
not a multiple of $ N $. All three classes seem to be of about the same size, as is illustrated in Figure
\ref{fig:eigenvalueclasses}. As said, we found no apparent structure in those classes, and all this is
only experimental.

\begin{figure}[ht]
    \centering
    \includegraphics[width=0.49\textwidth]{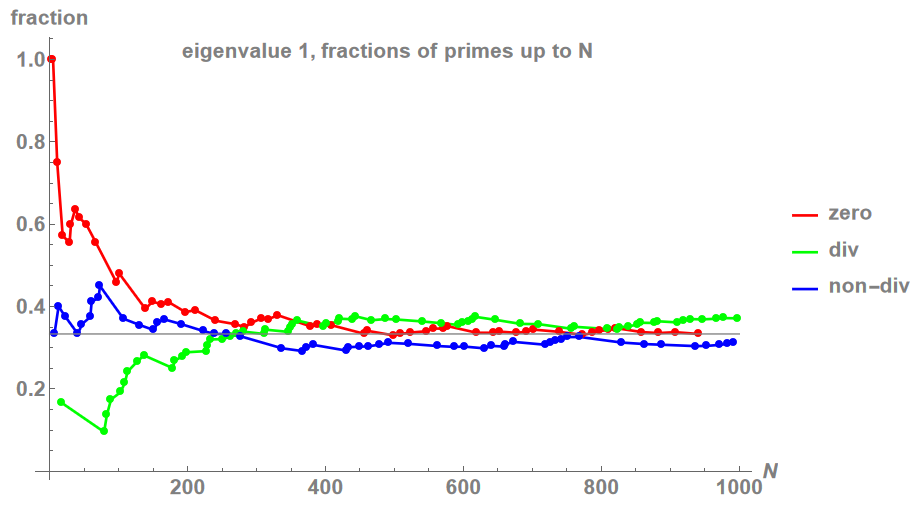}
    \includegraphics[width=0.49\textwidth]{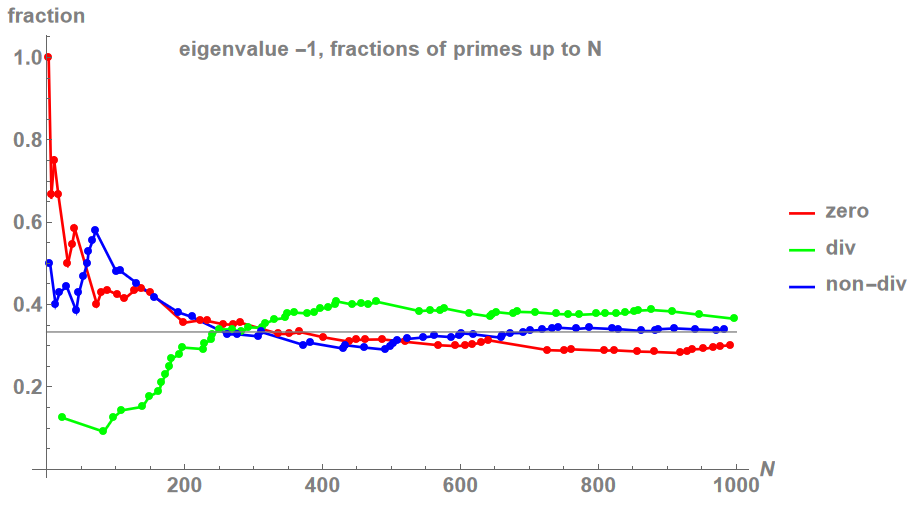}
    \caption{Classes for eigenvalues $ \pm 1 $.}
    \label{fig:eigenvalueclasses}
\end{figure}

\subsection{Connectedness}
Lemma \ref{lem:eigenvalues} relates to the Perron-Frobenius Theorem, implying that for a
strongly connected graph, of which the adjacency matrix necessarily is irreducible, the spectral radius
(the maximal absolute value of the eigenvalues) is an eigenvalue of multiplicity $ 1 $. Since the
eigenvector $ (1,1,\ldots,1) $ is positive and has eigenvalue $ 2 $, the spectral radius of $ C_N $ must
be $ 2 $. Because $ \trace(C_N) = 2 $ the matrix $ C_N $ is even primitive. This implies that its index
of cyclicity equals $ 1 $, and that $ \lambda = 2 $ is the only eigenvalue with absolute value $ 2 $.
See \cite{BS} for a survey covering the relevant results around the Perron-Frobenius Theorem.

This raises the question what can be said about the strong connectedness of our graphs. It is a
direct consequence of the Collatz Conjecture, but it's actually not too hard to prove it.

\begin{lemma} \label{lem:connected}
    The modular Collatz graphs $ G_N $ are strongly connected.
\end{lemma}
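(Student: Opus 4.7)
My plan is to prove strong connectedness of $G_N$ by establishing, for every pair of vertices $u, v \in \Z_N$, the existence of a directed path from $u$ to $v$. This splits into two claims: (i) every vertex can reach the anchor vertex $1$, and (ii) from $1$ every vertex is reachable. Chaining the two resulting paths yields the full statement.

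For (i), given $u \in \Z_N$ I would pick a positive-integer lift $\nu \equiv u \pmod{N}$ and follow the integer Collatz trajectory $\nu, T(\nu), T^2(\nu), \ldots$; reducing modulo $N$ gives a legitimate walk in $G_N$ starting at $u$, and whenever this trajectory reaches $1$ we obtain the desired path. Granting the Collatz conjecture this is immediate, and the conjecture is in fact verified unconditionally for an extraordinary range of starting values, so one may always choose a lift $\nu$ within the verified range. To avoid any reliance on unproven input, one can instead combine the freedom to re-lift at each step with the pigeonhole fact that the sequence of residues is eventually periodic in $\Z_N$, arguing that a suitable sequence of lifts forces the walk into the known two-cycle $1 \to 2 \to 1$.

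For (ii) the idea is dual but more delicate: from $1$ I would explore the graph via the two outgoing edges at every vertex, which from a vertex $w$ land at $T(w) \pmod{N}$ and $T(w + N) \pmod{N}$, generically distinct. I would then argue that the set of vertices reachable from $1$, which is closed under these outgoing edges, must coincide with all of $\Z_N$. This is the main obstacle of the proof.

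A clean way to carry out (ii) when $N$ is odd is to recast the two outgoing edges as the affine maps $\sigma : n \mapsto 2^{-1} n$ and $\tau : n \mapsto 2^{-1}(3n + 1)$ on $\Z_N$, and to show that the semigroup generated by $\sigma, \tau$ acts transitively. When $\gcd(N, 6) = 1$ both maps are bijections of $\Z_N$, and a standard group-theoretic argument on the affine group should suffice. When $2 \mid N$ or $3 \mid N$ the map $\tau$ degenerates (for instance, $\tau$ becomes constant on $\Z_3$), and additional care is needed, potentially via an inductive reduction to smaller moduli or via direct verification in the degenerate cases.
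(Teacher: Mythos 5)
There is a genuine gap: what you have written is a plan whose two load-bearing steps are both missing. For step (i), following the integer Collatz trajectory of a lift of $u$ until it reaches $1$ is essentially assuming the Collatz conjecture; the suggestion to "choose a lift within the verified range" does not work because the smallest positive lift of a residue class modulo $N$ can be as large as $N$, so this only covers $N$ below the current verification bound, not all $N$. The fallback (re-lifting at each step plus pigeonhole) is asserted without any mechanism: eventual periodicity of the residue walk only puts you on \emph{some} directed cycle of $G_N$, and nothing you say forces that cycle to be the one through $1$ and $2$. For step (ii), the sentence "a standard group-theoretic argument on the affine group should suffice" is precisely the content that has to be supplied. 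The paper supplies it as follows: with $\pi_2(n)=2^{-1}n$ and $\pi_3(n)=2^{-1}(3n+1)$, the commutator $\tau=\pi_3\,\pi_2^{-1}\,\pi_3^{-1}\,\pi_2$ is computed explicitly to be the translation $n\mapsto n-2^{-1}$, whose powers give every translation of $\Z_N$; and since permutations of a finite set have finite order, $\pi^{-1}=\pi^{m-1}$ is a \emph{positive} word in the generators, so the semigroup of directed paths coincides with the generated group. This last observation is essential (paths only give you positive words) and is absent from your proposal, where "semigroup" and "group-theoretic argument" are used interchangeably without justification.

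Two further remarks. First, once the semigroup generated by the two out-neighbour maps is shown to contain all translations, you get a directed path from any $u$ to any $v$ in one stroke, so your step (i) is not only broken but unnecessary; the split into "reach $1$" and "leave $1$" adds the hardest (Collatz-flavoured) half for no gain. Second, you are right to flag that $\pi_3$ degenerates when $3\mid N$ (it is $3$-to-$1$, not a bijection) and that $2^{-1}$ does not exist for even $N$; the paper's own proof silently works only for $\gcd(N,6)=1$, so raising this is a fair observation, but "additional care is needed, potentially via an inductive reduction" is again a placeholder rather than an argument for those cases.
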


For the proof, and for later use, we introduce a notation for the two permutations on $ \Z_N $ that
describe the `arrow' rules of the graph $ G_N $.

\begin{definition} \label{def:perm}
    The permutations $ \pi_2, \pi_3 : \Z_N \to \Z_N $ are defined by
    \[ \pi_2(n) \equiv 2^{-1} n \pmod{N} , \quad \pi_3(n) \equiv 2^{-1} (3 n + 1) \pmod{N} . \]
\end{definition}

\begin{proof}
    A path $ n_0 \to n_1 \to \ldots \to n_k $ in the graph corresponds to a product of those two permutations,
    since $ n_i \to n_{i+1} $ means that $ n_{i+1} = \pi(n_i) $ for some $ \pi \in \{ \pi_2, \pi_3 \} $.
    By $ \tau_{\beta} $ we mean the translation $ \tau_{\beta} : n \to n + \beta $, and we will prove that
    the permutation group $ \langle \pi_2, \pi_3 \rangle $ contains all possible translations. This will
    suffice, as a path from $ n_0 $ to $ n_k $ can be constructed from the permutation $ \tau_{n_k-n_0} $.

    Note that because our permutations are finite, for any $ \pi \in \langle \pi_2, \pi_3 \rangle $ there
    is an integer $ m > 0 $ such that $ \pi^m = \text{id} $, and so
    $ \pi^{-1} = \pi^{m-1} \in \langle \pi_2, \pi_3 \rangle $. We now take
    $ \tau = \pi_3 \, \pi_2^{-1} \, \pi_3^{-1} \, \pi_2 $, which is in $ \langle \pi_2, \pi_3 \rangle $, and a
    simple computation shows that it is a translation:
    \[ n \stackrel{\pi_2}{\longrightarrow} 2^{-1} n \stackrel{\pi_3^{-1}}{\longrightarrow} 3^{-1} (n - 1)
    \stackrel{\pi_2^{-1}}{\longrightarrow} 2\cdot 3^{-1} (n - 1) \stackrel{\pi_3}{\longrightarrow}
    n + (2^{-1} - 1) \pmod{N} , \]
    so $ \tau = \tau_{\beta} $ for $ \beta = 2^{-1} - 1 \equiv -2^{-1} \pmod{N} $. Clearly
    $ \tau_{\alpha} = \tau_{\beta}^{-2\alpha} \in \langle \pi_2, \pi_3 \rangle $ for any $ \alpha $.
\end{proof}

\subsection{Determinants of adjacency matrices, first experiments}
We started out with an experimental approach, directly calculating $ \det C_N $ for the primes $ N $
with $ 1 \leq N \leq 300,000 $. For most values of $ N $ we find $ \det C_N = 0 $. Small determinants may
be expected since these are sparse matrices where the number of nonzero entries is $ \approx 2N $, as
shown before, i.e.\ the ratio of nonzero entries is $ \approx \dfrac{2}{N} $, and almost all of those
nonzero entries are $ 1 $. However, there are some notable exceptions of extremely large determinants.
Here are three examples:
\[ \det C_{51,157} = -2^{1,045}, \quad \det C_{221,101} = -2^{3,301}, \quad  \det C_{248,749} =
-2^{4,365}. \]
Trying to find patterns in these data for prime $ N $, we noticed that big determinants show up at
$ N $ being a prime divisor of a number of the form $ 3^k - 1 $. In other words, the determinant seems
to have a relation to the size of the multiplicative order of $ 3 $ modulo $ N $. Soon we guessed the
following formula to hold for prime $ N $, where $ k = \ord_N(3) $:
\begin{equation} \label{eq:firstformula}
    \det C_N = \begin{cases}
        \phantom{-}0                   & \text{ if } k \text{ is even,} \\
        \phantom{-}2 \cdot 2^{(N-1)/k} & \text{ if } k \text{ is odd and } N \equiv \pm 1 \pmod{8}, \\
        -2 \cdot 2^{(N-1)/k} & \text{ if } k \text{ is odd and } N \equiv \pm 3 \pmod{8}.
    \end{cases}
\end{equation}
For all our data up to $ N \leq 300,000 $ this formula matches exactly, and then we could formulate and
prove our main Theorem \ref{thm:main} below, also generalizing to $ N $ not necessarily prime anymore.
With this theory at hand, we could easily extend the computational approach, to reach $ N $ (not
necessarily prime anymore) up to $ 10,000,000 $. To give an impression, we present in
Table \ref{tab:determinantsnonzero} the values of the nonzero determinants for the primes $ N $ up to
$ 1000 $, and in Figure \ref{fig:determinants} a visual representation of the determinants for odd
integers $ N $ up to $ 10 $ million.

\begin{figure}[ht]
    \centering
    \includegraphics[width=0.6\textwidth]{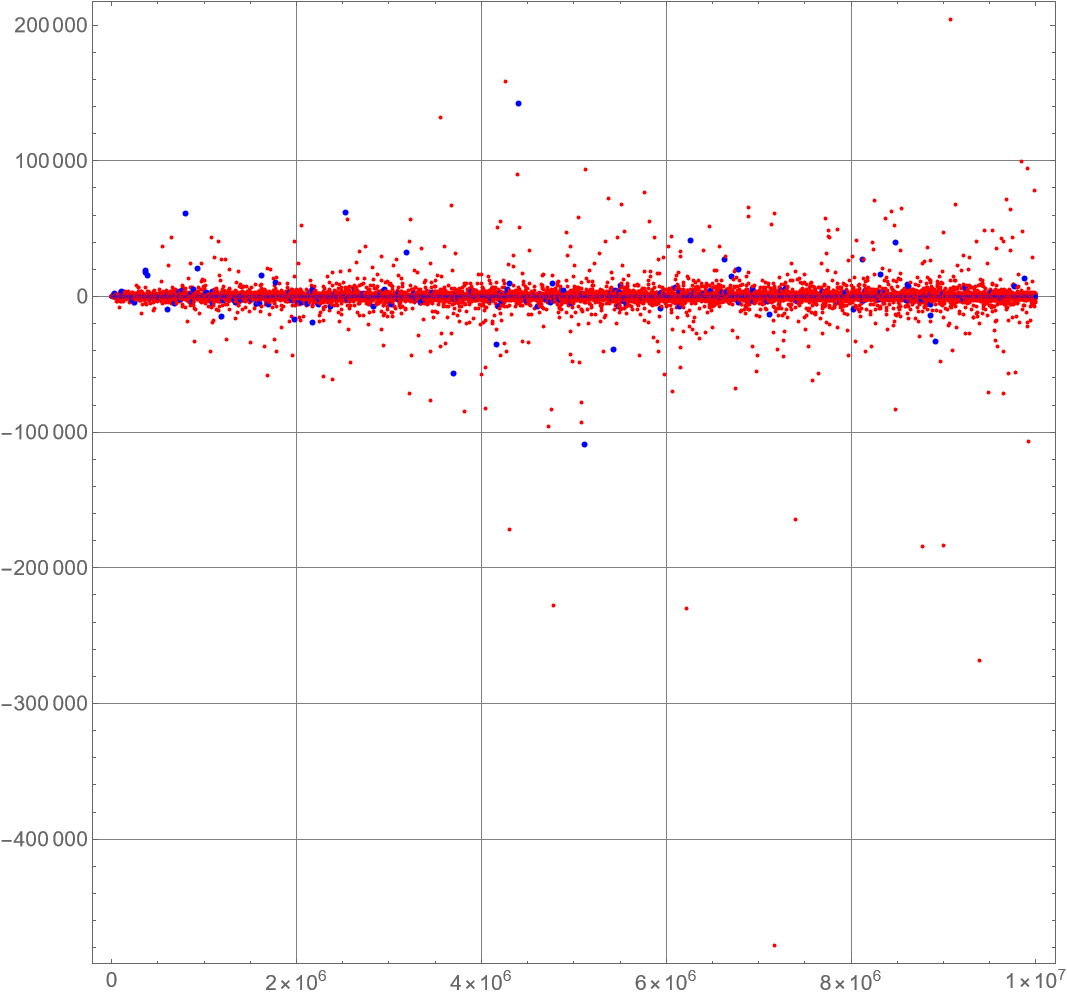}
    \caption{$ \pm \log_2(\pm\det C_N) $ for odd integers $ N $ with $ 3 \leq N < 10,000,000 $. Blue
        dots indicate prime $ N $, red dots indicate composite $ N $. The $ \pm $ indicates the sign of
        $ \det C_N $.}
    \label{fig:determinants}
\end{figure}

\begin{table}[ht] \centering
    {\footnotesize $ \begin{array}{|r|r|} \hline
            N & \det C_N \\ \hline
            3 & -2^1 \\
            11 & -2^3 \\
            13 & -2^5 \\
            23 & 2^3 \\
            47 & 2^3 \\
            59 & -2^3 \\
            71 & 2^3 \\
            83 & -2^3 \\
            107 & -2^3 \\
            109 & -2^5 \\
            131 & -2^3 \\
            167 & 2^3 \\ \hline
        \end{array} $ \
        $ \begin{array}{|r|r|} \hline
            N & \det C_N \\ \hline
            179 & -2^3 \\
            181 & -2^5 \\
            191 & 2^3 \\
            227 & -2^3 \\
            229 & -2^5 \\
            239 & 2^3 \\
            251 & -2^3 \\
            263 & 2^3 \\
            277 & -2^5 \\
            311 & 2^3 \\
            313 & 2^9 \\
            347 & -2^3 \\ \hline
        \end{array} $ \
        $ \begin{array}{|r|r|} \hline
            N & \det C_N \\ \hline
            359 & 2^3 \\
            383 & 2^3 \\
            419 & -2^3 \\
            421 & -2^5 \\
            431 & 2^{11} \\
            433 & 2^{17} \\
            443 & -2^3 \\
            467 & -2^3 \\
            479 & 2^3 \\
            491 & -2^{11} \\
            503 & 2^3 \\
            541 & -2^5 \\ \hline
        \end{array} $ \
        $ \begin{array}{|r|r|} \hline
            N & \det C_N \\ \hline
            563 & -2^3 \\
            587 & -2^3 \\
            599 & 2^3 \\
            601 & 2^9 \\
            647 & 2^3 \\
            659 & -2^3 \\
            683 & -2^{23} \\
            709 & -2^5 \\
            719 & 2^3 \\
            733 & -2^5 \\
            743 & 2^3 \\
            757 & -2^{85} \\ \hline
        \end{array} $ \
        $ \begin{array}{|r|r|} \hline
            N & \det C_N \\ \hline
            827 & -2^3 \\
            829 & -2^5 \\
            839 & 2^3 \\
            863 & 2^3 \\
            887 & 2^3 \\
            911 & 2^3 \\
            947 & -2^3 \\
            971 & -2^3 \\
            983 & 2^3 \\ \hline
            \multicolumn{2}{c}{} \\
            \multicolumn{2}{c}{} \\
            \multicolumn{2}{c}{} \\
        \end{array} $}
    \caption{The primes $ N < 1000 $ for which $ \det C_N \neq 0 $.} \label{tab:determinantsnonzero}
\end{table}

In the larger range we found the `records' given in Table \ref{tab:determinantrecords}.

\begin{table}[ht]
    \centering
    {\footnotesize $ \begin{array}{|c|cc|} \hline
            & N \text{ prime} & N \text{ composite} \\ \hline
            \det > 0 & \det C_{4\,404\,047} = 2^{142\,067} & \det C_{9\,080\,951} = 2^{204\,031} \\
            \det < 0 & \det C_{5\,112\,661} = -2^{108\,781} & \det C_{7\,174\,453} = -2^{478\,317} \\ \hline
        \end{array} $}
    \caption{Record determinants for $ N $ below $ 10,000,000 $.}
    \label{tab:determinantrecords}
\end{table}

Below, immediately after our main result Theorem \ref{thm:main}, we will see how even much larger
examples can easily be found.

\subsection{The Collatz graph structure}
\subsubsection{Even modulus}

Let's first deal with the case of even $ N $.

\begin{lemma} \label{lem:rank}
    For even $ N $ the rank of $ C_N $ equals $ \dfrac{N}{2} $.
\end{lemma}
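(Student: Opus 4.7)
The plan is to prove the lemma by bounding $\mathrm{rank}(C_N)$ above and below by $N/2$. The key structural observation is that when $N$ is even, $n$ and $n+N$ have the same parity, so for each $n \in \{0,\ldots,N-1\}$ the two lifts $\nu \in \{n, n+N\}$ apply the same branch of $T$. A short computation then shows that from each vertex $n$ the two outgoing edges in $G_N$ go precisely to $f(n) \bmod N$ and $(f(n)+N/2) \bmod N$, where $f(n) = n/2$ for even $n$ and $f(n) = (3n+1)/2$ for odd $n$. Since $N/2 \not\equiv 0 \pmod{N}$, these targets are distinct, consistent with Lemma \ref{lem:multipleedges}(b), so $C_N$ is a $0$--$1$ matrix.

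For the upper bound, this pairing yields $c_{n,j} = c_{n, (j+N/2) \bmod N}$ for all $n, j$. Hence column $j$ and column $j+N/2$ of $C_N$ coincide for each $j \in \{0,\ldots,N/2-1\}$, and the $N/2$ vectors $e_j - e_{j+N/2}$ are linearly independent members of $\ker C_N$. This gives $\dim \ker C_N \geq N/2$, and therefore $\mathrm{rank}(C_N) \leq N/2$.

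For the lower bound, I would exhibit an invertible $(N/2) \times (N/2)$ submatrix of $C_N$ by selecting the rows indexed by the even residues $2i$ for $i = 0, \ldots, N/2-1$ (in that order) together with the columns $j = 0, \ldots, N/2-1$. For even $n = 2i$ with $i < N/2$, we have $f(2i) = i \in \{0, \ldots, N/2-1\}$, so row $2i$ of $C_N$ has its two $1$'s at columns $i$ and $i+N/2$; restricting to the first $N/2$ columns leaves only the entry at column $i$. The selected submatrix is thus the identity, so $\mathrm{rank}(C_N) \geq N/2$, and combined with the upper bound this gives equality.

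There is no serious obstacle. The only step requiring genuine insight is the parity-based pairing of the two outgoing edges; once that is in hand, the upper bound follows from identical columns and the lower bound from a trivial identity submatrix, so the argument is essentially one observation about $T$ followed by routine linear algebra.
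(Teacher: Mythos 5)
Your proof is correct and follows essentially the same route as the paper: the parity of $n$ and $n+N$ agreeing for even $N$ gives $c_{i,j}=c_{i,j+N/2}$, hence rank at most $N/2$, and the rows indexed by even residues restricted to the first $N/2$ columns give an identity block for the lower bound. Your write-up merely makes the paper's final sentence (about the $2i$'th row having its first $1$ in position $i$) explicit as an invertible submatrix.
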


\begin{proof}
    We will prove that for even $ N $ always $ c_{i,j} = c_{i,j+N/2} $. First note that Lemma
    \ref{lem:multipleedges}(b) implies that $ c_{i,j} = 2 $ does not occur as $ N $ is even. We have that
    $ c_{i,j} = 1 $ if and only if there exists an $ n \in \{ 0, 1, \ldots, 2N-1 \} $ such that
    $ n \equiv i \pmod{N} $ and $ T(n) \equiv j \pmod{N} $. We take $ n' = n + N $, then $ n $ and $ n' $ are
    both even, or both odd. Then still $ n' \equiv i \pmod{N} $, but for even $ n' $ we have
    $ T(n') = \dfrac{n'}{2} = \dfrac{n}{2} + \dfrac{N}{2} = T(n) + \dfrac{N}{2} $, and for odd $ n' $ we have
    $ T(n') = \dfrac{3n'+1}{2} = \dfrac{3n+1}{2} + \dfrac{3N}{2} = T(n) + \dfrac{3N}{2} \equiv T(n) +
    \dfrac{N}{2} \pmod{N} $, so in both cases $ T(n') \equiv j + \dfrac{N}{2} \pmod{N} $, showing that indeed
    $ c_{i,j} = 1 $ if and only if $ c_{i,j+N/2} = 1 $. So the rank is $ \leq \dfrac{N}{2} $. But for each
    $ i \in \left\{ 0, 1, \ldots, \dfrac{N}{2} \right\} $ the $ 2i $'th row has the first $ 1 $ on the
    $ i $'th position, so the rank is also $ \geq \dfrac{N}{2} $.
\end{proof}

The following Lemma completely deals with the case of even $ N $.

\begin{lemma} \label{lem:even}\
    \begin{itemize}
        \item[\emph{\textbf{(a)}}]
        $ \det C_N = 0 $ for all even $ N $.
        \item[\emph{\textbf{(b)}}]
        If $ N = 2^k N_0 $ where $ k \geq 1 $ and $ N_0 $ is odd, then $ P_N(x) = x^{N-N_0} P_{N_0}(x) $.
    \end{itemize}
\end{lemma}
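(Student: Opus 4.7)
Part (a) is a free gift from Lemma \ref{lem:rank}, since rank $ N/2 $ is strictly less than $ N $ for every even $ N $. So the real work is in part (b), and the plan is to derive a one-step doubling identity $ P_N(x) = x^{N/2} P_{N/2}(x) $ valid for \emph{every} even $ N $, and then iterate it $ k $ times down to $ N_0 $.

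To produce the doubling identity I would make the block structure behind Lemma \ref{lem:rank} explicit. Write $ N_1 = N/2 $ and re-index the rows and columns of $ C_N $ by pairs $ (a,s) \in \{0,\ldots,N_1-1\} \times \{0,1\} $ via $ (a,s) \leftrightarrow a + s N_1 $. The proof of Lemma \ref{lem:rank} says that columns $ (b,0) $ and $ (b,1) $ coincide, so $ C_N $ takes the form
\[ C_N = \begin{pmatrix} A_0 & A_0 \\ A_1 & A_1 \end{pmatrix}, \]
where $ A_s $ is the $ N_1 \times N_1 $ matrix with $ A_s(a,b) = 1 $ if and only if $ b \equiv T(a + s N_1) \pmod{N_1} $. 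The crucial observation is that $ A_0 + A_1 = C_{N_1} $: the two outgoing edges from vertex $ a $ in $ G_{N_1} $ come precisely from the two representatives $ a $ and $ a + N_1 $ in $ \{0,\ldots,2N_1-1\} $, which are the rows encoded by $ A_0 $ and $ A_1 $ respectively. I expect this identification to be the main conceptual checkpoint, since it has to work irrespective of the parities of $ a $ and $ N_1 $, but it falls straight out of the definition of $ G_{N_1} $.

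Given the block form, the computation of $ \det(xI_N - C_N) $ is routine linear algebra. Subtracting block column 2 from block column 1 in $ \begin{pmatrix} xI - A_0 & -A_0 \\ -A_1 & xI - A_1 \end{pmatrix} $ produces a first block column $ \begin{pmatrix} xI \\ -xI \end{pmatrix} $, and then adding block row 1 to block row 2 kills the $ -xI $ below and leaves $ xI - (A_0 + A_1) = xI - C_{N_1} $ in the bottom right, so
\[ P_N(x) = \det(xI) \cdot \det(xI - C_{N_1}) = x^{N/2} P_{N/2}(x). \]

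Finally, iterating this identity $ k $ times on $ N = 2^k N_0, 2^{k-1} N_0, \ldots, 2 N_0 $ gives
\[ P_N(x) = x^{N/2 + N/4 + \cdots + N_0} P_{N_0}(x) = x^{N_0 (2^k - 1)} P_{N_0}(x) = x^{N - N_0} P_{N_0}(x), \]
which is part (b). Part (a) is then either read off as $ P_N(0) = 0 $, or simply reused from Lemma \ref{lem:rank}.
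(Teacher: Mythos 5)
Your proposal is correct, and it shares the paper's central structural insight: part (a) from Lemma \ref{lem:rank}, and for part (b) the block decomposition $ C_N = \left(\begin{smallmatrix} A_0 & A_0 \\ A_1 & A_1 \end{smallmatrix}\right) $ together with the key identity $ A_0 + A_1 = C_{N/2} $, which you justify exactly as the paper does (the two representatives $ a $ and $ a+N_1 $ of a residue class mod $ N_1 $ account for the two outgoing edges in $ G_{N_1} $). Where you genuinely diverge is in how the identity $ P_N(x) = x^{N/2}P_{N/2}(x) $ is extracted from that block form. The paper argues with eigenvectors: the $ N/2 $ vectors $ e_j - e_{j+N/2} $ span a kernel contributing $ x^{N/2} $, and any eigenvector of $ C_N $ outside that space folds (via $ v_A + v_B $) to an eigenvector of $ C_{N/2} $ with the same eigenvalue. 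As written, that argument only tracks geometric multiplicities and leaves a small gap when $ C_{N/2} $ is not diagonalizable (the paper's phrase ``the space spanned by these eigenvalues has dimension $ N/2 $'' is doing more work than it strictly justifies). Your block column/row reduction of $ xI - C_N $ to the triangular form $ \left(\begin{smallmatrix} xI & -A_0 \\ 0 & xI - C_{N/2} \end{smallmatrix}\right) $ computes the characteristic polynomial directly and so handles algebraic multiplicities automatically; it is the cleaner and more watertight route to the same conclusion. The final iteration down to $ N_0 $, with exponent $ N/2 + N/4 + \cdots + N_0 = N - N_0 $, matches the paper's concluding induction.
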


\begin{proof}
    The proof of (a) is immediate from Lemma \ref{lem:rank}. \\
    Note that the argument in the proof of Lemma \ref{lem:rank} that $ c_{i,j} = c_{i,j+N/2} $ implies that
    the left and right halves of the matrix $ C_N $ are identical. When we denote the unit vectors in the
    usual way by $ e_j $ for $ j = 0, 1, \ldots, N-1 $, then clearly the vectors $ e_j - e_{j+N/2} $ for
    $ j = 0, 1, \ldots, N/2-1 $ are independent eigenvectors for the eigenvalue $ 0 $, explaining a
    contribution of $ x^{N/2} $ to $ P_N(x) $. Let $ E_0 $ be the space spanned by those eigenvectors. \\
    We partition $ C_N $ into four $ N/2 \times N/2 $-matrices say
    $ C_N = \begin{pmatrix} A_N & A_N \\ B_N & B_N \end{pmatrix} $. Note that $ c_{i,j} $ in $ C_{N/2} $ is
    non-zero if and only if there exists an $ n \in \{ 0, 1, \ldots, N-1 \} $ such that
    $ n \equiv i \pmod{N/2} $ and $ T(n) \equiv j \pmod{N/2} $, and then $ n = i $ or $ n = i + N/2 $. This
    shows that $ A_N + B_N = C_{N/2} $. \\
    Assume that $ \begin{pmatrix} v_A \\ v_B \end{pmatrix} \not\in E_0 $ is an eigenvector for $ C_N $ for an
    eigenvalue $ \lambda $, where $ v_A, v_B $ are of dimension $ N/2 $. Then
    \[ \lambda \begin{pmatrix} v_A \\ v_B \end{pmatrix} = C_N \begin{pmatrix} v_A \\ v_B \end{pmatrix} =
    \begin{pmatrix} A_N & A_N \\ B_N & B_N \end{pmatrix} \begin{pmatrix} v_A \\ v_B \end{pmatrix} =
    \begin{pmatrix} A_N (v_A + v_B) \\ B_N (v_A + v_B) \end{pmatrix}, \]
    and adding the rows now gives $ \lambda (v_A + v_B) = (A_N + B_N) (v_A + v_B) = C_{N/2} (v_A + v_B) $, so
    $ v_A + v_B $ is an eigenvector for $ C_{N/2} $ for the same eigenvalue $ \lambda $. The space spanned
    by these eigenvalues has dimension $ N/2 $, which proves that $ P_N(x) = x^{N/2} P_{N/2}(x) $.
    By induction the result follows.
\end{proof}

All this implies that we can from now on concentrate on odd $ N $.

\subsubsection{Odd modulus, initial musings}

For the sake of completeness we describe the situation for the trivial modulus $ N = 1 $. Then
clearly $ V_1 = \{ 0 \} $, and $ E_1 = \{ (0,0) \} $, but this edge counts double as it comes from
both rules $ n \to \dfrac{n}{2} $ and $ n \to \dfrac{3n+1}{2} $. So $ C_1 = (2) $, and
$ \det C_1 = 2 $.

We start with the characteristic polynomial properties.

\begin{lemma} \label{lem:oddcharpol}
    If $ N $ is odd and $ N_0 \mid N $, then $ P_{N_0}(x) \mid P_N(x) $.
\end{lemma}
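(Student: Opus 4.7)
The plan is to exhibit a $C_N$-invariant subspace $W \subset \C^N$ of dimension $N_0$ on which $C_N$ acts with characteristic polynomial $P_{N_0}(x)$; extending a basis of $W$ to a basis of $\C^N$ puts $C_N$ in block upper-triangular form with top-left block the matrix of $C_N|_W$, yielding $P_N(x) = P_{N_0}(x) \cdot Q(x)$ for some $Q(x) \in \Z[x]$.

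Because $N$, and hence $N_0$, is odd, the permutations $\pi_2, \pi_3$ of Definition \ref{def:perm} are well-defined at both moduli; write $\pi_k^{(M)}$ for the version at modulus $M$. The explicit formulas $\pi_2(n) = 2^{-1} n$ and $\pi_3(n) = 2^{-1}(3n+1)$ make it immediate that reduction $\Z_N \to \Z_{N_0}$ intertwines them:
\[ \pi_k^{(N)}(i) \equiv \pi_k^{(N_0)}(i \bmod N_0) \pmod{N_0} \qquad (k = 2, 3) . \]
Moreover, for odd modulus the two outgoing edges from any vertex $i$ in $G_N$ go to $\pi_2^{(N)}(i)$ and $\pi_3^{(N)}(i)$, coinciding exactly in the strongly-double case of Lemma \ref{lem:multipleedges}(b); hence
\[ (C_N)_{i,j} = [\pi_2^{(N)}(i) = j] + [\pi_3^{(N)}(i) = j] , \]
with an analogous formula at modulus $N_0$.

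Next I would define the pullback $\Phi : \C^{N_0} \to \C^N$ by $(\Phi w)_i = w_{i \bmod N_0}$, which is clearly injective. The crux is the entrywise identity
\[ \dsum_{\substack{j \in \Z_N \\ j \equiv j_0 \qmod{N_0}}} (C_N)_{i,j} = (C_{N_0})_{i \bmod N_0,\, j_0} \qquad \text{for all } i \in \Z_N,\ j_0 \in \Z_{N_0} , \]
which follows in one line from the two displays above: each Iverson-bracket term on the left contributes $1$ to exactly the residue class of $\pi_k^{(N_0)}(i \bmod N_0)$ modulo $N_0$. Multiplying by $w_{j_0}$ and summing over $j_0$ yields $C_N \Phi = \Phi C_{N_0}$, so $W := \Phi(\C^{N_0})$ is $C_N$-invariant and $\Phi$ conjugates $C_{N_0}$ to $C_N|_W$; hence the characteristic polynomial of $C_N|_W$ equals $P_{N_0}(x)$, completing the proof.

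The argument is essentially the functoriality of the modular Collatz graph construction under reduction of moduli, and I expect no genuine obstacle. The only substantive point to verify is the entrywise identity above, and it follows directly from the noted compatibility of $\pi_2, \pi_3$ with reduction; the oddness hypothesis is used exactly once, to ensure invertibility of $2$ modulo both $N$ and $N_0$ so that $\pi_2, \pi_3$ are defined in the first place.
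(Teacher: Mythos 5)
Your proof is correct and rests on exactly the same computation as the paper's: your entrywise identity $\sum_{j\equiv j_0\,(N_0)}(C_N)_{i,j}=(C_{N_0})_{i\bmod N_0,\,j_0}$ is the paper's observation that the block-column sums of $C_N$ equal $C_{N_0}$, and your map $\Phi$ sends $v$ to the paper's stacked vector $(v,\ldots,v)^{\top}$. If anything, your formulation via the intertwining $C_N\Phi=\Phi C_{N_0}$ and the invariant subspace $W$ is slightly tidier, since it delivers divisibility of the characteristic polynomials with multiplicities directly, whereas the paper's phrasing in terms of lifted eigenvectors leaves that last step implicit.
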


\begin{proof}
    Partition $ C_N $ into submatrices of size $ N_0 \times N_0 $, say
    \[ C_N = \begin{pmatrix} A_{1,1}     & \cdots & A_{1,N/N_0}     \\
        \vdots      &        & \vdots          \\
        A_{N/N_0,1} & \cdots & A_{N/N_0,N/N_0} \end{pmatrix} . \]
    We start by showing that adding all submatrices in any row yields $ C_{N_0} $, i.e.\
    $ \sum_j A_{i,j} = C_{N_0} $ for all $ i $.
    We write $ C_N = (c_{i,j}) $ and $ C_{N_0} = (c_{i^{\ast},j^{\ast}}^{\ast}) $. Take the $ i $'th row in
    $ C_N $ for any $ i \in \{ 0, 1, \ldots, N-1 \} $, and write $ i = i^{\ast} + \ell N_0 $, where
    $ i^{\ast} \in \{ 0, 1, \ldots, N_0-1 \} $ and $ \ell \in \{ 0, 1, \ldots, N/N_0-1 \} $. This row has one
    or two nonzero entries, say $ c_{i,j_1}, c_{i,j_2} $ for some $ j_1, j_2 \in \{ 0, 1, \ldots, N-1 \} $,
    where $ c_{i,j_1} = c_{i,j_2} = 1 $ if $ j_1 \neq j_2 $ and $ c_{i,j_1} = c_{i,j_2} = 2 $ if
    $ j_1 = j_2 $. Let's write $ j_1 = j_1^{\ast} + k_1 N_0 $ and $ j_2 = j_2^{\ast} + k_2 N_0 $, where
    $ j_1^{\ast}, j_2^{\ast} \in \{ 0, 1, \ldots, N_0-1 \} $ and
    $ k_1, k_2 \in \{ 0, 1, \ldots, N/N_0-1 \} $. Then $ c_{i^{\ast},j^{\ast}}^{\ast} \geq 1 $ if and only if
    $ j^{\ast} = j_1^{\ast} $ or $ j^{\ast} = j_2^{\ast} $, with $ c_{i^{\ast},j^{\ast}}^{\ast} = 2 $ if and
    only if $ j_1^{\ast} = j_2^{\ast} $, and $ c_{i^{\ast},j^{\ast}}^{\ast} = 1 $ otherwise. Anyway, we get
    that the $ i^{\ast} $'th row of $ C_{N_0} $ equals the sum of the $ i^{\ast} $'th rows of
    $ A_{\ell,1}, \ldots, A_{\ell,N/N_0-1} $, and this suffices. Note that this result is not necessarily
    true for even $ N $.

    Take an eigenvector $ v $ of $ C_{N_0} $ with eigenvalue $ \lambda $. Then $ (v,v,\ldots,v)^{\top} $ is
    an eigenvector of $ C_N $ for the same eigenvalue $ \lambda $, because
    \begin{eqnarray*}
        C_N (v,\ldots,v)^{\top} & = &
        \begin{pmatrix} A_{1,1}     & \cdots & A_{1,N/N_0}     \\
            \vdots      &        & \vdots          \\
            A_{N/N_0,1} & \cdots & A_{N/N_0,N/N_0} \end{pmatrix}
        \begin{pmatrix} v \\ \vdots \\ v \end{pmatrix} =
        \begin{pmatrix} \sum_j A_{1,j} \; v \\ \vdots \\ \sum_j A_{N/N_0,j} \; v \end{pmatrix} \\
        & = & \begin{pmatrix} C_{N_0} v \\ \vdots \\ C_{N_0} v \end{pmatrix} =
        \begin{pmatrix} \lambda v \\ \vdots \\ \lambda v \end{pmatrix} = \lambda (v,\ldots,v)^{\top} ,
    \end{eqnarray*}
    and this suffices to prove the lemma.
\end{proof}

This gives some minimal information about $ \det C_N $ for composite odd $ N $, because
$ \det C_N = \pm P_N(0) $, in particular that $ \det C_N = 0 $ if and only if there is a prime divisor
$ N_0 $ of $ N $ for which $ \det C_{N_0} = 0 $, and that for any divisor $ N_0 $ of $ N $ we have
$ \det C_{N_0} \mid \det C_N $. But that's not much, so we now develop another strategy to look at
determinants. To do so we dive into the graph structure.

\subsubsection{Graph structure for odd modulus not divisible by 3} \label{sec:graphstructurenotdiv3}
For a given $ a \in \Z_N^{\ast} $ the order $ \ord_N(a) $ is defined as the smallest positive integer
$ k $ such that $ a^k \equiv 1 \pmod{N} $. For convenience we define $ \ord_1(a) = 1 $ for any $ a $.
It is well known that $ \ord_N(a) \mid \phi(N) $, where $ \phi(N) = \# \Z_N^{\ast} $ is the Euler
Totient function for $ N $.

In general, when $ N $ is odd and $ N \not\equiv 0 \pmod{3} $, each vertex has outdegree $ 2 $ (because
of the pair $ n, n+N $ exactly one is even and one is odd) and indegree $ 2 $ (because both
$ \dfrac{m}{2} \equiv n \pmod{N} $ and $ \dfrac{3m+1}{2} \equiv n \pmod{N} $ have exactly one solution
$ m \pmod{N} $ for each $ n \pmod{N} $).

We will now study the permutations $ \pi_2, \pi_3 $ defined in Definition \ref{def:perm}. Both permutations
can be decomposed into cycles.

A cycle of $ \pi_2 $ that contains $ n $ also contains $ 2^{-1}n, 2^{-2}n, \ldots \pmod{N} $, and
clearly the length $ k $ of this cycle is the smallest positive $ k $ for which
$ 2^{-k} n \equiv n \pmod{N} $, or equivalently, $ 2^k n \equiv n \pmod{N} $. If $ \gcd(n,N) = 1 $
then we find $ k = \ord_N(2) $, and more generally, if $ \gcd(n,N) = d $ then we find
$ k = \ord_{N/d}(2) $. And the number of cycles corresponding to a divisor $ d $ of $ \phi(N) $ is
exactly $ \dfrac{\phi(N/d)}{\ord_{N/d}(2)} $. An example: with $ N = 175 = 5^2 \times 7 $ we have
$ \phi(175) = 120 $, data as in Table \ref{tab:cyclelengthspi2}, and cycles in Table \ref{tab:cyclespi2}.

\begin{table}[ht]
    \centering
    $ \begin{array}{|rr|rrrrrr|} \hline
        &                                d &   1 &  5 &  7 & 25 & 35 & 175 \\ \hline
        &                        \phi(N/d) & 120 & 24 & 20 &  6 &  4 &   1 \\
        \text{cycle length} &                    \ord_{N/d}(2) &  60 & 12 & 20 &  3 &  4 &   1 \\
        \text{\# cycles}    & \dfrac{\phi(N/d)}{\ord_{N/d}(2)} &   2 &  2 &  1 &  2 &  1 &   1 \\ \hline
    \end{array} $
    \caption{Cycle lengths and counts for $ \pi_2 $ for $ N = 175 $.}
    \label{tab:cyclelengthspi2}
\end{table}

\begin{table}[ht]
    \centering
    \fbox{\begin{minipage}{\textwidth} \scriptsize
            $ (1\;88\;44\;22\;11\;93\;134\;67\;121\;148\;74\;37\;106\;53\;114\;57\;116\;58\;29\;102\;51\;113\;
            144\;72\;36\;18\;9\;92\;46\;23\;99 $ \\ \hspace*{5pt}
            $ 137\;156\;78\;39\;107\;141\;158\;79\;127\;151\;163\;169\;172\;86\;43\;109\;142\;71\;123\;149\;162\;81\;128\;64\;32\;16\;
            8\;4\;2) $, \\
            $ (3\;89\;132\;66\;33\;104\;52\;26\;13\;94\;47\;111\;143\;159\;167\;171\;173\;174\;87\;131\;153\;
            164\;82\;41\;108\;54\;27\;101\;138\;69 $ \\ \hspace*{5pt}
            $ 122\;61\;118\;59\;117\;146\;73\;124\;62\;31\;103\;139\;157\;166\;83\;129\;152\;76\;38\;19\;97\;136\;68\;34\;17\;
            96\;48\;24\;12\;6) $, \\
            $ (5\;90\;45\;110\;55\;115\;145\;160\;80\;40\;20\;10) $, \\
            $ (15\;95\;135\;155\;165\;170\;85\;130\;65\;120\;60\;30) $, \\
            $ (7\;91\;133\;154\;77\;126\;63\;119\;147\;161\;168\;84\;42\;21\;98\;49\;112\;56\;28\;14) $, \\
            $ (25\;100\;50) $, \\
            $ (75\;125\;150) $, \\
            $ (35\;105\;140\;70) $, \\
            $ (0) $
    \end{minipage}}
    \caption{Cycles for $ \pi_2 $ for $ N = 175 $.}
    \label{tab:cyclespi2}
\end{table}

For the permutation $ \pi_3 $ the situation is similar, but slightly different. First we note that when
iterating $ n \to \dfrac{3n+1}{2} $, after $ i $ iterations we have arrived at
$ \left(\dfrac{3}{2}\right)^i(n+1) - 1 $, which is easy to show by induction.
It follows that the length $ k $ of the cycle starting
with $ n $ is the smallest positive integer $ k $ for which
$ \left(\dfrac{3}{2}\right)^k(n+1) - 1 \equiv n \pmod{N} $, so now we find
$ k = \ord_{N/d}(3\cdot2^{-1}) $, where this time $ d = \gcd(n+1,N) $. For the same example
$ N = 175 $ we give data as in Table \ref{tab:cyclelengthpi3}, and cycles in Table \ref{tab:cyclespi3}.

\begin{table}[ht]
    \centering
    $ \begin{array}{|rr|rrrrrr|} \hline
        &                                   d &   1 &  5 &  7 & 25 & 35 & 175 \\ \hline
        &                           \phi(N/d) & 120 & 24 & 20 &  6 &  4 &   1 \\
        \text{cycle length} &            \ord_{N/d}(3\cdot2^{-1}) &  30 &  6 & 10 &  6 &  2 &   1 \\
        \text{\# cycles} & \dfrac{\phi(N/d)}{\ord_{N/d}(3\cdot2^{-1})} & 4 & 4 & 2 & 1 &  2 &   1 \\ \hline
    \end{array} $
    \caption{Cycle lengths and counts for $ \pi_3 $ for $ N = 175 $.}
    \label{tab:cyclelengthpi3}
\end{table}

\begin{table}[ht]
    \centering
    \fbox{\begin{minipage}{\textwidth} \scriptsize
            $ (1\;2\;91\;137\;31\;47\;71\;107\;161\;67\;101\;152\;141\;37\;56\;172\;171\;82\;36\;142\;126\;102\;
            66\;12\;106\;72\;21\;32\;136\;117) $, \\
            $ (3\;5\;8\;100\;63\;95\;143\;40\;148\;135\;28\;130\;108\;75\;113\;170\; 168\;165\;73\;110\;78\;30\;
            133\;25\;38\;145\;43\;65\;98\;60) $, \\
            $ (7\;11\;17\;26\;127\;16\;112\;81\;122\;96\;57\;86\;42\;151\;52\;166\;162\;156\;147\;46\;157\;61\;
            92\;51\;77\;116\;87\;131\;22\;121) $, \\
            $ (10\;103\;155\;58\;0\;88\;45\;68\;15\;23\;35\;53\;80\;33\;50\;163\;70\;18\;115\;173\;85\;128\;105\;
            158\;150\;138\;120\;93\;140\;123) $, \\
            $ (4\;94\;54\;169\;79\;119) $, \\
            $ (9\;14\;109\;164\;159\;64) $, \\
            $ (19\;29\;44\;154\;144\;129) $, \\
            $ (39\;59\;89\;134\;114\;84) $, \\
            $ (6\;97\;146\;132\;111\;167\;76\;27\;41\;62) $, \\
            $ (13\;20\;118\;90\;48\;160\;153\;55\;83\;125) $, \\
            $ (24\;124\;99\;149\;49\;74) $, \\
            $ (34\;139) $, \\
            $ (69\;104) $, \\
            $ (174) $
    \end{minipage}}
    \caption{Cycles for $ \pi_3 $ for $ N = 175 $.}
    \label{tab:cyclespi3}
\end{table}

So this describes the structure in the modular Collatz graphs when $ 3 \nmid N $.

\subsubsection{Graph structure for odd modulus divisible by 3} \label{sec:graphstructurediv3}
For the rule $ n \to \dfrac{n}{2} \pmod{N} $ the situation is exactly the same as shown in the
previous section. For the rule $ n \to \dfrac{3n+1}{2} \pmod{N} $ however things are different,
because now the rule does not anymore induce a permutation on $ V_N $. Indeed, we have that the map
is three-to-one: clearly $ n, n+\dfrac{N}{3}, n+2\dfrac{N}{3} $ all are mapped to
$ \dfrac{3n+1}{2} \pmod{N} $. Now assume that $ n \pmod{\dfrac{N}{3}} $ is in a cycle modulo
$ \dfrac{N}{3} $, with cycle length $ k $. Then
$ \left(\dfrac{3}{2}\right)^k(n+1)-1 \equiv n \pmod{\dfrac{N}{3}} $, and so
$ \left(\dfrac{3}{2}\right)^k(n+1)-1 \equiv n $ or $ n + \dfrac{N}{3} $ or
$ n + 2 \dfrac{N}{3} \pmod{N} $, and it follows that one of those three is in a cycle modulo $ N $ of
length $ k $, while the other two are not in a cycle. And if $ n \pmod{\dfrac{N}{3}} $ is not in a
cycle modulo $ \dfrac{N}{3} $, it must be in a three-to-one tree structure. It follows that for
$ N = 3^h N_0 $ with $ h \geq 1 $ and $ 3 \nmid N_0 $, the cycles in the Collatz graph modulo
$ N $ are exactly the same as those in the Collatz graph modulo $ N_0 $, while on each vertex in a
cycle there are two three-to-one trees of depth $ h - 1 $. For an example, see Figure
\ref{fig:cyclespi3}.

\begin{figure}[ht]
    \centering
    \includegraphics[width=0.51\textwidth]{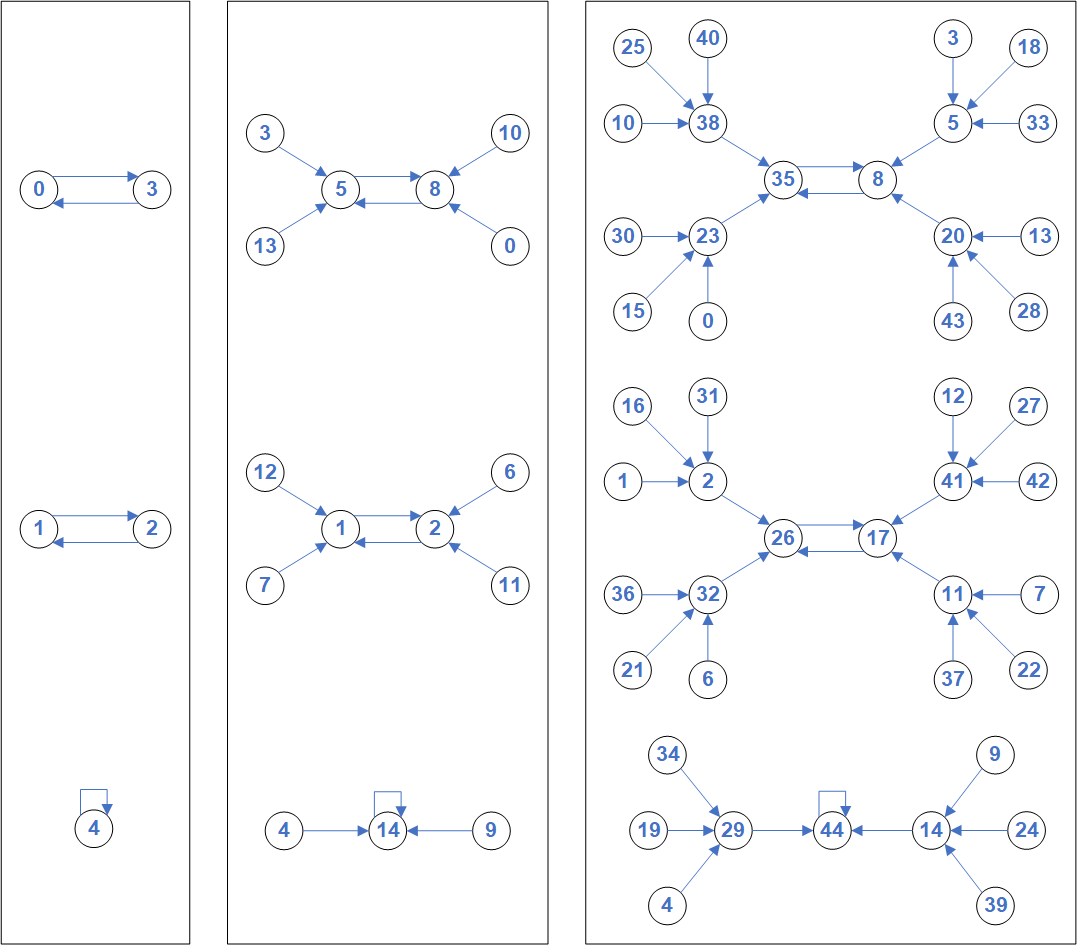}
    \caption{The development of the $ \pi_3 $-cycles $ \qmod{5} $ (left) into acyclic components
        $ \qmod{3 \cdot 5} $ (middle) and $ \qmod{3^2 \cdot 5} $ (right).}
    \label{fig:cyclespi3}
\end{figure}

\subsection{The determinant}

\subsubsection{Graphs from permutation pairs} \label{sec:permutationpairs}
We start with a general result on permutation pairs, which turns out to be the central tool in the proof
of our main theorem.

For a pair of permutations $ \rho_1, \rho_2 $ of a finite set $ V $ we write $ G(\rho_1,\rho_2) $ for the
graph with vertex set $ V $ and edge set
$ E = \{ i \to \rho_1(i) \mid i \in V \} \cup \{ i \to \rho_2(i) \mid i \in V \} $, and we write
$ C(\rho_1,\rho_2) $ for the adjacency matrix of $ G(\rho_1,\rho_2) $. As usual, strongly double edges
(where $ \rho_1(i) = \rho_2(i) $) lead to an entry $ 2 $ in $ C(\rho_1,\rho_2) $.

For a permutation $ \pi $ we write $ \cl(\pi) $ for the number of its cycles, including fixpoints.
We write $ \id $ for the identity permutation.

\begin{theorem} \label{thm:permutations}
    For a pair of permutations $ \rho_1, \rho_2 $ of a finite set $ V $, let
    $ \rho_0 = \rho_2 \rho_1^{-1} $. Then
    \[ \det C(\rho_1,\rho_2) =
    \begin{cases} 0 & \text{ if } \rho_0 \text{ has at least one odd cycle,} \\
        \sign(\rho_1) 2^{\cl(\rho_0)} & \text{ otherwise.} \end{cases} \]
\end{theorem}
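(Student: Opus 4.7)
The plan is to recognize $C(\rho_1,\rho_2)$ as the sum of two permutation matrices, factor out one of them, and reduce the determinant to that of a matrix of the form $I+P_{\rho_0}$, which becomes block-diagonal over the cycles of $\rho_0$. Writing $P_\sigma$ for the matrix defined by $(P_\sigma)_{i,j}=1$ iff $j=\sigma(i)$ and $0$ otherwise, one has $C(\rho_1,\rho_2) = P_{\rho_1}+P_{\rho_2}$: off strongly double positions each permutation matrix contributes an independent $1$, and where $\rho_1(i)=\rho_2(i)$ the two contributions sum to $2$, matching the definition of $C(\rho_1,\rho_2)$.

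In this row convention one checks $P_\sigma P_\tau = P_{\tau\sigma}$, so $P_{\rho_1}^{-1}P_{\rho_2} = P_{\rho_2\rho_1^{-1}} = P_{\rho_0}$. Combined with $\det P_\sigma=\sign(\sigma)$,
\[ \det C(\rho_1,\rho_2) = \det(P_{\rho_1})\,\det\bigl(I+P_{\rho_1}^{-1}P_{\rho_2}\bigr) = \sign(\rho_1)\,\det(I+P_{\rho_0}). \]
Reordering the standard basis so that each $\rho_0$-cycle occupies a consecutive block turns $P_{\rho_0}$ into a block-diagonal matrix whose blocks are the cyclic shift matrices $C_k$ of sizes equal to the cycle lengths, so $I+P_{\rho_0}$ is block-diagonal with blocks $I_k+C_k$, giving
\[ \det C(\rho_1,\rho_2) = \sign(\rho_1)\prod_{\text{cycles of }\rho_0}\det(I_k+C_k). \]

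The eigenvalues of $C_k$ are the $k$-th roots of unity $\zeta_k^j$, so evaluating $\prod_{j=0}^{k-1}(x-\zeta_k^j) = x^k - 1$ at $x=-1$ yields $\det(I_k+C_k) = \prod_j(1+\zeta_k^j) = 1-(-1)^k$, which equals $2$ when $k$ is odd and $0$ when $k$ is even. A $k$-cycle is an odd permutation precisely when $k$ is even, which is exactly the vanishing case: if $\rho_0$ has at least one such ``odd cycle'' then some block is singular and the whole determinant vanishes, while otherwise every block contributes $2$ and one obtains $\sign(\rho_1)\cdot 2^{\cl(\rho_0)}$.

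The main obstacle is really only bookkeeping: one has to fix a single convention for $P_\sigma$ and check that the multiplication identity goes in the correct direction, so that $P_{\rho_1}^{-1}P_{\rho_2}$ produces $\rho_0 = \rho_2\rho_1^{-1}$ rather than $\rho_1^{-1}\rho_2$. With the opposite convention the same argument works after noting that $\rho_1^{-1}\rho_2 = \rho_1^{-1}\rho_0\rho_1$ is conjugate to $\rho_0$ and therefore shares its cycle structure, so the cycle count and the parities of the cycle lengths agree.
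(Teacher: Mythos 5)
Your proposal is correct and follows essentially the same route as the paper: factor out $P_{\rho_1}$ to reduce to $\det(I+P_{\rho_0})$, decompose by the cycles of $\rho_0$, and evaluate each block as $0$ or $2$ according to the parity of the cycle length (the paper phrases the blocks as adjacency matrices of the connected components of $G(\id,\rho_0)$ and quotes $\det M_\ell\in\{0,2\}$ as well known, where you supply the roots-of-unity computation). Your care with the convention $P_\sigma P_\tau=P_{\tau\sigma}$, ensuring that $P_{\rho_1}^{-1}P_{\rho_2}=P_{\rho_2\rho_1^{-1}}$, matches the paper's identity $C(\rho_1\rho,\rho_2\rho)=P(\rho)\cdot C(\rho_1,\rho_2)$ applied with $\rho=\rho_1^{-1}$.
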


Note that if $ \sign(\rho_0) = -1 $, then it has at least one odd cycle. It follows that in the second
case $ \sign(\rho_1) = \sign(\rho_2) $, and $ \cl(\rho_0) = \cl(\rho_0^{-1}) $, so that the expression for
the determinant is indeed symmetric in $ \rho_1, \rho_2 $.

\begin{proof}
    For any permutation $ \rho $ with permutation matrix $ P(\rho) $, the graph
    $ G(\rho_1 \rho,\rho_2\rho) $ has adjacency matrix $
    C(\rho_1\rho,\rho_2\rho) = P(\rho) \cdot C(\rho_1,\rho_2) $. We apply this for
    $ \rho = \rho_1^{-1} $, i.e.\ we look at the graph $ G(\id,\rho_0) $, with the adjacency matrix
    $ C(\id,\rho_0) = P(\rho_1)^{\top} \cdot C(\rho_1,\rho_2) $. Notice that
    $ \det C(\rho_1,\rho_2) = \sign(\rho_1) \det C(\id,\rho_0) $, and that a fixpoint for $ \rho_0 $
    corresponds to a strongly double edge in $ G(\id,\rho_0) $, so to an entry $ 2 $ in $ C(\id,\rho_0) $.

    Since $ \rho_0 $ is a permutation, it is a product of cycles. As $ \id $ consists of loops only, the
    graph $ G(\id,\rho_0) $ consists of connected components corresponding to the cycles, each component
    consisting of the cycle with an additional loop at each vertex. A fixpoint of $ \rho $ simply corresponds
    to a strongly double edge in $ G(\id,\rho_0) $, so to an entry $ 2 $ on the diagonal of
    $ C(\id,\rho_0) $. A cycle of length $ \ell > 1 $, seen as a graph in itself with the points in order of
    the cycle, has as adjacency matrix
    \[ M_{\ell} = \begin{pmatrix} 1 & 1 &        &        &   \\
        & 1 &      1 &        &   \\
        &   & \ddots & \ddots &   \\
        &   &        &      1 & 1 \\
        1 &   &        &        & 1 \end{pmatrix} \]
    and as is well known, $ \det M_{\ell} = 0 $ or $ 2 $ according to $ \ell $ being even or odd, i.e.\
    according to the permutation being odd or even. The case of the fixpoint, corresponding to $ M_1 = (2) $,
    fits in perfectly.

    The result now follows by the well known result that the determinant of the adjacency matrix of a graph
    equals the product of the determinants of the adjacency matrices of its connected components.
\end{proof}

As an example take $ C_{13} = C(\pi_2,\pi_3) $ with
$ \pi_2 = (0)(1\;7\;10\;5\;9\;11\;12\;6\;3\;8\;4\;2) $ with $ \sign(\pi_2) = -1 $, and
$ \pi_3 = (12)(0\;7\;11\;4)(1\;2\;10\;9)(3\;5\;8\;6) $. Then we find
$ \rho_0 = \pi_3 \pi_2^{-1} = (3)(0\;7\;2)(1\;10\;11)(4\;6\;12)(5\;9\;8) $. Clearly there are $ 5 $
even cycles (one of which is a fixpoint) and no odd cycles, so $ \det C_{13} = -32 $, as seen before.

\subsubsection{The main result}
From computational experiments such as shown in Table \ref{tab:determinantsnonzero} one may observe
certain patterns in the value of the determinant of $ C_N $, e.g.\ that all found nonzero determinants
are $ \pm 2^k $ for some odd $ k $, and most of them are small, but there are exceptions showing big
powers of $ 2 $, which calls for an explanation. Originally we found experimentally a certain
correlation between primes $ N $ being of the form $ \dfrac{3^n-1}{2} $ and showing a large power of
$ 2 $ in the determinant of $ C_N $, and that led us to exploring cycles in the permutations, as
shown above. This provided an explanation for the behaviour of the determinant, in which $ \ord_N(3) $
appears to play an important role: the determinant is big when $ \ord_N(3) $ is small, vice versa.

To make all this explicit, we now state the main theorem of this paper, giving a formula for $ \det C_N $
not just for prime $ N $ but for any odd $ N \geq 1 $.

For every odd integer $ d \geq 1 $ not divisible by $ 3 $ we write $ k_d = \ord_d(3) $.

\begin{theorem} \label{thm:main}
    Let $ N $ be an odd positive integer, and let $ N = 3^h N_0 $ where $ h \geq 0 $, and
    $ N_0 $ is an integer not divisible by $ 3 $. Let $ K_N = \dsum_{d \mid N_0} \dfrac{\phi(d)}{k_d} $. Then
    \begin{equation}
        \det C_N = \begin{cases}
            0       & \text{ if } k_p \text{ is even for at least one prime } p \mid N_0, \\
            2^{K_N} & \text{ if } k_p \text{ is odd for all primes } p \mid N_0
            \text{ and } N \equiv \pm 1 \pmod{8}, \\
            -2^{K_N} & \text{ if } k_p \text{ is odd for all primes } p \mid N_0
            \text{ and } N \equiv \pm 3 \pmod{8}.
        \end{cases}
    \end{equation}
\end{theorem}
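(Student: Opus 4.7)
The plan is to reduce the determinant to Theorem \ref{thm:permutations}, or its natural extension when $\pi_3$ fails to be a permutation, by analysing the composition $\rho_0 = \pi_3\pi_2^{-1}$. A direct computation gives $\rho_0(n) \equiv 3n + 2^{-1} \pmod N$, and the substitution $m = n + 4^{-1}$ (valid since $N$ is odd) conjugates $\rho_0$ to the map $m \mapsto 3m$ on $\Z_N$. This is the central technical reduction: the cycle structure of $\rho_0$, and later its functional-graph structure when $3 \mid N$, is read off from the well-understood orbit structure of multiplication by $3$, controlled by the orders $k_d = \ord_d(3)$.

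For the case $3 \nmid N$ (so $h = 0$ and $N_0 = N$), $\pi_3$ is a permutation, $C_N = C(\pi_2, \pi_3)$, and Theorem \ref{thm:permutations} applies directly. Grouping $m \in \Z_N$ by $d = \gcd(m,N)$ and reindexing $e = N/d$, each such $m$ lies in a cycle of length $k_e$, with $\phi(e)/k_e$ cycles per value of $e$, so $\cl(\rho_0) = \sum_{e\mid N}\phi(e)/k_e = K_N$. I would then show that all $k_e$ are odd if and only if $k_p$ is odd for every prime $p \mid N$, using that $k_{p^a}/k_p$ is a power of $p$ (a standard lifting-the-exponent argument) and $k_e = \lcm_{p^a \| e}(k_{p^a})$.

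The main obstacle is the case $3 \mid N$ (so $h \geq 1$), where $\pi_3$ is $3$-to-$1$ and Theorem \ref{thm:permutations} does not apply directly. Writing $C_N = P(\pi_2) + Q(\pi_3)$ with $Q(\pi_3)$ carrying a single $1$ in row $n$ at column $\pi_3(n)$, factoring out $P(\pi_2)$ gives $\det C_N = \sign(\pi_2)\cdot\det(I + Q(\rho_0))$. I would then exploit the functional graph of $\rho_0$, which by Section \ref{sec:graphstructurediv3} consists of cycles with three-to-one trees of depth $h-1$ attached, via a leaf-pruning argument: for any vertex $w$ not in the image of $\rho_0$, column $w$ of $I + Q(\rho_0)$ consists of only the loop entry $(w,w)=1$, so cofactor expansion along that column gives $\det(I + Q(\rho_0)) = \det(I' + Q'(\rho_0|_{V\setminus\{w\}}))$. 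Iterating this strips all trees and leaves the \emph{core} of $\rho_0$, which under the substitution $m = n + 4^{-1}$ is exactly $3^h\Z_N \cong \Z_{N_0}$, with $\rho_0$ restricting to multiplication by $3$ on $\Z_{N_0}$. Theorem \ref{thm:permutations} then delivers $\det(I + Q(\rho_0))$ as $0$ if some cycle length is even, else $2^{K_N}$, with the same sum $K_N = \sum_{e \mid N_0}\phi(e)/k_e$ and the same odd-cycle criterion in terms of $k_p$ for $p \mid N_0$.

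Finally, to pin down the sign I would compute $\sign(\pi_2)$ via Zolotarev's lemma: since $\pi_2$ is multiplication by $2^{-1}$ on $\Z_N$, $\sign(\pi_2) = \left(\tfrac{2}{N}\right)$, which equals $+1$ for $N \equiv \pm 1 \pmod 8$ and $-1$ for $N \equiv \pm 3 \pmod 8$. Multiplying this sign into the count assembles the three cases of Theorem \ref{thm:main}.
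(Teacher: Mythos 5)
Your proposal is correct and follows essentially the same route as the paper: reduce via $\pi_2^{-1}$ to $I$ plus the functional matrix of $\rho_0(n)=3n+2^{-1}$, count its cycles through $\ord_d(3)$ (giving $K_N$ and the odd-cycle criterion via the lifting lemma), apply Theorem \ref{thm:permutations}, strip the three-to-one trees when $3\mid N$ down to the core $\cong \Z_{N_0}$, and obtain the sign from the Lerch--Zolotarev lemma. Your conjugation $m=n+4^{-1}$ and the leaf-pruning cofactor expansion are only cosmetic variants of the paper's cycle equation $N\mid(4n+1)(3^k-1)$ and its block-triangular reordering of vertices by distance to $G_N^{\dagger}$.
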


Special cases are:
\begin{itemize} \setlength{\itemsep}{0pt}
    \item $ N = 1 $, then $ h = 0 $, $ N_0 = 1 $, $ K_N = \dfrac{\phi(1)}{k_1} = 1 $, so $ \det C_1 = 2 $.
    \item $ N = 3^h $ with $ h \geq 1 $, then $ N_0 = 1 $, $ K_N = \dfrac{\phi(1)}{k_1} = 1 $, so
    $ \det C_{3^h} = (-1)^h \cdot 2 $.
    \item $ N > 3 $ an odd prime, then $ h = 0 $, $ N_0 = N $,
    $ K_N = \dfrac{\phi(1)}{k_1} + \dfrac{\phi(N)}{k_N} = 1 + \dfrac{N-1}{\ord_N(3)} $. Note that this
    proves that our initially found experimental formula (\ref{eq:firstformula}) is indeed correct. In
    this prime case, if $ k_N = \ord_N(3) $ is odd, then $ k_N \mid \dfrac{N-1}{2} $, so $ 3 $ is a
    quadratic residue $ \qmod{N} $, implying that $ N \equiv \pm 1 \pmod{12} $. We deduce that if
    $ N \equiv \pm 5 \pmod{12} $ then $ k_N $ is even, so $ \det C_N = 0 $. Note that if
    $ N \equiv -1 \pmod{12} $ then $ \dfrac{N-1}{2} $ is odd so $ k_N $ is odd, so $ \det C_N \neq 0 $.
    However, when $ N \equiv 1 \pmod{12} $, both even and odd $ k_N $ occur.
    \item This theorem allows us to quickly find not too big primes $ N $ with a huge value for $ \det C_N $.
    The idea is to look for large prime factors $ N $ of numbers of the form $ 3^k - 1 $ for small
    $ k $, then $ \ord_N(3) $ is a divisor of $ k $, so is small. For example, the prime \\
    $ N = 3754733257489862401973357979128773 $ is a factor of $ 3^{71} - 1 $, and we quickly find \\
    $ \det C_{3754733257489862401973357979128773} = -2^{52883567006899470450328985621533} $. Here
    $ N $ has $ 34 $ decimal digits, while $ \det C_N $ has $ \approx 1.59 \times 10^{31} $ decimal
    digits. This $ N $ is an example of the form $ N = \dfrac{3^k-1}{2} $ (a \emph{base $ 3 $ repunit
        prime}), where (clearly) also $ k $ is prime, so we always have $ k = \ord_N(3) $ here. Probably
    such primes form an infinite family (analogous to Mersenne primes).
\end{itemize}

In the proof we use, next to Theorem \ref{thm:permutations}, two Lemmas: \ref{lem:zolotarev} and
\ref{lem:order}. The first is Lerch's generalization \cite{L} of Zolotarev's Lemma \cite{Z}.

\begin{lemma}[Lerch, Zolotarev] \label{lem:zolotarev}
    Let $ N > 1 $ be an odd integer, and let $ a \in \Z_N^{\ast} $. Let $ \pi_{a,N} $ be the permutation on
    $ \Z_N $ defined by $ \pi_{a,N}(i) \equiv a \, i \pmod{N} $. Then
    $ \sign(\pi_{a,N}) = \jacobi{a}{N} $, where $ \jacobi{a}{N} $ is the Jacobi symbol.
\end{lemma}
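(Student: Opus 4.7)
The plan is to decompose $\pi_{a,N}$ according to $\gcd$ with $N$, reduce composite factors via the Chinese Remainder Theorem, and handle prime powers via the cyclicity of $\Z_{p^e}^*$.

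First, since $\gcd(a,N) = 1$, the permutation $\pi_{a,N}$ preserves $\gcd(i,N)$, and so stabilizes each subset $S_d = \{ i \in \Z_N : \gcd(i,N) = N/d \}$ for $d \mid N$. The map $\Z_d^* \to S_d$ sending $j \mapsto (N/d) j$ conjugates the restriction of $\pi_{a,N}$ to $S_d$ with the permutation $\pi_{a,d}^* : j \mapsto aj \pmod{d}$ of $\Z_d^*$. Therefore
$ \sign(\pi_{a,N}) = \prod_{d \mid N} \sign(\pi_{a,d}^*) $,
reducing the problem to evaluating the sign of $\pi_{a,d}^*$ for each divisor $d$ of $N$.

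Next, for $d = d_1 d_2$ with $\gcd(d_1, d_2) = 1$, CRT identifies $\Z_d^*$ with $\Z_{d_1}^* \times \Z_{d_2}^*$ in such a way that $\pi_{a,d}^*$ corresponds to the product permutation $\pi_{a,d_1}^* \times \pi_{a,d_2}^*$. Using $\sign(\sigma_1 \times \sigma_2) = \sign(\sigma_1)^{|X_2|} \sign(\sigma_2)^{|X_1|}$ and the fact that $\phi(p^e)$ is even for every odd prime power, I conclude that whenever $d$ has at least two distinct prime factors both exponents are even, so $\sign(\pi_{a,d}^*) = 1$. Combined with the trivial $d = 1$ case, this reduces the task to proving $\sign(\pi_{a,p^e}^*) = \jacobi{a}{p}$ for each prime power $p^e \mid N$.

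For the prime-power case, since $\Z_{p^e}^*$ is cyclic of order $n = \phi(p^e)$, I pick a generator $g$ and write $a \equiv g^t \pmod{p^e}$. Under the isomorphism $g^i \mapsto i$ with $\Z_n$, the permutation $\pi_{a,p^e}^*$ becomes the translation $i \mapsto i + t$, which splits into $\gcd(n,t)$ cycles of length $n/\gcd(n,t)$ and therefore has sign $(-1)^{n - \gcd(n,t)}$. Because $n$ is even this reduces to $(-1)^{\gcd(n,t)}$, and $\gcd(n,t)$ is odd precisely when $t$ is odd, i.e., when $a$ is a non-residue modulo $p^e$; Hensel lifting (valid because $p$ is odd) makes this equivalent to $a$ being a non-residue modulo $p$. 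Hence $\sign(\pi_{a,p^e}^*) = \jacobi{a}{p}$, and stitching everything together gives $\sign(\pi_{a,N}) = \prod_{p \mid N} \jacobi{a}{p}^{v_p(N)} = \jacobi{a}{N}$. The main delicate step is the prime-power computation, but the cyclicity of $\Z_{p^e}^*$ turns it into a transparent parity check on $\gcd(n,t)$ that exactly tracks quadratic residuacity.
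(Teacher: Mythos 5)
The paper does not prove this lemma at all: it is imported verbatim from Lerch \cite{L} and Zolotarev \cite{Z}, so there is no in-paper argument to compare against. Your proof is a correct, self-contained derivation, and it is essentially the classical proof of the Lerch extension. All three stages check out: the orbit decomposition is valid because $\gcd(ai,N)=\gcd(i,N)$ and because $\gcd((N/d)j,N)=(N/d)\gcd(j,d)$, so $j\mapsto (N/d)j$ really does identify $\Z_d^{\ast}$ with $S_d$ equivariantly; the CRT step is correct, with $\sign(\sigma_1\times\sigma_2)=\sign(\sigma_1)^{|X_2|}\sign(\sigma_2)^{|X_1|}$ killing every divisor with two distinct prime factors since $\phi$ of any nontrivial odd number is even; and the prime-power computation is right, since a translation by $t$ on $\Z_n$ has $\gcd(n,t)$ cycles, giving sign $(-1)^{n-\gcd(n,t)}=(-1)^{\gcd(n,t)}$ for even $n$, which is $-1$ exactly when $t$ is odd, i.e.\ when $a$ is a nonresidue. (For that last equivalence you do not even need Hensel: a generator of $\Z_{p^e}^{\ast}$ reduces to a generator of $\Z_p^{\ast}$, so $a\equiv g^t$ is a residue mod $p$ iff $t$ is even, directly.) Counting the $v_p(N)$ prime-power divisors of each $p$ then yields $\prod_p \jacobi{a}{p}^{v_p(N)}=\jacobi{a}{N}$ as required. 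What your approach buys is that the paper's appeal to the literature becomes verifiable on the spot; the only steps a referee might ask you to expand are the product-sign formula and the justification that the restriction of $\pi_{a,N}$ to $S_d$ is conjugate to multiplication on $\Z_d^{\ast}$, both of which you have stated correctly.
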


Note that the permutation is not on $ \Z_N^{\ast} $ but on $ \Z_N $.

\begin{lemma} \label{lem:order}
    Let $ N > 1 $ be an odd integer, and let $ g \in \Z_N^{\ast} $. If $ \ord_p(g) $ is odd for all primes
    $ p \mid N $, then $ \ord_N(g) $ is odd.
\end{lemma}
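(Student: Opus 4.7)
The plan is to reduce to prime power moduli via the Chinese Remainder Theorem and then use a standard lifting argument.

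First I would write $N = \prod_{i=1}^r p_i^{e_i}$, where each $p_i$ is an odd prime (odd since $N$ is odd). By the Chinese Remainder Theorem, $\Z_N^{\ast} \cong \prod_i \Z_{p_i^{e_i}}^{\ast}$, and under this isomorphism the element $g$ corresponds to $(g \bmod p_i^{e_i})_i$. Consequently $\ord_N(g) = \lcm_i \ord_{p_i^{e_i}}(g)$. Since the lcm of odd numbers is odd, it suffices to prove that $\ord_{p^e}(g)$ is odd for each prime power $p^e$ dividing $N$, assuming only that $\ord_p(g)$ is odd.

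Now fix an odd prime $p$ and an integer $e \geq 1$, and set $d = \ord_p(g)$. The key step is to show $\ord_{p^e}(g) \mid d \cdot p^{e-1}$. Writing $g^d = 1 + p a$ for some integer $a$, I would argue by induction on $e$ that $(1+pa)^{p^{e-1}} \equiv 1 \pmod{p^e}$. The inductive step uses the binomial expansion: if $(1+pa)^{p^{e-2}} = 1 + p^{e-1} b$ for some integer $b$, then raising to the $p$-th power gives
\[ (1 + p^{e-1} b)^p = 1 + p \cdot p^{e-1} b + \binom{p}{2} p^{2(e-1)} b^2 + \ldots, \]
and for odd $p$ every term beyond the first two is divisible by $p^e$ (this is where the hypothesis $p$ odd is essential, as the $\binom{p}{2}$ term contributes an extra factor of $p$). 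Thus $g^{d \cdot p^{e-1}} \equiv 1 \pmod{p^e}$, so $\ord_{p^e}(g) \mid d \cdot p^{e-1}$.

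Since $d$ is odd by hypothesis and $p^{e-1}$ is odd (as $p$ is odd), the divisor $d \cdot p^{e-1}$ is odd, hence $\ord_{p^e}(g)$ is odd. Taking the lcm over all prime power factors of $N$ completes the argument. The main obstacle is really just the lifting step $(1+pa)^{p^{e-1}} \equiv 1 \pmod{p^e}$, but this is a standard fact (essentially the reason the group $\Z_{p^e}^{\ast}$ is cyclic for odd $p$), and requires no more than a careful binomial computation.
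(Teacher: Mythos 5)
Your proof is correct and follows essentially the same route as the paper's: reduce to prime powers (the paper uses induction on a coprime factorization where you invoke CRT and the lcm formula directly, which is the same idea), then lift the order from $p$ to $p^e$ via a binomial expansion showing $\ord_{p^e}(g)$ divides $\ord_p(g)\cdot p^{e-1}$, an odd number. The only cosmetic difference is that the paper tracks $\ord_{p^k}(g)$ level by level while you telescope the lifting into one statement; both hinge on the same binomial computation.
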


\begin{proof}
    For $ N = p^k $ where $ p $ is an odd prime and $ k \geq 2 $, we use induction. Let
    $ e_k = \ord_{p_k}(g) $ be odd, which is true for $ k = 1 $. Let $ s_k $ be the integer for which
    $ g^{e_k} = 1 + p^k s_k $. Then
    \[ g^{e_k p} = \left(g^{e_k}\right)^p = (1 + p^k s_k)^p = \sum_{i=0}^{p} \binom{p}{i} p^{i k} s_k^i
    \equiv 1 + p p^k s_k \equiv 1 \pmod{p^{k+1}} , \]
    so $ \ord_{p^{k+1}}(g) \mid e_k p $, so it must be odd. \\
    For the general case, we may assume by induction that $ N = d_1 d_2 $ for odd coprime $ d_1, d_2 $
    for which $ e_1 = \ord_{d_1}(g) $ and $ e_2 = \ord_{d_2}(g) $ both are odd. Then
    $ g^{\lcm(e_1,e_2)} \equiv 1 \pmod{d_1} $ and $ \qmod{d_2} $, so $ g^{\lcm(e_1,e_2)} \equiv 1 \pmod{N} $.
    It follows that $ \ord_N(g) \mid \lcm(e_1,e_2) $, so it must be odd.
\end{proof}

\begin{proof}[Proof of Theorem \ref{thm:main}]
    In the case $ h = 0 $, i.e.\ $ N = N_0 $, we have the permutations $ \pi_2, \pi_3 $ and
    $ \pi_0 = \pi_3 \pi_2^{-1} $ as defined in Definition \ref{def:perm}. Indeed,
    $ G_N = G(\pi_2,\pi_3) $ in the notation of Section \ref{sec:permutationpairs}. We study the cycles of
    $ \pi_0 $. From the definitions of $ \pi_2, \pi_3 $ we have
    $ \pi_0(n) \equiv 3n + \dfrac{N+1}{2} \pmod{N} $. Induction shows that
    \begin{equation} \label{equ:cyclestart}
        \pi_0^k(n) \equiv 3^k n + \dfrac{N+1}{2} \, \dfrac{3^k-1}{2} \pmod{N} ,
    \end{equation}
    so for a cycle of length $ k $ we have
    $ 3^k n + \dfrac{N+1}{2} \, \dfrac{3^k-1}{2} \equiv n \pmod{N} $, in other words
    \begin{equation} \label{equ:cycle}
        N \mid (4n+1) (3^k-1) .
    \end{equation}
    Let $ d = \dfrac{N}{\gcd\left(N,4n+1\right)} $, then $ d \mid (3^k-1) $. This happens for the first time
    for $ k = k_d $, and then indeed $ \pi_0^k(n) \equiv n \pmod{N} $, according to (\ref{equ:cyclestart}).
    So the cycle starting with $ n $ has length $ k = k_d $. In other words, the occuring cycle lengths are
    exacty the possible orders $ k_d = \ord_d(3) $ for the divisors $ d $ of $ N $. It is a well known result
    in elementary number theory that the number of cycles of length $ k_d $ equals $ \dfrac{\phi(d)}{k_d} $
    (and that indeed $ \dsum_{d\mid N} k_d \dfrac{\phi(d)}{k_d} = N $). So the total number of
    cycles $ \cl(\pi_0) $ equals $ K_N $.

    By Lemma \ref{lem:order}, if $ k_p $ is odd for any prime divisor $ p $ of $ N $, then also $ k_d $ is
    odd for any divisor of $ N $. So odd cycles (i.e.\ cycles with even length) occur exactly when there is a
    divisor $ d $ of $ N $ with $ k_d $ even. In this case Theorem \ref{thm:permutations} implies
    $ \det C_N = 0 $. And if $ k_d $ is odd for all divisors $ d $ of $ N $ then all cycles of $ \pi_0 $ must
    be odd as well, and the total number of cycles then is $ \cl(\pi_0) = K_N $.

    It remains to compute $ \sign(\pi_2) $. This follows immediately from Lerch-Zolotarev's Lemma
    \ref{lem:zolotarev} for $ a \equiv 2^{-1} \pmod{N} $, together with the well known fact that
    $ \jacobi{2^{-1}}{N} = \jacobi{2}{N} = 1 $ if $ N \equiv \pm 1 \pmod{8} $, and $ -1 $ if
    $ N \equiv \pm 3 \pmod{8} $. This concludes the case $ h = 0 $.

    In the case $ h \geq 1 $ we saw in Section \ref{sec:graphstructurediv3} that the graph $ G_N $ is more
    complicated, with a three-to-one tree on every vertex of the $ \pi_3 $-cycles of $ G_{N_0} $, while
    $ \pi_2 $ given by $ n \to \dfrac{n}{2} \pmod{N} $ still is a permutation. So we can define the graph
    $ G_N^{\ast} $ by its adjacency matrix $ C_N^{\ast} = P\left(\pi_2\right)^{\top} \cdot C_N $. In
    $ G_N^{\ast} $ we look at the rule $ n \to 3n + \dfrac{N+1}{2} \pmod{N} $, just like we had in the proof
    of Theorem \ref{thm:main}. But this time we also have that
    $ n + \dfrac{1}{3}N \to 3n + \dfrac{N+1}{2} \pmod{N} $ and
    $ n + \dfrac{2}{3}N \to 3n + \dfrac{N+1}{2} \pmod{N} $. With the exception of the vertex coming from the
    double edge, every vertex has outdegree $ 1 $, but since
    $ 3n + \dfrac{N+1}{2} \equiv 2^{-1} \equiv 2 \pmod{3} $, every $ n \equiv 2 \pmod{3} $ has indegree
    $ 3 $, and every $ n \equiv 0, 1 \pmod{3} $ has indegree $ 0 $.

    As above we do have the `cycle equation' (\ref{equ:cycle}) for $ G_N^{\ast} $ for those vertices that are
    in a cycle. It immediately implies $ 3^h \mid 2n + 2^{-1} $, so $ n \equiv -4^{-1} \pmod{3^h} $. There
    are precisely $ N_0 $ of those $ n $, and the subgraph of $ G_N^{\ast} $ that they form, denoted by
    $ G_N^{\dagger} $, is isomorphic to the graph $ G(\id,\pi_0) $ for the $ \pi_0 $ of modulus $ N_0 $, as
    the relation (\ref{equ:cycle}) for $ N $ implies the same relation for $ N_0 $. So this part of the graph
    is understood just like in the case $ h = 0 $. In particular, it consists of a number of cycles.

    But as the indegree of each vertex in $ G_N^{\dagger} $ is $ 3 $, with for each vertex one edge coming
    from $ G_N^{\dagger} $, there must be two other incoming edges from outside $ G_N^{\dagger} $. Let
    $ n \equiv -4^{-1} \pmod{3^h} $ be in $ G_N^{\dagger} $, with its predecessor in $ G_N^{\dagger} $ being
    $ n_0 $. The other two predecessors then are
    $ n_0' = n_0 + \dfrac{1}{3} N, n_0'' = n_0 + \dfrac{2}{3} N \pmod{N} $. We have
    $ 3 n_0 + \dfrac{N+1}{2} \equiv n \pmod{N} $, so $ 3 n_0 + 2^{-1} \equiv -4^{-1} \pmod{3^h} $, so
    $ 3 n_0 \equiv -4^{-1} - 2^{-1} \equiv -3 \cdot 4^{-1} \pmod{3^h} $, and we find
    $ n_0 \equiv -4^{-1} \pmod{3^{h-1}} $. Clearly also
    $ n_0' \equiv -4^{-1} \pmod{3^{h-1}} $, $ n_0'' \equiv -4^{-1} \pmod{3^{h-1}} $. If $ h > 1 $, we now can
    repeat the argument: for each of the two $ n_0', n_0'' $ there are three predecessors equivalent to
    $ -4^{-1} \pmod{3^{h-2}} $, and we can repeat it $ h $ times, until we have reached the stage where none
    of the three is $ 2 \pmod{3} $, which happens at the $ h $'th stage.

    \begin{figure}[ht]
        \centering
        \includegraphics[width=0.6\textwidth]{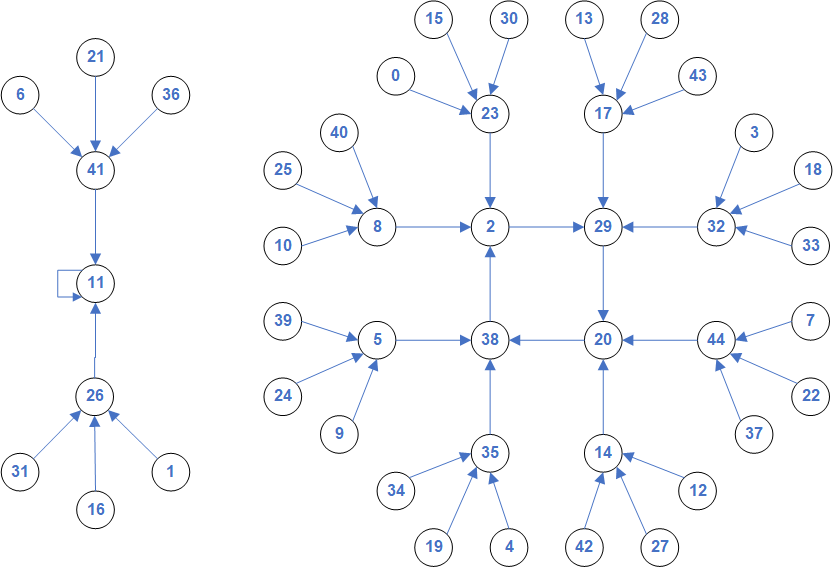}
        \caption{The graph $ G_{45}^{\ast} $, i.e.\ $ h = 2, N_0 = 5 $, without loops. Note that
            $ -4^{-1} \equiv 2 \pmod{3^2} $, and the edge rule is $ n \to 23 + 3 n \pmod{45} $.}
        \label{fig:graph45}
    \end{figure}

    So at every vertex in $ G_N^{\dagger} $ we have a three-to-one tree of depth $ h $, with the exception of
    the root vertices of the trees (those in $ G_N^{\dagger} $), where it's two-to-one. There are $ N_0 $
    vertices in $ G_N^{\dagger} $, $ 2 N_0 $ vertices at distance $ 1 $ from the cycles, and in general
    $ 2 \cdot 3^{i-1} N_0 $ vertices at distance $ i $ from the cycles, for $ i = 1, 2, \ldots, h $, and at
    distance $ h $ from the cycles there are exactly the $ 2 \cdot 3^{h-1} N_0 $ vertices that are
    $ \not\equiv 2 \pmod{3} $. Clearly all these vertex counts add up to $ N = 3^h N_0 $, so that we have
    seen the entire graph $ G_N^{\ast} $ (up to the loops). See Figure \ref{fig:graph45} for an example, and
    compare this to Figure \ref{fig:cyclespi3} right, which shows $ G_{45} $.

    Now that we know the structure of the graph $ G_N^{\ast} $, we can permute the vertices according to
    their distance to $ G_N^{\dagger} $, first all those in $ G_N^{\dagger} $, then those at distance $ 1 $,
    then those at distance $ 2 $, etc. In this way, in the adjacency matrix of $ G_N^{\ast} $ we get at the
    top left corner the adjacency matrix of $ G_N^{\dagger} $ as an $ N_0 \times N_0 $ block, with to the
    right of it only $ 0 $s; and below that, at each row its diagonal element due to the loop, and because
    a vertex at distance $ i $ has an edge to a vertex at distance $ i - 1 $, the only other $ 1 $'s in this
    part of the matrix are left/below the diagonal. See Figure \ref{fig:matrix45} for an example.

    \begin{figure}[ht]
        \centering
        \includegraphics[width=0.4\textwidth]{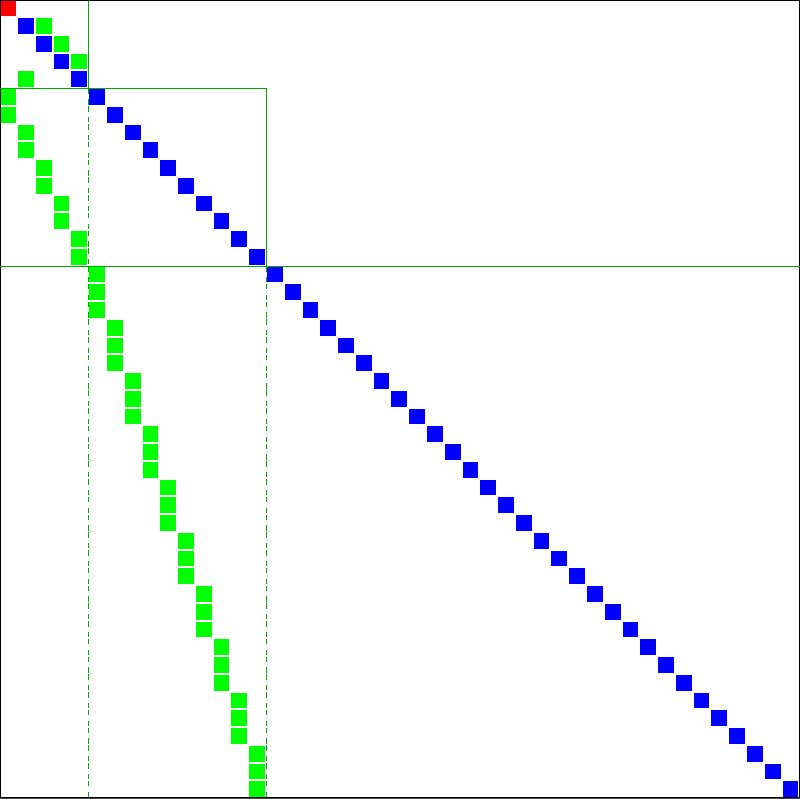}
        \caption{The matrix $ C_{45}^{\ast} $, with a suitable reordering of vertices according to distance
            to $ G_{45}^{\dagger} $ (which consists of the vertices $ 2, 11, 20, 29, 38 $).}
        \label{fig:matrix45}
    \end{figure}

    This implies that the bottom $ 2 \cdot 3^{h-1} N_0 $ rows give a lower triangular block with $ 1 $'s on
    the diagonal, which contributes a factor $ 1 $ to the determinant, so that indeed
    $ \det C_N = \pm \det C_{N_0} $, to be precise: $ \det C_N = (-1)^h \det C_{N_0} $. This concludes the
    proof.
\end{proof}

\subsection{Another proof}
Here's another idea for the case of $ N $ being odd and not divisible by $ 3 $. Permuting the rows of the
matrix $ C_N $ as done above, i.e.\ by $ \pi_2^{-1} $, we get the matrix $ \bl{I_N} + \gr{X_N} $, where
$ \bl{I_N} $ is the $ N \times N $ identity matrix, and $ \gr{X_N} $ is the permutation matrix
corresponding to $ \pi_0 = \pi_3 \pi_2^{-1} $. This permutation corresponds to the rule
$ \pi_0: n \to 3 n + \dfrac{N+1}{2} \pmod{N} $, and the matrices look like in Figure
\ref{fig:permutation}, with $ \Pi_2^{-1} $ being the permutation matrix for the permutation
$ \pi_2^{-1} $.

\begin{figure}[ht] \centering
    $ \vcenter{\hbox{\includegraphics[width=0.25\textwidth]{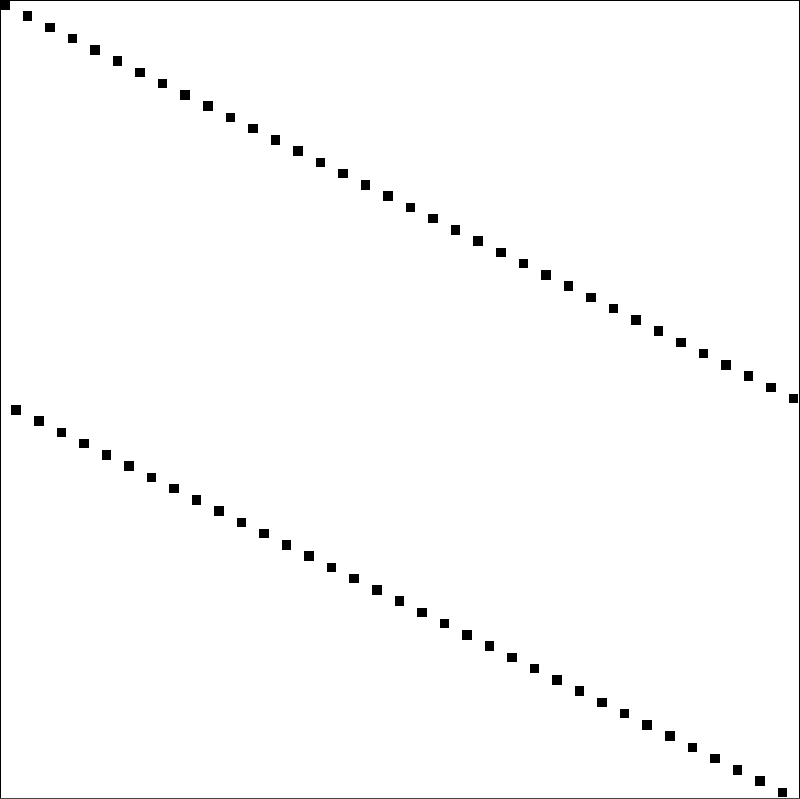}}} \boldsymbol{\cdot}
    \vcenter{\hbox{\includegraphics[width=0.25\textwidth]{images/matrix71.png}}} =
    \vcenter{\hbox{\includegraphics[width=0.25\textwidth]{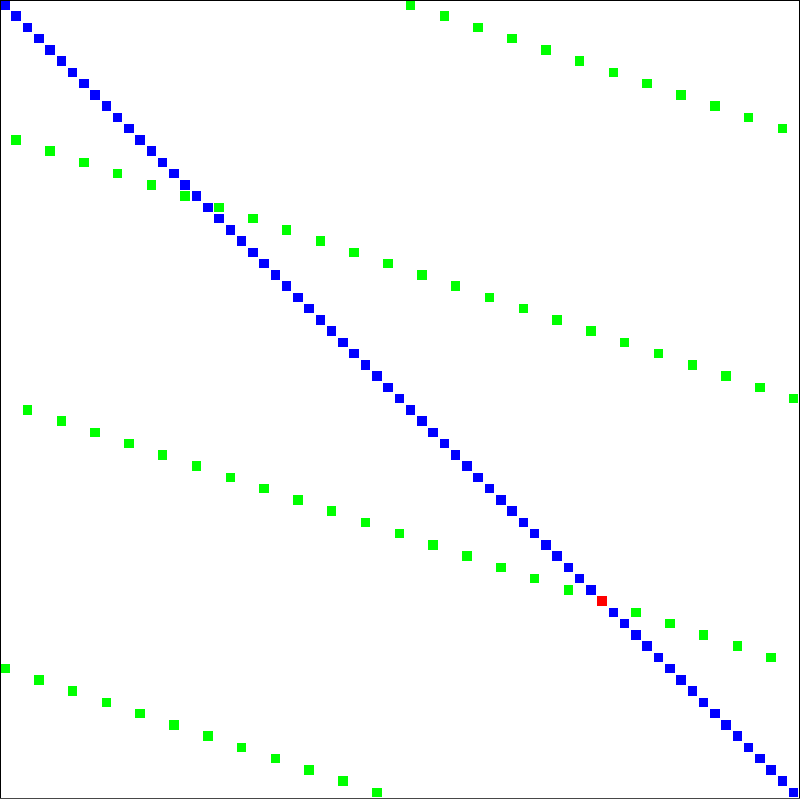}}} $
    \caption{$ \Pi_2^{-1} \cdot C_{71} = \bl{I_N} + \gr{X_N} $.} \label{fig:permutation}
\end{figure}

The idea now is that the spectra of matrices $ X $ and $ I + X $ are linked. Indeed, $ v \neq 0 $ is an
eigenvector of $ X $ for the eigenvalue $ \mu $ if and only if $ X v = \mu v $, if and only if
$ (I + X) v = (1 + \mu) v $, if and only if $ v $ is an eigenvector of $ I + X $ for the eigenvalue
$ 1 + \mu $.

The good thing now is that $ X_N $ is also a permutation matrix. This means that its spectrum consists
of roots of unity only, and from the cycle structure of its underlying permutation $ \pi_0 $ it's easy to
determine which roots of unity. We basically did this in the proof above, showing that there are
$ \dfrac{\phi(d)}{k_d} $ cycles of length $ k_d $ for any divisor $ d $ of $ N $. And for permutation
matrices it's well known that a cycle of length $ k_d $ corresponds exactly to an eigenvalue $ \mu $
being a $ k_d $'th root of unity (not necessarily primitive), and each of the $ k_d $ conjugates occurs.

So the spectrum of $ \Pi_2^{-1} \cdot C_N $ consists of all possible $ \lambda = 1 + \mu $, where $ \mu $
runs through the $ k_d $'th roots of unity for each of the $ \dfrac{\phi(d)}{k_d} $ cycles of length
$ k_d $ for all divisors $ d $ of $ N $. Concentrating at one such cycle, $ \mu $ runs through
$ e^{2\pi \ii i/k_d} $ for $ i = 0, 1, \ldots, k_d-1 $ (to avoid confusion we use $ \ii $ for a formal
solution of $ \ii^2 = -1 $), so $ \lambda $ runs through
$ 1 + e^{2\pi \ii i/k_d} $ for $ i = 0, 1, \ldots, k_d-1 $. It is well known that the product of those
$ \lambda $'s always equals $ 2 $ if $ k_d $ is odd, and $ 0 $ if $ k_d $ is even\footnote{Here is a
    simple proof: if $ \zeta $ is an $ n $'th root of unity and $ \eta = 1 + \zeta $ then $ \eta $ is a root
    of $ (x-1)^n - 1 $. The product of all those roots is $ (-1)^n \times $ the constant term, and the
    constant term clearly is $ (-1)^n - 1 $, which equals $ 0 $ if $ n $ is even and $ -2 $ if $ n $ is
    odd.}. So the product of all $ \lambda $'s equals zero whenever there is an even $ k_d $, and equals
$ 2^{K_N} $ otherwise, where $ K_N $ is the number of cycles. This shows that
$ \det \Pi_2^{-1} \cdot C_N $ equals $ 0 $ if there is an even $ k_d $, and $ 2^{K_N} $ otherwise.
Finally we note that $ \det \Pi_2^{-1} $ has been computed in the proof above using Lerch-Zolotarev's
Lemma. This concludes the proof.

Note that actually this is an alternative proof to Theorem \ref{thm:permutations}. Note however that the
eigenvalues and characteristic polynomial of $ C_N $ are not found this way, as those are not invariant
under multiplication by a permutation matrix.

\subsection{Generalizations of the Collatz function}
In this section we take a brief look at the general situation of the $ p n + q $-graph. Let $ p \geq 3 $
and $ q $ be odd integers (not necessarily coprime, and $ q $ may be negative). We define the generalized
Collatz function by
\begin{equation}
    T_{p,q}(n) = \begin{cases}
        \dfrac{n}{2} & \text{if } n \text{ is even,} \\[3mm]
        \dfrac{pn+q}{2} & \text{if } n \text{ is odd,}
    \end{cases}
\end{equation}
and the generalised modular Collatz graphs $ G_{p,q,N} $ and their adjacency matrices $ C_{p,q,N} $
accordingly. For convenience we take an odd modulus $ N $ coprime to both $ p $ and $ q $. Let's define
the corresponding permutations similat to Definition \ref{def:perm}.

\begin{definition} \label{def:permpq}
    The permutations $ \pi_2, \pi_{p,q} : \Z_N \to \Z_N $ are defined by
    \[ \pi_2(n) \equiv 2^{-1} n \pmod{N} , \quad \pi_{p,q}(n) \equiv 2^{-1} (p n + q) \pmod{N} . \]
\end{definition}

And we also introduce $ \pi_{0,p,q} = \pi_{p,q} \pi_2^{-1} $. As shown above, the behaviour of the
determinant is determined by $ \pi_{0,p,q} $. Its formula is
$ \pi_{0,p,q} \to p n + \dfrac{N+q}{2} \pmod{N} $. Notice that because $ \gcd(N,q) = 1 $ we have a
permutation $ \rho_q : n \to q n \pmod{N} $, and we trivially have
$ \pi_{0,p,q} = \rho_q \, \pi_{0,p,q} \, \rho_q^{-1} $. In other words, all permutations
$ \pi_{0,p,q} $ are conjugates of $ \pi_{0,p,1} $, and so conjugates of each other. This shows that the
value of $ q $ is irrelevant for the determinant (and even for the characteristic polynomial). Here we
really used that $ q $ and $ N $ are coprime. We leave it to the reader to investigate the case of $ q $
and $ N $ not being coprime.

So from now on we assume $ q = 1 $. It looks like the proof of Theorem \ref{thm:main} can be copied
immediately with replacing $ 3 $ by $ p $, but there is a subtle point here. It may happen that
$ N \mid p - 1 $, in other words, that $ p \equiv 1 \pmod{N} $. In that case
$ \pi_{0,p,1}(n) \equiv n + \dfrac{N+1}{2} \pmod{N} $ is a simple rotation by $ 1 $, and there is only
one cycle, namely $ \pi_{0,p,1}(n) $ itself. So $ \det C_{p,1,N} = \pm 2 $, while $ k_d $ = 1 for every
divisor $ d $ of $ N $, and $ K_N = N $ follows. So then our formula does not hold. But if
$ p \not\equiv 1 \pmod{N} $ the proof goes through, mutatis mutandis. We have the following result.

For every odd integer $ d \geq 1 $ coprime to $ p $ we write $ k_{p,d} = \ord_d(p) $.

\begin{theorem} \label{thm:mainpn+q}
    Let $ p \geq 3 $ and $ q $ be odd integers, and let $ N $ be an odd positive integer coprime to $ p $ and
    $ q $. Let $ K_{p,N} = \dsum_{d \mid N} \dfrac{\phi(d)}{k_{p,d}} $ when $ p \not\equiv 1 \pmod{N} $, and
    $ K_{p,N} = 1 $ when $ p \equiv 1 \pmod{N} $. Then
    \begin{equation}
        \det C_{p,q,N} = \begin{cases}
            \phantom{-}0       & \text{ if } k_{p,r} \text{ is even for at least one prime } r \mid N, \\
            \phantom{-}2^{K_{p,N}} & \text{ if } k_{p,r} \text{ is odd for all primes } r \mid N
            \text{ and } N \equiv \pm 1 \pmod{8}, \\
            -2^{K_{p,N}} & \text{ if } k_{p,r} \text{ is odd for all primes } r \mid N
            \text{ and } N \equiv \pm 3 \pmod{8}.
        \end{cases}
    \end{equation}
\end{theorem}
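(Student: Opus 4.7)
The plan is to mirror the proof of Theorem \ref{thm:main} (the $h = 0$ case), following the authors' suggestion that the argument works mutatis mutandis with $3$ replaced by $p$. First I would reduce to $q = 1$. A direct computation using Definition \ref{def:permpq} yields $\pi_{0,p,q}(n) \equiv p n + 2^{-1} q \pmod{N}$, so with $\rho_q(n) \equiv q n \pmod{N}$ (a permutation since $\gcd(q, N) = 1$) one checks $\pi_{0,p,q} = \rho_q \, \pi_{0,p,1} \, \rho_q^{-1}$. Conjugation preserves cycle type, so by Theorem \ref{thm:permutations} the determinant $\det C_{p,q,N}$ is independent of $q$ (as long as $\gcd(q, N) = 1$), and I may take $q = 1$ from here on.

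Second, I would count the cycles of $\pi_{0,p,1}(n) = p n + \frac{N+1}{2} \pmod{N}$. Induction gives
\[
\pi_{0,p,1}^k(n) \equiv p^k n + \frac{N+1}{2} \cdot \frac{p^k-1}{p-1} \pmod{N} ,
\]
and clearing the fraction (the analogue of the ``multiply by $4$'' step for $p = 3$) produces the cycle-through-$n$ condition $(p^k - 1)[2(p-1) n + 1] \equiv 0 \pmod{N}$. Setting $d = N / \gcd(N, 2(p-1) n + 1)$, the cycle length through $n$ becomes $k_{p,d} = \ord_d(p)$, and the usual divisor-summation then gives $K_{p,N} = \sum_{d \mid N} \phi(d)/k_{p,d}$ total cycles; in the exceptional case $p \equiv 1 \pmod{N}$ the map collapses to a translation by the unit $\frac{N+1}{2}$ (using $\gcd(N, (N+1)/2) = 1$ for $N$ odd), giving a single $N$-cycle and $K_{p,N} = 1$ as stipulated. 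Lemma \ref{lem:order} guarantees that all $k_{p,d}$ are odd precisely when $k_{p,r}$ is odd for every prime $r \mid N$, and Lerch-Zolotarev's Lemma \ref{lem:zolotarev} computes $\sign(\pi_2) = \jacobi{2^{-1}}{N} = \jacobi{2}{N}$, which is $+1$ for $N \equiv \pm 1 \pmod{8}$ and $-1$ for $N \equiv \pm 3 \pmod{8}$. Combining these via Theorem \ref{thm:permutations} yields the claimed trichotomy.

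The most delicate step is the clearing-of-the-fraction itself: multiplication by $2(p-1)$ is invertible $\pmod{N}$ only when $\gcd(2(p-1), N) = 1$, and the divisor count requires that $2(p-1) n + 1$ runs through every residue class as $n$ varies over $\Z_N$. Both conditions reduce to $\gcd(p-1, N) = 1$, which is automatic for $p = 3$ because $p - 1 = 2$ and $N$ is odd, but for general $p \geq 5$ with $p \not\equiv 1 \pmod{N}$ the regime $\gcd(p-1, N) > 1$ is genuinely more subtle: there the true cycle condition is $d \mid (p^k-1)/(p-1)$, whose smallest solution is $\ord_{(p-1)d}(p)$ rather than $\ord_d(p)$, and the multiset of $d$-values realised by $n \in \Z_N$ is no longer uniform over divisors of $N$. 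Verifying the formula in that regime — or identifying the precise hypothesis under which it holds — is the step I would spend most effort on.
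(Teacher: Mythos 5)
Your proposal follows exactly the route the paper intends: its own proof of Theorem \ref{thm:mainpn+q} is literally ``completely similar to the proofs of Theorem \ref{thm:main}'', i.e.\ reduce to $q=1$ by conjugation with $\rho_q$, count the cycles of $\pi_{0,p,1}$, and combine Theorem \ref{thm:permutations} with Lemma \ref{lem:order} and Lemma \ref{lem:zolotarev}. The conjugation step and the sign computation are fine.

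The issue you flag in your final paragraph, however, is not merely a delicate step to be checked: it is a genuine counterexample to the theorem as stated, and hence a gap that no proof can close without an extra hypothesis. The honest cycle condition is $N \mid \dfrac{p^k-1}{p-1}\,\bigl(2(p-1)n+1\bigr)$, and with $d = N/\gcd\bigl(N,2(p-1)n+1\bigr)$ the least such $k$ is $\ord_{d(p-1)}(p)$, which equals $k_{p,d}=\ord_d(p)$ only when $\gcd(d,p-1)=1$. Take $p=7$, $q=1$, $N=9$: then $\gcd(9,7)=\gcd(9,1)=1$, $7\not\equiv 1\pmod 9$, $k_{7,3}=1$ and $k_{7,9}=3$ are odd, $9\equiv 1\pmod 8$, and $K_{7,9}=1+2+2=5$, so the theorem predicts $\det C_{7,1,9}=2^5$. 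But $\pi_{0,7,1}(n)\equiv 7n+5\pmod 9$ is the single $9$-cycle $(0\;5\;4\;6\;2\;1\;3\;8\;7)$ --- indeed $9\mid\frac{7^k-1}{6}$ first occurs at $k=\ord_{54}(7)=9$, not at $\ord_9(7)=3$ --- so Theorem \ref{thm:permutations} gives $\det C_{7,1,9}=\jacobi{2}{9}\cdot 2^1=2$. Your argument (and the theorem) are correct under the additional hypothesis $\gcd(p-1,N)=1$, which holds automatically for $p=3$ since $N$ is odd, and the only other valid regime is the extreme case $N\mid p-1$ that the paper patches by hand with $K_{p,N}=1$; in the intermediate regime $1<\gcd(p-1,N)<N$ both the cycle lengths and the multiset of values of $d$ must be recomputed, exactly as you suspected.
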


\begin{proof}
    Completely similar to the proofs of Theorem \ref{thm:main}.
\end{proof}

As said, we leave it to the reader to find out about the cases where $ N $ is not coprime to $ p $ or
$ q $.

Looking at the case of $ 3 n - 1 $, we see that the modular graphs $ G_{3,-1,N} $ (for $ 3 \nmid N $)
are isomorphic to $ G_N $. So Lemma \ref{lem:connected} also holds for the $ 3 n - 1 $-case.
This is remarkable, given the fact that the full $ 3 n - 1 $-graph is known to have at least three
connected components. It leads us to an obvious generalized result.

\begin{lemma} \label{lem:connectedpn+q}
    All modular $ p n + q $-graphs (for $ p \geq 3 $ and $ q $ both odd and coprime to an odd $ N \geq 3 $)
    are strongly connected.
\end{lemma}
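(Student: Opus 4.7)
The plan is to mimic the proof of Lemma \ref{lem:connected}, replacing $ \pi_3 $ by $ \pi_{p,q} $ throughout. A path $ n_0 \to n_1 \to \ldots \to n_k $ in $ G_{p,q,N} $ again corresponds to a product of elements from $ \{ \pi_2, \pi_{p,q} \} $. Since both maps are genuine permutations of the finite set $ \Z_N $ (here we use that $ N $ is odd and coprime to $ p $, so that $ 2 $ and $ p $ are invertible $ \qmod{N} $), their inverses lie in the group $ \langle \pi_2, \pi_{p,q} \rangle $ as well. Thus it suffices to exhibit every translation $ \tau_\alpha : n \mapsto n + \alpha \pmod{N} $ as an element of this group: then a path from any $ n_0 $ to any $ n_k $ is realised by the word representing $ \tau_{n_k - n_0} $.

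The key step will be to evaluate the commutator-like element $ \tau = \pi_{p,q} \, \pi_2^{-1} \, \pi_{p,q}^{-1} \, \pi_2 $. A direct computation, using $ \pi_{p,q}^{-1}(n) \equiv p^{-1}(2n - q) \pmod{N} $ (which is where we need $ \gcd(p,N) = 1 $), yields
\[ n \stackrel{\pi_2}{\longmapsto} 2^{-1} n \stackrel{\pi_{p,q}^{-1}}{\longmapsto} p^{-1}(n - q)
\stackrel{\pi_2^{-1}}{\longmapsto} 2 p^{-1}(n - q) \stackrel{\pi_{p,q}}{\longmapsto} n - 2^{-1} q \pmod{N}, \]
so $ \tau = \tau_\beta $ with $ \beta \equiv -2^{-1} q \pmod{N} $. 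This specialises to the $ p = 3, q = 1 $ case already treated in Lemma \ref{lem:connected}.

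Since $ \gcd(q,N) = 1 $ and $ N $ is odd, the translation amount $ \beta = -2^{-1} q $ is a unit in $ \Z_N $. Hence $ \tau_\alpha = \tau_\beta^{\,\alpha \beta^{-1}} \in \langle \pi_2, \pi_{p,q} \rangle $ for every $ \alpha \in \Z_N $, completing the proof.

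The main potential obstacle is essentially cosmetic: one has to verify that the four-fold composition really collapses to an affine map with zero multiplicative part (a genuine translation), and that the resulting shift is a unit $ \qmod{N} $. The latter is exactly where the coprimality hypothesis on $ q $ becomes essential; without it, $ \langle \tau \rangle $ would generate only a proper subgroup of translations and the argument would fail, which is consistent with the fact that Theorem \ref{thm:mainpn+q} also leaves the non-coprime cases open.
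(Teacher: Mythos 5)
Your proof is correct and follows exactly the paper's route: the paper's own proof simply says "completely similar to the proof of Lemma \ref{lem:connected}, with $\tau = \pi_{p,q}\,\pi_2^{-1}\,\pi_{p,q}^{-1}\,\pi_2 = \tau_{\beta}$ where $\beta = -2^{-1}q$," which is precisely the commutator you compute. Your additional observation that $\gcd(q,N)=1$ is what makes $\beta$ a unit (so that $\tau_{\beta}$ generates all translations) is a correct and worthwhile elaboration of a detail the paper leaves implicit.
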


\begin{proof}
    Completely similar to the proof of Lemma \ref{lem:connected}, with
    $ \tau = \pi_{p,q} \, \pi_2^{-1} \, \pi_{p,q}^{-1} \, \pi_2 = \tau_{\beta} $ where $ \beta = - 2^{-1} q $.
\end{proof}

\section{Conway's amusical permutation}
In this section we will extend our study to Conway's amusical permutation \cite{C}. This case is of interest
because in the study of its modular graphs we have to deal with three permutations rather than two. We begin
by defining the Conway amusical function as follows.

\begin{definition}
    The Conway amusical function $ U : \N \to \N $ is given by
    \begin{equation}
        U(n) = \begin{cases}
            \dfrac{3n}{2}   & \text{if } n \text{ is even}, \\[8pt]
            \dfrac{3n+1}{4} & \text{if } n \equiv  1 \pmod{4}, \\[8pt]
            \dfrac{3n-1}{4} & \text{if } n \equiv -1 \pmod{4}.
        \end{cases}
    \end{equation}
\end{definition}

It is a permutation, as can be seen from presenting its inverse:
\[ U^{-1}(n) = \begin{cases}
    \dfrac{2n}{3}   & \text{if } n \equiv 0 \pmod{3} \\[8pt]
    \dfrac{4n-1}{3} & \text{if } n \equiv 1 \pmod{3}, \\[8pt]
    \dfrac{4n+1}{3} & \text{if } n \equiv -1 \pmod{3} .
\end{cases} \]
Note that $ U $ can be extended to $ \Z $ by setting $ U(-n) = - U(n) $, and without loss of generality
we restrict $ U $ to the positive integers. All components of the graph are either cycles or infinite
paths (infinite to both sides), in other words, every vertex has indegree $ 1 $ and outdegree $ 1 $. There
are four known cycles: $ (1) $, $ (2, 3) $, $ (4, 6, 9, 7, 5) $,
$ (44, 66, 99, 74, 111, 83, 62, 93, 70, 105, 79, 59) $, probably no others, and probably infinitely many
infinite paths.

Let $ N \in \N $. The modular Conway amusical graph with modulus $ N $ is the directed graph on the
vertex set $ \Z_N $ with an edge from $ a $ to $ b $ if and only if there exists an $ n \in \N $ such
that $ n \equiv a \pmod{N} $ and $ U(n) \equiv b \pmod{N} $. In a case where there exist $ n_1, n_2 $
with $ n_1 \equiv n_2 \pmod{N} $ and $ U(n_1) \equiv U(n_2) \pmod{N} $ but $ n_1 \not\equiv n_2 \pmod{N} $
and $ n_1, n_2 $ have different parity so that different rules are applied, the graph has a double edge. In
this note we are interested in the determinant of the adjacency matrix $ M_N $ of this graph, where a double
edge is represented by an entry $ 2 $. We label rows and columns as $ 0, 1, \ldots, N-1 $.

As an example we have
\[ M_7 = \begin{pmatrix} \bl{1} & . & \gr{1} & . & . & \rd{1} & . \\
    . & \gr{1} & . & . & \rd{1} & \bl{1} & . \\
    \gr{1} & . & . & \bl{1}\!+\!\rd{1} & . & . & . \\
    . & \bl{1} & \rd{1} & . & . & . & \gr{1} \\
    . & \rd{1} & . & . & . & \gr{1} & \bl{1} \\
    \rd{1} & . & . & . & \bl{1}\!+\!\gr{1} & . & . \\
    . & . & \bl{1} & \gr{1} & . & . & \rd{1} \end{pmatrix} ,
\begin{array}{l@{}l@{}l}
    \multicolumn{3}{l}{\small\text{color legend:}} \\
    \small\text{\bl{blue 1}: }  & \small\text{$ n \to \dfrac{3n}{2}$}   & \small\text{ for even $ n $} \\[8pt]
    \small\text{\gr{green 1}: } & \small\text{$ n \to \dfrac{3n+1}{4}$} & \small\text{ for $ n \equiv 1 \pmod{4} $}\\[8pt]
    \small\text{\rd{red 1}: }   & \small\text{$ n \to \dfrac{3n-1}{4}$} & \small\text{ for $ n \equiv -1 \pmod{4} $}.
\end{array} \]

Before we formulate the main theorem of this chapter we deal with some easy cases. For even $ N $ we
have, as in the Collatz case, that the left half of $ M_N $ equals the right half, so then
$ \det M_N = 0 $. For $ N $ divisible by $ 3 $ the top third, middle third and bottom third of $ M_N $
are all equal, so then also $ \det M_N = 0 $. So we may assume that $ N $ is coprime to $ 6 $.

Note that the graph has a symmetry coming from the fact that $ U(-n) = - U(n) $, this implies that
whenever $ n \to m $ is an edge, so is $ N - n \to N - m $. For odd $ N $ this means that if we
remove the $ 0 $'th row and column (remember that we number the rows and columns from $ 0 $ to $ N - 1 $)
then the resulting submatrix is invariant under rotation by 180 degrees, i.e.\ this submatrix is
invariant under conjugation by the antidiagonal unit matrix (a.k.a.\ the exchange matrix). (For even
$ N $ one also has to remove the $ N/2 $'th row and column in order to obtain the same effect). We do not
see how to use these remarkable facts.

For every odd integer $ d > 1 $ we write (contrary to the previous chapter) $ k_d = \ord_d(2) $.
The following theorem is analogous to Theorem \ref{thm:main}.

\begin{theorem} \label{thm:mainConway}
    Let $ N \in \N $ be an odd positive integer $ > 1 $ and not divisible by $ 3 $. Let
    $ K_N = \displaystyle \sum_{d \mid N, d > 1} \dfrac{\phi(d)}{k_d} $. The determinant of the
    adjacency matrix of the modular Conway amusical graph is:
    \begin{equation}
        \det M_N = \begin{cases}
            \phantom{- }0               & \text{if } k_p \text{ is even for at least one prime } p \mid N, \\
            \phantom{- }3 \cdot 2^{K_N} & \text{if } k_p \text{ is odd for all primes } p \mid N \text{ and }
            N \equiv \pm 1, \pm 5 \pmod{24}, \\
            - 3 \cdot 2^{K_N} & \text{if } k_p \text{ is odd for all primes } p \mid N \text{ and }
            N \equiv \pm 7, \pm 11 \pmod{24}.
        \end{cases}
    \end{equation}
\end{theorem}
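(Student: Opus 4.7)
The plan is to extend the Fourier-analytic proof of Section 2.8 to three permutation matrices rather than two. For $N$ coprime to $6$, the three cases in the definition of $U$ induce permutations of $\Z_N$: $\sigma_e(a) = 3 \cdot 2^{-1} a$, $\pi_1(a) = 4^{-1}(3a+1)$, and $\pi_{-1}(a) = 4^{-1}(3a-1)$, so that $M_N = P(\sigma_e) + P(\pi_1) + P(\pi_{-1})$. The crucial observation is that $\pi_{-1} = \tau_{-2^{-1}} \circ \pi_1$, which gives $P(\pi_1) + P(\pi_{-1}) = (I + S) P(\pi_1)$ with $S = P(\tau_{-2^{-1}})$. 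Multiplying on the right by $P(\pi_1)^{-1}$ and taking determinants then yields
\[ \det M_N = \sign(\pi_1) \cdot \det\bigl(P(\rho) + I + S\bigr), \qquad \rho = \sigma_e \pi_1^{-1}: a \mapsto 2a - 2^{-1}. \]

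I would then pass to the Fourier basis $v_k$ of $\C^N$ with $(v_k)_j = e^{2\pi \ii jk/N}$. In this basis $S$ acts diagonally with eigenvalue $\eta_k = e^{2\pi \ii k \cdot 2^{-1}/N}$ at $v_k$, while $P(\rho)$ sends $v_k$ to $\alpha_k \, v_{2^{-1} k}$ with $\alpha_k = e^{2\pi \ii k \cdot 4^{-1}/N}$. Hence $P(\rho) + I + S$ is block diagonal along the orbits $\mathcal{O} = \{k_0, 2^{-1} k_0, \ldots, 2^{-(\ell-1)} k_0\}$ of $\times 2$ on $\Z_N$, and on each orbit a cofactor expansion of the $\ell \times \ell$ cyclic bidiagonal block gives a determinant of
\[ \prod_{k \in \mathcal{O}}(1 + \eta_k) + (-1)^{\ell-1} \prod_{k \in \mathcal{O}} \alpha_k. \]
Two simplifications collapse this expression. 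First, from $2^\ell k_0 \equiv k_0 \pmod N$ and a geometric-series identity one gets $\sum_{k \in \mathcal{O}} k \equiv 0 \pmod N$, and since $4^{-1}$ is an integer this forces $\prod_k \alpha_k = 1$. Second, $\times 2$-invariance of $\mathcal{O}$ permits the reindexing $\prod_k(1+\eta_k) = \prod_{m \in \mathcal{O}}(1+\zeta_N^m)$, and for $\mathcal{O} \neq \{0\}$ the telescoping identity
\[ \prod_{m \in \mathcal{O}}(1+\zeta_N^m) = \prod_{m \in \mathcal{O}} \frac{1-\zeta_N^{2m}}{1-\zeta_N^m} = 1 \]
applies, while for $\mathcal{O} = \{0\}$ the product is $2$. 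Thus the orbit $\{0\}$ contributes $3$, and an orbit $\mathcal{O} \neq \{0\}$ of length $\ell$ contributes $1 + (-1)^{\ell-1}$, which is $2$ if $\ell$ is odd and $0$ if $\ell$ is even.

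The orbits of $\times 2$ on $\Z_N$ consist of $\{0\}$ together with $\phi(d)/k_d$ orbits of length $k_d$ for each divisor $d > 1$ of $N$, so by Lemma~\ref{lem:order} applied to $g = 2$ I obtain $\det(P(\rho) + I + S) = 3 \cdot 2^{K_N}$ if every $k_p$ is odd, and $0$ otherwise. To finish, I would compute the sign by writing $\pi_1 = \tau_{4^{-1}} \circ m_{3 \cdot 4^{-1}}$; since $4^{-1}$ is coprime to $N$, the translation is a single $N$-cycle with sign $(-1)^{N-1} = +1$ for odd $N$, so Lemma~\ref{lem:zolotarev} gives $\sign(\pi_1) = \jacobi{3 \cdot 4^{-1}}{N} = \jacobi{3}{N}$. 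By quadratic reciprocity, $\jacobi{3}{N} = +1$ exactly when $N \equiv \pm 1 \pmod{12}$. The hypothesis that $\ord_p(2)$ is odd for every prime $p \mid N$ forces $p \equiv \pm 1 \pmod 8$, hence $N \pmod{24} \in \{1, 7, 17, 23\}$, and on this set the conditions $N \equiv \pm 1 \pmod{12}$ and $N \equiv \pm 1, \pm 5 \pmod{24}$ coincide, recovering the sign stated in the theorem. The main technical obstacle I anticipate is the cofactor expansion for the cyclic bidiagonal block and the bookkeeping that matches the two descriptions of the sign; once those are in hand, the remaining identities mirror the Collatz proof faithfully.
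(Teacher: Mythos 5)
Your argument is correct, and it follows the same Fourier-analytic template as the paper (reduce $M_N$ by a permutation matrix, block-diagonalize over the orbits of doubling via the DFT, evaluate a telescoping product, fix the sign with Lerch--Zolotarev), but with a genuinely different decomposition that changes two things. The paper factors out the even-branch permutation $\pi_{\ast}: n \mapsto 3\cdot 2^{-1}n$, obtaining $M_N^{\ast} = I_N + X_+ + X_-$, conjugates $X_+ + X_-$ by the DFT into a weighted permutation matrix $\Pi_2\cdot\Delta_c$ with weights $c_j = \zeta^{4^{-1}j}+\zeta^{-4^{-1}j}$, computes the cycle products $P_O=1$ by the telescoping identity $(w-w^{-1})\prod_t(w^{2^t}+w^{-2^t}) = w^{2^{\ell}}-w^{-2^{\ell}}$, and only then shifts the spectrum by $1$. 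You instead factor out the odd-branch permutation $\pi_1$, so each Fourier block is a diagonal-plus-weighted-cycle matrix whose determinant you read off directly as $\prod_k(1+\eta_k) + (-1)^{\ell-1}\prod_k\alpha_k$; your two evaluations ($\sum_{k\in\mathcal{O}}k\equiv 0 \pmod N$ forcing $\prod_k\alpha_k=1$, and the telescoping $\prod_m(1+\zeta^m)=1$ for nontrivial orbits) replace the paper's $P_O=1$ computation and its product of $1+\lambda$ over roots of unity, with the parity of $\ell$ entering through the sign of the cycle rather than through that product; both give $1+(-1)^{\ell-1}$ per nontrivial orbit, so the outcomes agree. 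The real divergence is in the sign: the paper gets $\sign(\pi_{\ast})=\jacobi{6}{N}$, which matches the $\pmod{24}$ case split directly, whereas you get $\sign(\pi_1)=\jacobi{3}{N}$, and these differ by $\jacobi{2}{N}$. Your reconciliation --- that $\ord_p(2)$ odd makes $2$ a quadratic residue mod $p$, hence $p\equiv\pm1\pmod 8$ and $\jacobi{2}{N}=1$ whenever the determinant is nonzero --- is correct and is the one extra step your route requires that the paper's does not; it also explains why the theorem's sign condition collapses to $N\equiv\pm1\pmod{12}$ under the nonvanishing hypothesis. The cyclic-bidiagonal cofactor expansion you flag as an obstacle is routine, since only the identity and the full cycle contribute to the Leibniz expansion; I see no gap.
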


In our example $ M_7 $ we have $ k = 3 $ and thus $ \det M_7 = -12 $. Table \ref{tab:conway} gives all
nonzero determinants up to $ N \leq 1000 $. We observe a similar phenomenon of sudden peaks as with the
modular Collatz graph determinants.

\renewcommand{\arraystretch}{1.05}

\begin{table}[ht] \centering
{\footnotesize    $ \begin{array}[t]{rl|r}
        N &               & \det M_N \\ \hline
        7 &               & - 2^2 \cdot 3 \\
        23 &               &   2^2 \cdot 3 \\
        31 &               & - 2^6 \cdot 3 \\
        47 &               &   2^2 \cdot 3 \\
        49 &         (7^2) &   2^4 \cdot 3 \\
        71 &               &   2^2 \cdot 3 \\
        73 &               &   2^8 \cdot 3 \\
        79 &               & - 2^2 \cdot 3 \\
        89 &               & - 2^8 \cdot 3 \\
        103 &               & - 2^2 \cdot 3 \\
        127 &               & - 2^{18} \cdot 3 \\
        151 &               & - 2^{10} \cdot 3 \\
        161 &  (7 \cdot 23) & - 2^8 \cdot 3 \\
        167 &               &   2^2 \cdot 3 \\
        191 &               &   2^2 \cdot 3 \\
        199 &               & - 2^2 \cdot 3 \\
        217 &  (7 \cdot 31) &   2^{20} \cdot 3 \\
        223 &               & - 2^6 \cdot 3 \\
        233 &               & - 2^8 \cdot 3 \\
        239 &               &   2^2 \cdot 3 \\
        263 &               &   2^2 \cdot 3 \\
        271 &               & - 2^2 \cdot 3
    \end{array} $ \quad\quad
    $ \begin{array}[t]{rl|r}
        N &               & \det M_N \\ \hline
        311 &               &   2^2 \cdot 3 \\
        329 &  (7 \cdot 47) & - 2^8 \cdot 3 \\
        337 &               &   2^{16} \cdot 3 \\
        343 &         (7^3) & - 2^6 \cdot 3 \\
        359 &               &   2^2 \cdot 3 \\
        367 &               & - 2^2 \cdot 3 \\
        383 &               &   2^2 \cdot 3 \\
        431 &               &   2^{10} \cdot 3 \\
        439 &               & - 2^6 \cdot 3 \\
        463 &               & - 2^2 \cdot 3 \\
        479 &               &   2^2 \cdot 3 \\
        487 &               & - 2^2 \cdot 3 \\
        497 &  (7 \cdot 71) & - 2^8 \cdot 3 \\
        503 &               &   2^2 \cdot 3 \\
        511 &  (7 \cdot 73) & - 2^{58} \cdot 3 \\
        529 &        (23^2) &   2^4 \cdot 3 \\
        553 &  (7 \cdot 79) &   2^{16} \cdot 3 \\
        599 &               &   2^2 \cdot 3 \\
        601 &               &   2^{24} \cdot 3 \\
        607 &               & - 2^2 \cdot 3 \\
        623 &  (7 \cdot 89) &   2^{26} \cdot 3 \\
    \end{array} $ \quad\quad
    $ \begin{array}[t]{rl|r}
        N &               & \det M_N \\ \hline
        631 &               & - 2^{14} \cdot 3 \\
        647 &               &   2^2 \cdot 3 \\
        713 & (23 \cdot 31) & - 2^{20} \cdot 3 \\
        719 &               &   2^2 \cdot 3 \\
        721 & (7 \cdot 103) &   2^{16} \cdot 3 \\
        727 &               & - 2^6 \cdot 3 \\
        743 &               &   2^2 \cdot 3 \\
        751 &               & - 2^2 \cdot 3 \\
        823 &               & - 2^2 \cdot 3 \\
        839 &               &   2^2 \cdot 3 \\
        863 &               &   2^2 \cdot 3 \\
        881 &               & - 2^{16} \cdot 3 \\
        887 &               &   2^2 \cdot 3 \\
        889 & (7 \cdot 127) &   2^{56} \cdot 3 \\
        911 &               &   2^{10} \cdot 3 \\
        919 &               & - 2^6 \cdot 3 \\
        937 &               &   2^8 \cdot 3 \\
        961 &        (31^2) &   2^{12} \cdot 3 \\
        967 &               & - 2^2 \cdot 3 \\
        983 &               &   2^2 \cdot 3 \\
        991 &               & - 2^2 \cdot 3 \\
    \end{array} $}
    \caption{Nonzero $ \det M_N $ for $ N \leq 1000 $.}
    \label{tab:conway}
\end{table}

\renewcommand{\arraystretch}{1.2}

To prepare our proof of Theorem \ref{thm:mainConway} we first develop some theory in a few lemmas.

The three rules clearly correspond to the following three permutations.

\begin{definition} \label{def:permConway}
    The permutations $ \pi_{\ast}, \pi_{\pm} : \Z_N \to \Z_N $ are defined by
    \[ \pi_{\ast}(n) \equiv 2^{-1} 3 n \pmod{N} , \quad \pi_\pm(n) \equiv 4^{-1} (3 n \pm 1) \pmod{N}. \]
\end{definition}
Note that in defining $ \pi_{\ast} $ as a permutation we used that $ N $ is not divisible by $ 3 $.

We permute the rows of the matrix $ M_N $ by $ \pi_{\ast}^{-1} $, and then we encounter the permutation
matrices $ I_N $ and $ X_\pm $ corresponding to the permutations
$ \pi_\pm \, \pi_{\ast}^{-1} : n \rightarrow 4^{-1} (2n \pm 1) $, and they add up to
\[ M_N^{\ast} = I_N + X_+ + X_- .\]

Note that $ \det M_N = \pm \det M_N^{\ast} $, where the sign is the sign of the permutation $ \pi_{\ast} $.
We first determine this sign.

\begin{lemma} \label{lem:conwaysign}
    The sign of the permutation $ \pi_{\ast} $ is
    \[ \begin{cases}
        1 & \text{if } N \equiv \pm 1, \pm 5 \pmod{24},  \\
        -1 & \text{if } N \equiv \pm 7, \pm 11 \pmod{24} .
    \end{cases} \]
\end{lemma}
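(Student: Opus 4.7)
The plan is to apply the Lerch-Zolotarev lemma directly. Since $\pi_{\ast}$ is the multiplication-by-$a$ permutation on $\Z_N$ for $a \equiv 2^{-1} \cdot 3 \pmod{N}$, Lemma \ref{lem:zolotarev} gives
\[ \sign(\pi_{\ast}) = \jacobi{2^{-1} \cdot 3}{N} = \jacobi{2}{N} \jacobi{3}{N}, \]
where I used that the Jacobi symbol is multiplicative and that $\jacobi{2^{-1}}{N} = \jacobi{2}{N}$ because $2 \cdot 2^{-1} \equiv 1 \pmod{N}$ is a square. So the task reduces to evaluating the product of two Jacobi symbols as a function of $N \pmod{24}$.

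For $\jacobi{2}{N}$ I would use the standard supplement: it equals $+1$ when $N \equiv \pm 1 \pmod{8}$ and $-1$ when $N \equiv \pm 3 \pmod{8}$. For $\jacobi{3}{N}$, since $3$ is an odd prime and $N$ is odd and coprime to $3$, quadratic reciprocity yields
\[ \jacobi{3}{N} = (-1)^{(N-1)/2} \jacobi{N}{3}, \]
where $\jacobi{N}{3}$ equals $+1$ if $N \equiv 1 \pmod{3}$ and $-1$ if $N \equiv 2 \pmod{3}$. Combining these, $\jacobi{3}{N}$ depends only on $N \pmod{12}$, while $\jacobi{2}{N}$ depends only on $N \pmod{8}$, so the product depends only on $N \pmod{\lcm(8,12)} = N \pmod{24}$.

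The final step is the bookkeeping: enumerate the eight residue classes of $N$ modulo $24$ that are coprime to $6$, namely $\pm 1, \pm 5, \pm 7, \pm 11$, and check in each case that $\jacobi{2}{N} \jacobi{3}{N}$ agrees with the claim. For example, $N \equiv 1 \pmod{24}$ gives $(+1)(+1) = +1$; $N \equiv 5 \pmod{24}$ gives $(-1)(-1) = +1$; $N \equiv 7 \pmod{24}$ gives $(+1)(-1) = -1$; $N \equiv 11 \pmod{24}$ gives $(-1)(+1) = -1$; and the four negatives are handled identically. This exhausts all cases and completes the proof.

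There is no real obstacle here: once Lerch-Zolotarev is invoked, everything is a finite case check using classical Jacobi symbol formulas. The only mild care needed is to notice that the two supplementary/reciprocity inputs have moduli $8$ and $12$, whose lcm is exactly $24$, which is why the statement is phrased modulo $24$.
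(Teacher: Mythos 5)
Your proof is correct and follows essentially the same route as the paper, which simply says the lemma ``follows immediately from Zolotarev's Lemma and a small computation''; you have written out that computation explicitly (namely $\sign(\pi_{\ast}) = \jacobi{2}{N}\jacobi{3}{N}$, evaluated via the supplement for $2$ and reciprocity for $3$, with the case check modulo $24$), and all eight residue classes come out as claimed.
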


\begin{proof}
    This follows immediately from Zolotarev's Lemma \cite{Z} and a small computation.
\end{proof}

In our example we have the rows of $ M_7 $ taken in the order $ 0, 3, 6, 2, 5, 1, 4 $:

\[ M_7^{\ast} = \begin{pmatrix} \bl{1} & . & \gr{1} & . & . & \rd{1} & . \\
    . & \bl{1} & \rd{1} & . & . & . & \gr{1} \\
    . & . & \bl{1} & \gr{1} & . & . & \rd{1} \\
    \gr{1} & . & . & \bl{1}\!+\!\rd{1} & . & . & . \\
    \rd{1} & . & . & . & \bl{1}\!+\!\gr{1} & . & . \\
    . & \gr{1} & . & . & \rd{1} & \bl{1} & . \\
    . & \rd{1} & . & . & . & \gr{1} & \bl{1} \end{pmatrix} . \]

This is illustrative for the general pattern, that we show in Figure \ref{fig:Mstar}.
\begin{figure}[ht] \centering
    \includegraphics[width=0.3\textwidth]{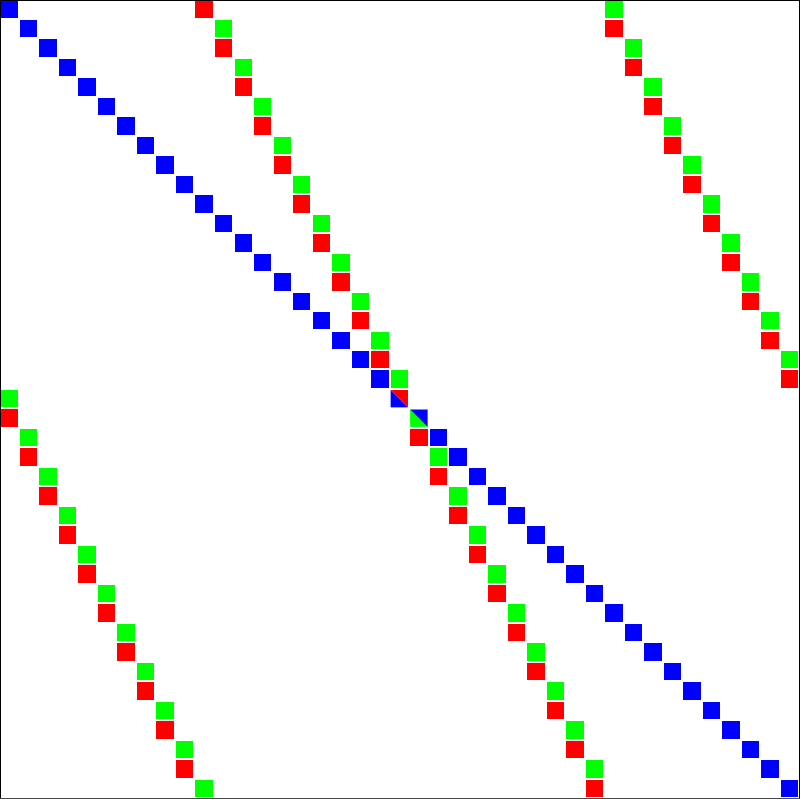} \quad
    \includegraphics[width=0.3\textwidth]{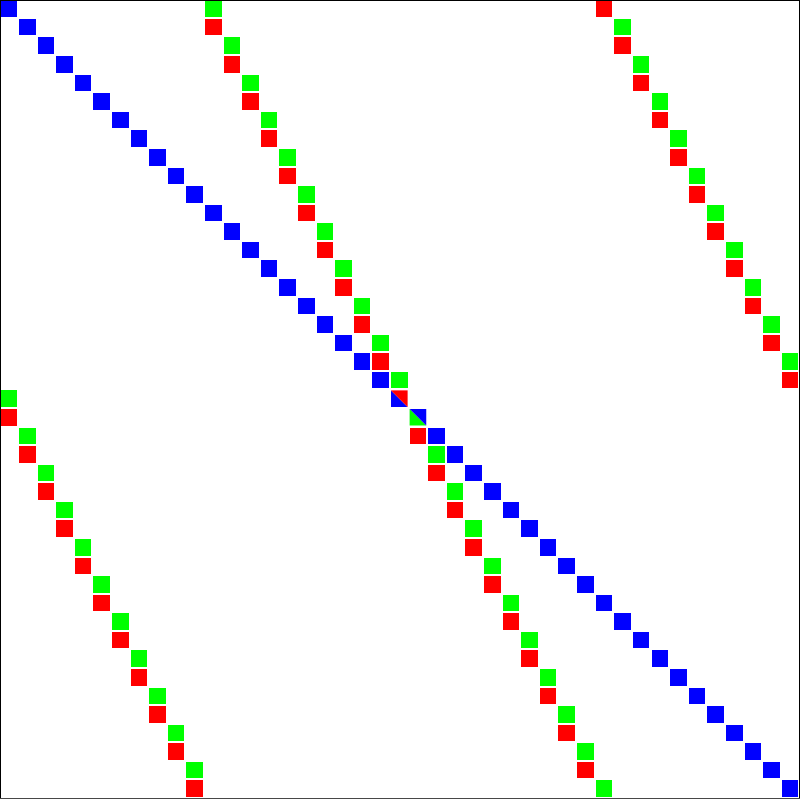}
    \caption{$ M_N^{\ast} $ for $ N \equiv 1 \pmod{4} $ (left) and $ N \equiv 3 \pmod{4} $ (right).}
    \label{fig:Mstar}
\end{figure}

Our next goal is to determine the spectrum of the matrix $ X_+ + X_- $, because as we saw above it is a
standard trick to determine from it the spectrum of $ I_N + X_+ + X_- $, and thus its determinant.

\begin{lemma} \label{lem:eigenvaluesX}
    The characteristic polynomial of $ X_+ + X_- $ is
    $ (x-2) \displaystyle\prod_{d \mid N, d > 1} \left(x^{k_d}-1\right)^{\frac{\phi(d)}{k_d}} $.
\end{lemma}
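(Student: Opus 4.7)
The plan is to diagonalize $A := X_+ + X_-$ in the discrete Fourier basis of $\C^{\Z_N}$. Writing $\omega = e^{2\pi\ii/N}$ and $\chi_k = (\omega^{kn})_{n\in\Z_N}$ for $k\in\Z_N$, since $X_\pm$ is the permutation matrix of the affine map $n \mapsto 2^{-1}n \pm 4^{-1}\pmod N$, a direct calculation gives
\[ A\chi_k = \mu_k\,\chi_{2^{-1}k}, \qquad \mu_k := \omega^{4^{-1}k}+\omega^{-4^{-1}k}. \]
Thus $A$ acts as a weighted shift along the permutation $\psi:k\mapsto 2^{-1}k$ of $\Z_N$, and the $\psi$-orbits in $\Z_N$ cut $\C^{\Z_N}$ into $A$-invariant subspaces which I would analyze one at a time.

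The singleton orbit $\{0\}$ gives the all-ones eigenvector $\chi_0$ with eigenvalue $\mu_0=2$, contributing $(x-2)$. For each divisor $d>1$ of $N$, the set $\{k\in\Z_N:\gcd(k,N)=N/d\}$ has $\phi(d)$ elements and splits under $\psi$ into $\phi(d)/k_d$ orbits of length $k_d=\ord_d(2)$, by the standard orbit analysis of $\langle 2\rangle$ on $(\Z/d)^{\ast}$. On such an orbit $\{k_0,2^{-1}k_0,\ldots,2^{-(m-1)}k_0\}$ with $m=k_d$, the matrix of $A$ in the ordered basis $(\chi_{2^{-j}k_0})_j$ is a cyclic shift with weights $\mu_{2^{-j}k_0}$, hence has characteristic polynomial $x^{m}-\Pi$ where $\Pi:=\prod_{j=0}^{m-1}\mu_{2^{-j}k_0}$.

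The crux is to show $\Pi=1$ on every non-trivial orbit, which would give a factor $x^{k_d}-1$ per orbit. To this end I would factor
\[ \mu_{2^{-j}k_0} = \omega^{-4^{-1}2^{-j}k_0}\bigl(1+\omega^{2^{-(j+1)}k_0}\bigr), \]
and note that the exponents in the leading phases sum to $-2^{-1}k_0(1-2^{-m})$, which vanishes $\!\!\pmod N$ precisely because $m$ is an orbit length. Setting $\eta_i:=\omega^{2^{-i}k_0}$ and reindexing (using $\eta_m=\eta_0$) reduces $\Pi$ to $\prod_{i=0}^{m-1}(1+\eta_i)$. For this last identity I would use $(1+\eta_i)(1-\eta_i)=1-\eta_i^{2}=1-\eta_{i-1}$, and multiply over $i$; after cyclically relabelling the right-hand product this gives
\[ \Bigl(\prod_i (1+\eta_i)\Bigr)\Bigl(\prod_i(1-\eta_i)\Bigr) = \prod_i (1-\eta_i). \]
Since $k_0\neq 0$ and $2^{-i}$ is a unit $\!\!\pmod N$, each $\eta_i\neq 1$, so $\prod_i(1-\eta_i)$ is nonzero and may be cancelled, yielding $\Pi=1$.

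Assembling the contribution $(x-2)$ from the trivial orbit with one factor $x^{k_d}-1$ for each of the $\phi(d)/k_d$ orbits of length $k_d$ produces exactly the stated characteristic polynomial. The only real obstacle I anticipate is the bookkeeping in the phase telescoping; conceptually, this is the Fourier analog of the alternative proof given above for the Collatz case, adapted to a sum rather than a single permutation matrix.
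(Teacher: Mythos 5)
Your proposal is correct and follows essentially the same route as the paper's proof: pass to the discrete Fourier basis, observe that $X_++X_-$ acts as a weighted shift along the orbits of multiplication by $2^{\pm 1}$ on $\Z_N$, read off one factor $x^{k_d}-\Pi$ per orbit, and show $\Pi=1$ on every nontrivial orbit by a telescoping identity (your $(1+\eta)(1-\eta)=1-\eta^2$ cancellation is the same identity the paper writes as $(w-w^{-1})\prod_t(w^{2^t}+w^{-2^t})=w^{2^{\ell}}-w^{-2^{\ell}}$, with the phase factor split off). All the steps, including the vanishing of the accumulated phase and the nonvanishing of $\prod_i(1-\eta_i)$, check out.
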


For example, for $ N = 35 $ we have $ k = 12 $, and
\[ \begin{array}[t]{r|rrr}
    d & \phi(d) & k_d & \dfrac{\phi(d)}{k_d} \\ \hline
    1 &       1 &   1 & 1 \\
    5 &       4 &   4 & 1 \\
    7 &       6 &   3 & 2 \\
    35 &      24 &  12 & 2 \\
\end{array} \]
reflecting that the characteristic polynomial is $ (x-2)(x^4-1)(x^3-1)^2(x^{12}-1)^2 $.

Note that for prime $ N $ the characteristic polynomial of $ X_+ + X_- $ has a much simpler form:
then it is $ (x-2) \left(x^k-1\right)^{\frac{N-1}{k}} $.

The spectra of $ X_+ $ and $ X_- $ separately can be read off of their characteristic polynomials, both
equal to $ \displaystyle\prod_{d \mid N} \left(x^{k_d}-1\right)^{\frac{\phi(d)}{k_d}} $ (as can
immediately be seen from their cycle structures). So the spectrum of $ X_+ + X_- $ is almost equal to
those, just one eigenvalue $ 1 $ is replaced by a $ 2 $.

\begin{proof}[Proof of Lemma \ref{lem:eigenvaluesX}]
    The main idea of the proof is to change to a Discrete Fourier basis. Let $ \zeta = e^{2 \pi \ii / N} $,
    and $ F = \left( \zeta^{ij} \right) _{i,j} $ be the base change matrix from the standard basis of $ \C^N $
    to the Discrete Fourier basis. So we are interested in
    \[ \hat{X} = F^{-1} \cdot (X_+ + X_-) \cdot F \]
    as it has the same spectrum of $ X_+ + X_- $, and it's easier to compute it this way. We have
    \begin{eqnarray*} \left((X_+ + X_-) \cdot F \right)_{i,j}
        & = & (X_+ + X_-)_i \cdot F^{\top}_j = F_{4^{-1}(2i+1),j} + F_{4^{-1}(2i-1),j} \\
        & = & \zeta^{4^{-1}(2i+1)j} + \zeta^{4^{-1}(2i-1)j} = \zeta^{2^{-1}ij} c_j
    \end{eqnarray*}
    where $ c_j = \zeta^{4^{-1}j} + \zeta^{-4^{-1}j} $. We write $ \Pi_2 $ for the permutation matrix of the
    permutation $ \pi_2: n \to 2 n \pmod{N} $, and $ \Delta_c $ for the diagonal matrix with
    $ c_0, c_1, \ldots, c_{N-1} $ on the main diagonal, and we now have that
    $ (X_+ + X_-) \cdot F = \Pi_2^{-1} \cdot F \cdot \Delta_c $. To compute $ \hat{X} $ we multiply on the left
    by $ F^{-1}  $, so we are now interested in what conjugation by $ F $ does to $ \Pi_2^{-1} $. This simply
    gives $ \Pi_2 $, since $ \Pi_2^{-1} \cdot F = F \cdot \Pi_2 $. So we arrive at a very nice expression for
    $ \hat{X} $, namely
    \[ \hat{X} = \Pi_2 \cdot \Delta_c . \]
    In other words, this gives the weighted permutation matrix of $ \pi_2 $, with the
    $ c_0, c_1, \ldots, c_{N-1} $ as weights. The cycle decomposition does not depend on the weights, so it
    is known. For each divisor $ d $ of $ N $ there are $ \dfrac{\phi(d)}{k_d} $ cycles of length $ d $. We
    fix a cycle $ O = (j_0, j_1, \ldots, j_{\ell-1}) $ with length $ \ell = k_d $ and
    $ j_{t+1} \equiv 2 j_t \pmod{N} $ for all $ t \pmod{\ell} $. The corresponding weighted permutation
    matrix is
    \[ X_O = \begin{pmatrix}
        0       & c_{j_1} & 0       & \cdots & 0 \\
        0       & 0       & c_{j_2} & \cdots & 0 \\
        \vdots  & \vdots  & \ddots  & \ddots & \vdots \\
        0       & 0       & 0       & \cdots & c_{j_{\ell-1}} \\
        c_{j_0} & 0       & 0       & \cdots & 0 \\
    \end{pmatrix} . \]
    Then we have $ X_O^{\ell} = P_O I_{\ell} $ with $ P_O = \dprod_{t=0}^{\ell-1} c_{j_t} $. So the eigenvalues
    of $ X_O $ satisfy $ \lambda^{\ell} = P_O $, in other words, the characteristic polynomial of $ X_O $ is
    $ x^{\ell} - P_O $.

    The permutation $ \pi_2 $ has one fixpoint, corresponding to $ O = (0) $ and the weight $ c_0 = 2 $, so
    $ P_O = 2 $. This gives a factor $ x - 2 $ in the characteristic polynomial.

    We now prove that for all other cycles $ P_O = 1 $. Let us write $ z_t = \zeta^{4^{-1} j_t} $ for
    $ t = 0, 1, \ldots, \ell-1 $. Then $ P_O = \dprod_{t=0}^{\ell-1} \left( z_t + z_t^{-1} \right) $.
    Using $ j_{t+1} \equiv 2 j_t \pmod{N} $ we find $ z_{t+1} = z_t^2 $, so $ z_t = z_0^{2^t} $. So
    \[ P_O = \dprod_{t=0}^{\ell-1} \left( z_0^{2^t} + z_0^{-2^t} \right) . \]
    For this product we can apply the telescoping identity
    \[ (w - w^{-1}) \dprod_{t=0}^{\ell-1} \left( w^{2^t} + w^{-2^t} \right) = w^{2^{\ell}} - w^{-2^{\ell}} , \]
    and this is nontrivial precisely because $ d > 1 $ implies $ z_0 \neq \pm 1 $. Finally note that
    $ z_0^{2^{\ell}} = z_0 $ due to the cycle being closed, so we immediately find
    \[ P_O = \dfrac{z_0^{2^{\ell}} - z_0^{-2^{\ell}}}{z_0 - z_0^{-1}} = 1. \]
    So each cycle $ O $ of length $ \ell = k_d $ for $ d > 1 $ contributes a factor $ x^{k_d} - 1 $ to the
    characteristic polynomial, and there are $ \dfrac{\phi(d)}{k_d} $ of those cycles, so this proves the lemma.
\end{proof}

We can know complete the proof of Theorem \ref{thm:mainConway}.

\begin{proof}[Proof of Theorem \ref{thm:mainConway}]
    The vector $ v $ is an eigenvector of $ X_+ + X_- $ for the eigenvalue $ \lambda $ if and only if $ v $
    is an eigenvector of $ I_P + X_+ + X_- $ for the eigenvalue $ 1 + \lambda $. This means that by Lemma
    \ref{lem:eigenvaluesX} we know the spectrum of $ M^{\ast} $: it consists of $ 3 $ with multiplicity
    $ 1 $, and for each $ d \mid N $, $ d > 1 $, we get a full set of $ 1 + \lambda $ for the $ k_d $'th
    roots of unity, who together multiply to $ 2 $. With Lemma \ref{lem:conwaysign} this concludes the proof
    of Theorem \ref{thm:mainConway}.
\end{proof}

Finally we prove connectedness.

\begin{lemma} \label{lem:connectedconway}
    All modular Conway amusical graphs are strongly connected.
\end{lemma}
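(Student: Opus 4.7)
The plan is to follow the strategy of Lemma \ref{lem:connected}: exhibit every translation $ \tau_\beta : n \mapsto n + \beta \pmod{N} $ as an element of the group $ H := \langle \pi_\ast, \pi_+, \pi_- \rangle \subseteq \mathrm{Sym}(\Z_N) $. Since each of the three generators corresponds to a step in $ G_N $, and since the inverse of a finite-order permutation is one of its positive powers (hence a walk in $ G_N $), once every translation lies in $ H $ the element $ \tau_{b - a} $ gives a walk from any vertex $ a $ to any vertex $ b $, proving strong connectedness. I would focus first on the principal case $ \gcd(N, 6) = 1 $, in which $ \pi_\ast, \pi_+, \pi_- $ are genuine permutations of $ \Z_N $.

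The key observation, which makes the Conway case noticeably shorter than the Collatz case, is that $ \pi_+ $ and $ \pi_- $ differ by an additive constant: for every $ n \in \Z_N $,
\[ \pi_+(n) - \pi_-(n) \equiv 4^{-1}(3n+1) - 4^{-1}(3n-1) \equiv 2^{-1} \pmod{N}, \]
so that $ \pi_+ = \tau_{2^{-1}} \circ \pi_- $ and therefore
\[ \tau_{2^{-1}} = \pi_+ \, \pi_-^{-1} \in H. \]
Since $ 2^{-1} $ is a unit in the additive group $ \Z_N $, its integer multiples exhaust $ \Z_N $, and every translation $ \tau_\beta $ appears as a power of $ \tau_{2^{-1}} $, hence lies in $ H $. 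Whereas Lemma \ref{lem:connected} needs a four-fold commutator $ \pi_3 \pi_2^{-1} \pi_3^{-1} \pi_2 $ to produce a translation, the parallel structure of $ \pi_\pm $ delivers one here from a single product.

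The only remaining point concerns the degenerate moduli where $ N $ is even or divisible by $ 3 $. The structural identities noted earlier (the left and right halves of $ M_N $ coincide when $ N $ is even; the top, middle, and bottom thirds of $ M_N $ coincide when $ 3 \mid N $) supply additional edges that only enhance connectedness, so I expect the principal case to suffice modulo a short direct check, mirroring the way Lemma \ref{lem:connected} also relies implicitly on $ N $ being coprime to $ 2 $ in order to use $ 2^{-1} \pmod{N} $. I anticipate no substantial obstacle: the whole argument turns on recognising the one identity $ \pi_+ \pi_-^{-1} = \tau_{2^{-1}} $.
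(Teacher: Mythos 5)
Your proposal is correct and matches the paper's own proof, which likewise reduces to the argument of Lemma \ref{lem:connected} with $\tau = \pi_+\,\pi_-^{-1}$ recognised as a nontrivial translation (by $2^{-1}$) whose powers exhaust all translations of $\Z_N$. The identification $\pi_+\pi_-^{-1}=\tau_{2^{-1}}$ and the reduction to $\gcd(N,6)=1$ are exactly the intended route.
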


\begin{proof}
    Similar to the proof of Lemma \ref{lem:connected}, but with e.g.\ $ \tau = \pi_+ \, \pi_-^{-1} $, which
    is a nontrivial translation.
\end{proof}


\vskip20pt\noindent {\bf Acknowledgements.} The authors are grateful to Thijs Laarhoven for bringing them
together, and to Aart Blokhuis for discussions.

\appendix
\begin{section}{Appendix -- Proof of Lemma \ref{lem:multipleedges}}
{\footnotesize
    \begin{proof}[Proof of Lemma \ref{lem:multipleedges}]
        Let $ n $ be an integer with $ 0 \leq n \leq N-1 $. \\
        \textbf{a)} For a loop we have $ T(n) \equiv n \pmod{N} $ or $ T(n+N) \equiv n \pmod{N} $. We distinguish
        four cases, according to the parities of $ n $ respectively $ n + N $.
        \begin{itemize} \setlength{\itemsep}{0pt}
            \item If $ T(n) \equiv n \pmod{N} $ and $ n $ is even, then $ \dfrac{n}{2} \equiv n \pmod{N} $, so
            $ \dfrac{n}{2} \equiv 0 \pmod{N} $, and this is possible only if $ n = 0 $.
            \item If $ T(n) \equiv n \pmod{N} $ and $ n $ is odd, then $ \dfrac{3n+1}{2} \equiv n \pmod{N} $, so
            $ 3n+1 \equiv 2n \pmod{N} $, and it follows that $ n = N - 1 $. In this case $ N $ is even.
            \item If $ T(n+N) \equiv n \pmod{N} $ and $ n + N $ is even, then
            $ \dfrac{n+N}{2} \equiv n \pmod{N} $, so $ n + N \equiv 2n \pmod{N} $, and this is possible only
            if $ n = 0 $. In this case $ N $ is even.
            \item If $ T(n+N) \equiv n \pmod{N} $ and $ n + N $ is odd, then
            $ \dfrac{3n+3N+1}{2} \equiv n \pmod{N} $, so $ 3n + 3N + 1 \equiv 2n \pmod{N} $, and it follows
            that $ n = N - 1 $.
        \end{itemize}
        Those loops do occur for every $ N $, since $ T(0) = 0 $ and $ T(2N-1) = 3N-1 \equiv N-1 \pmod{N} $.
        \\
        \textbf{b)} For a strongly double edge we have $ T(n+N) \equiv T(n) \pmod{N} $. We distinguish four
        cases, according to the parities of $ n $ and $ N $.
        \begin{itemize} \setlength{\itemsep}{0pt}
            \item If $ n $ and $ N $ are both even, then $ \dfrac{n+N}{2} \equiv \dfrac{n}{2} \pmod{N} $, so
            $ \dfrac{N}{2} \equiv 0 \pmod{N} $, which is impossible.
            \item If $ n $ is odd and $ N $ even, then $ \dfrac{3n+3N+1}{2} \equiv \dfrac{3n+1}{2} \pmod{N} $,
            so $ \dfrac{3N}{2} \equiv 0 \pmod{N} $, which is impossible.
            \item If $ n $ is even and $ N $ odd, then $ \dfrac{3N+3n+1}{2} \equiv \dfrac{n}{2} \pmod{N} $, so
            $ n = \dfrac{N-1}{2} $, and this implies $ N \equiv 1 \pmod{4} $. Indeed this occurs, since
            $ T\left(\dfrac{N-1}{2}\right) = \dfrac{N-1}{4} $ and $ T\left(\dfrac{N-1}{2}+N\right) =
            T\left(\dfrac{3N-1}{2}\right) = \dfrac{9N-1}{4} \equiv \dfrac{N-1}{4} \pmod{N} $.
            \item If $ n $ and $ N $ are both odd, then $ \dfrac{n+N}{2} \equiv \dfrac{3n+1}{2} \pmod{N} $, so
            $ n = \dfrac{N-1}{2} $, and this implies $ N \equiv 3 \pmod{4} $. Indeed this occurs, since
            $ T\left(\dfrac{N-1}{2}\right) = \dfrac{3N-1}{4} $ and $ T\left(\dfrac{N-1}{2}+N\right) =
            T\left(\dfrac{3N-1}{2}\right) = \dfrac{3N-1}{4} $.
        \end{itemize}
        \textbf{c)} For a weakly double edge we have $ T(T(n)) \equiv n \pmod{N} $ or
        $ T(T(n)+N) \equiv n \pmod{N} $ or $ T(T(n+N)) \equiv n \pmod{N} $ or $ T(T(n+N)+N) \equiv n \pmod{N} $.
        For the function $ T $ we have two choices: $ T = T_0 $ or $ T = T_1 $, where
        $ T_0(\ast) = \dfrac{\ast}{2} $, $ T_1(\ast) = \dfrac{3\ast+1}{2} $. So we have 16 cases to solve
        (remember that $ 0 \leq n < N $):
        \begin{itemize} \setlength{\itemsep}{0pt}
            \item $ T_0(T_0(n))     - n \equiv 0 \pmod{N} \Rightarrow
            \dfrac{3}{4} n \equiv 0 \pmod{N} $, which implies $ n = 0 $ for all $ N \geq 2 $.
            \item $ T_0(T_1(n))     - n \equiv 0 \pmod{N} \Rightarrow
            \dfrac{1}{4} n \equiv \dfrac{1}{4} \pmod{N} $, which implies $ n = 1 $ for all $ N \geq 2 $.
            \item $ T_1(T_0(n))     - n \equiv 0 \pmod{N} \Rightarrow
            \dfrac{1}{4} n \equiv \dfrac{2}{4} \pmod{N} $, which implies $ n = 2 $ for all $ N \geq 3 $.
            \item $ T_1(T_1(n))     - n \equiv 0 \pmod{N} \Rightarrow
            \dfrac{5}{4} n \equiv - \dfrac{5}{4} \pmod{N} $, which implies $ n = \dfrac{4}{5} N - 1 $
            for all $ N \geq 5 $ with $ 5 \mid N $.
            \item $ T_0(T_0(n)+N)   - n \equiv 0 \pmod{N} \Rightarrow
            \dfrac{3}{4} n \equiv \dfrac{2N}{4} \pmod{N} $, which implies $ n = \dfrac{2}{3} N $
            for all $ N \geq 3 $ with $ 3 \mid N $.
            \item $ T_0(T_1(n)+N)   - n \equiv 0 \pmod{N} \Rightarrow
            \dfrac{1}{4} n \equiv \dfrac{2N+1}{4} \pmod{N} $, which is impossible for all $ N \geq 2 $.
            \item $ T_1(T_0(n)+N)   - n \equiv 0 \pmod{N} \Rightarrow
            \dfrac{1}{4} n \equiv \dfrac{6N+2}{4} \pmod{N} $, which is impossible for all $ N \geq 2 $.
            \item $ T_1(T_1(n)+N)   - n \equiv 0 \pmod{N} \Rightarrow
            \dfrac{5}{4} n \equiv - \dfrac{6N+5}{4} \pmod{N} $, which implies $ n = \dfrac{2}{5} N - 1 $
            for all $ N \geq 5 $ with $ 5 \mid N $.
            \item $ T_0(T_0(n+N))   - n \equiv 0 \pmod{N} \Rightarrow
            \dfrac{3}{4} n \equiv \dfrac{N}{4} \pmod{N} $, which implies $ n = \dfrac{1}{3} N $
            for all $ N \geq 3 $ with $ 3 \mid N $.
            \item $ T_0(T_1(n+N))   - n \equiv 0 \pmod{N} \Rightarrow
            \dfrac{1}{4} n \equiv \dfrac{3N+1}{4} \pmod{N} $, which is impossible for all $ N \geq 2 $.
            \item $ T_1(T_0(n+N))   - n \equiv 0 \pmod{N} \Rightarrow
            \dfrac{1}{4} n \equiv \dfrac{3N+2}{4} \pmod{N} $, which implies $ n = 0 $, $ N = 2 $.
            \item $ T_1(T_1(n+N))   - n \equiv 0 \pmod{N} \Rightarrow
            \dfrac{5}{4} n \equiv - \dfrac{9N+5}{4} \pmod{N} $, which implies $ n = \dfrac{3}{5} N - 1 $
            for all $ N \geq 5 $ with $ 5 \mid N $.
            \item $ T_0(T_0(n+N)+N) - n \equiv 0 \pmod{N} \Rightarrow
            \dfrac{3}{4} n \equiv \dfrac{3N}{4} \pmod{N} $, which is impossible for all $ N \geq 2 $.
            \item $ T_0(T_1(n+N)+N) - n \equiv 0 \pmod{N} \Rightarrow
            \dfrac{1}{4} n \equiv \dfrac{5N+1}{4} \pmod{N} $, which is impossible for all $ N \geq 2 $.
            \item $ T_1(T_0(n+N)+N) - n \equiv 0 \pmod{N} \Rightarrow
            \dfrac{1}{4} n \equiv \dfrac{9N+2}{4} \pmod{N} $, which is impossible for all $ N \geq 2 $.
            \item $ T_1(T_1(n+N)+N) - n \equiv 0 \pmod{N} \Rightarrow
            \dfrac{5}{4} n \equiv - \dfrac{15N+5}{4} \pmod{N} $, which implies $ n = \dfrac{1}{5} N - 1 $
            for all $ N \geq 5 $ with $ 5 \mid N $, or $ n = N - 1 $ for all $ N \geq 2 $.
        \end{itemize}
        \textbf{d)} That there are no other edges with multiplicity 3 or higher follows immediately by combining
        b) and c), because such an edge must combine a strongly double and a weakly double edge,
        since all nodes have outdegree $ 2 $.
    \end{proof}
}
\end{section}

\begin{section}{Appendix -- Factorizations of characteristic polynomials of $ C_N $}
\begin{table}[h] \centering
    {\scriptsize $ \begin{array}{r|r|l}
            N & \det C_N & \text{irreducible factors with multiplicities} \\ \hline
            2 &   0 & x,x-2 \\
            3 &  -2 & x+1,x-1,x-2 \\
            4 &   0 & x^3,x-2 \\
            5 &   0 & x,x-1,x-2,x^2+x-1 \\
            6 &   0 & x+1,x^3,x-1,x-2 \\
            7 &   0 & x+1,x,x-2,x^4-x^3+x^2-3 x+1 \\
            8 &   0 & x^7,x-2 \\
            9 &   2 & (x+1)^2,(x-1)^2,x-2,x^2-x+1,x^2+x+1 \\
            10 &   0 & x^6,x-1,x-2,x^2+x-1 \\ \hline
            11 &  -8 & x+1,x-1,x-2,(\text{deg } 8) \\
            12 &   0 & x+1,x^9,x-1,x-2 \\
            13 & -32 & x-2,(\text{deg } 12) \\
            14 &   0 & x+1,x^8,x-2,x^4-x^3+x^2-3 x+1 \\
            15 &   0 & (x+1)^3,x,(x-1)^4,x-2,(x^2+1)^2,x^2+x-1 \\
            16 &   0 & x^{15},x-2 \\
            17 &   0 & x+1,x,x-2,(\text{deg } 14) \\
            18 &   0 & (x+1)^2,x^9,(x-1)^2,x-2,x^2-x+1,x^2+x+1 \\
            19 &   0 & x,x-1,x-2,(\text{deg } 16) \\
        \end{array} $}
\end{table}
\begin{table}[h] \centering
    {\scriptsize$ \begin{array}{r|r|l}
            20 &   0 & x^{16},x-1,x-2,x^2+x-1 \\ \hline
            21 &   0 & (x+1)^4,x,(x-1)^3,x-2,(x^2-x+1)^2,(x^2+x+1)^2,x^4-x^3+x^2-3 x+1 \\
            22 &   0 & x+1,x^{11},x-1,x-2,(\text{deg } 8) \\
            23 &   8 & x-2,(\text{deg } 22) \\
            24 &   0 & x+1,x^{21},x-1,x-2 \\
            25 &   0 & x^3,(x-1)^2,x-2,x^2+x-1,(\text{deg } 17) \\
            26 &   0 & x^{13},x-2,(\text{deg } 12) \\
            27 &  -2 & (x+1)^3,(x-1)^3,x-2,(x^2-x+1)^2,(x^2+x+1)^2,(\text{deg } 6),(\text{deg } 6) \\
            28 &   0 & x+1,x^{22},x-2,x^4-x^3+x^2-3 x+1 \\
            29 &   0 & x,x-1,x-2,(\text{deg } 26) \\
            30 &   0 & (x+1)^3,x^{16},(x-1)^4,x-2,(x^2+1)^2,x^2+x-1 \\ \hline
            31 &   0 & x+1,x,x-1,x-2,(\text{deg } 27) \\
            32 &   0 & x^{31},x-2 \\
            33 &   8 & (x+1)^4,(x-1)^4,x-2,(x^4-x^3+x^2-x+1)^2,(x^4+x^3+x^2+x+1)^2,(\text{deg } 8) \\
            34 &   0 & x+1,x^{18},x-2,(\text{deg } 14) \\
            35 &   0 & (x+1)^2,x^4,(x-1)^2,x-2,x^2+x-1,x^4-x^3+x^2-3 x+1,(\text{deg } 20) \\
            36 &   0 & (x+1)^2,x^{27},(x-1)^2,x-2,x^2-x+1,x^2+x+1 \\
            37 &   0 & (x+1)^2,x^2,(x-1)^2,x-2,x^2+1,(\text{deg } 28) \\
            38 &   0 & x^{20},x-1,x-2,(\text{deg } 16) \\
            39 &  32 & (x+1)^3,(x-1)^3,x-2,(x^2+1)^2,(x^2-x+1)^2,(x^2+x+1)^2,(x^4-x^2+1)^2,(\text{deg } 12) \\
            40 &   0 & x^{36},x-1,x-2,x^2+x-1 \\ \hline
            41 &   0 & x+1,x^5,x-2,x^4-x^3+x^2-x+1,x^5+3,(\text{deg } 25) \\
            42 &   0 & (x+1)^4,x^{22},(x-1)^3,x-2,(x^2-x+1)^2,(x^2+x+1)^2,x^4-x^3+x^2-3 x+1 \\
            43 &   0 & x,x-1,x-2,(\text{deg } 40) \\
            44 &   0 & x+1,x^{33},x-1,x-2,(\text{deg } 8) \\
            45 &   0 & (x+1)^6,x,(x-1)^7,x-2,(x^2+1)^4,(x^2-x+1)^3,x^2+x-1,(x^2+x+1)^3, \\
               &     & (x^4-x^2+1)^2 \\
            46 &   0 & x^{23},x-2,(\text{deg } 22) \\
            47 &   8 & x-2,(\text{deg } 46) \\
            48 &   0 & x+1,x^{45},x-1,x-2 \\
            49 &   0 & (x+1)^2,x^3,x-2,x^4-x^3+x^2-3 x+1,(\text{deg } 39) \\
            50 &   0 & x^{28},(x-1)^2,x-2,x^2+x-1,(\text{deg } 17) \\ \hline
            51 &  0 & (x+1)^6,x,(x-1)^5,x-2,(x^2+1)^4,(x^4+1)^4,(\text{deg } 14) \\
            52 &  0 & x^{39},x-2,(\text{deg } 12) \\
            53 &  0 & x,x-1,x-2,(\text{deg } 50) \\
            54 &  0 & (x+1)^3,x^{27},(x-1)^3,x-2,(x^2-x+1)^2,(x^2+x+1)^2,(\text{deg } 6),(\text{deg } 6) \\
            55 &  0 & (x+1)^2,x^3,(x-1)^3,x-2,x^2+x-1,(\text{deg } 8),(\text{deg } 36) \\
            56 &  0 & x+1,x^{50},x-2,x^4-x^3+x^2-3 x+1 \\
            57 &  0 & (x+1)^3,x,(x-1)^4,x-2,(x^2-x+1)^2,(x^2+x+1)^2,(\text{deg } 6)^2,(\text{deg } 6)^2,
            (\text{deg } 16) \\
            58 &  0 & x^{30},x-1,x-2,(\text{deg } 26) \\
            59 & -8 & x-2,(\text{deg } 58) \\
            60 &  0 & (x+1)^3,x^{46},(x-1)^4,x-2,(x^2+1)^2,x^2+x-1 \\ \hline
            61 &  0 & x^6,x-2,x^2+1,x^4-x^2+1,(\text{deg } 48) \\
            62 &  0 & x+1,x^{32},x-1,x-2,(\text{deg } 27) \\
            63 &  0 & (x+1)^11,x,(x-1)^10,x-2,(x^2-x+1)^9,(x^2+x+1)^9,x^4-x^3+x^2-3 x+1 \\
            64 &  0 & x^{63},x-2 \\
            65 &  0 & x+1,x^5,(x-1)^2,x-2,x^2+1,x^2+x-1,(\text{deg } 12),(\text{deg } 40) \\
            66 &  0 & (x+1)^4,x^{33},(x-1)^4,x-2,(x^4-x^3+x^2-x+1)^2,(x^4+x^3+x^2+x+1)^2,(\text{deg } 8) \\
            67 &  0 & x^3,x-1,x-2,x^2+x+1,(\text{deg } 60) \\
            68 &  0 & x+1,x^{52},x-2,(\text{deg } 14) \\
            69 & -8 & (x+1)^3,(x-1)^3,x-2,(\text{deg } 10)^2,(\text{deg } 10)^2,(\text{deg } 22) \\
            70 &  0 & (x+1)^2,x^{39},(x-1)^2,x-2,x^2+x-1,x^4-x^3+x^2-3 x+1,(\text{deg } 20) \\ \hline
            71 &  8 & x-2,(\text{deg } 70) \\
            72 &  0 & (x+1)^2,x^{63},(x-1)^2,x-2,x^2-x+1,x^2+x+1 \\
            73 &  0 & (x+1)^2,x^6,x-2,(x^2-x+1)^2,(\text{deg } 60) \\
            74 &  0 & (x+1)^2,x^{39},(x-1)^2,x-2,x^2+1,(\text{deg } 28) \\
            75 &  0 & (x+1)^5,x^3,(x-1)^7,x-2,(x^2+1)^4,x^2+x-1,(x^4-x^3+x^2-x+1)^2,\\
               &    & (x^4+x^3+x^2+x+1)^2, (\text{deg } 8)^2,(\text{deg } 17) \\
        \end{array} $}
\end{table}
\begin{table}[h] \centering
    {\scriptsize$ \begin{array}{r|r|l}
            76 &  0 & x^{58},x-1,x-2,(\text{deg } 16) \\
            77 &  0 & (x+1)^3,x^3,(x-1)^2,x-2,x^4-x^3+x^2-3 x+1,(\text{deg } 8),(\text{deg } 56) \\
            78 &  0 & (x+1)^3,x^{39},(x-1)^3,x-2,(x^2+1)^2,(x^2-x+1)^2,(x^2+x+1)^2,(x^4-x^2+1)^2, \\
               &    & (\text{deg } 12)
            \\
            79 &  0 & x+1,x,x-2,(\text{deg } 76) \\
            80 &  0 & x^{76},x-1,x-2,x^2+x-1 \\ \hline
            81 &  2 & (x+1)^4,(x-1)^4,x-2,(x^2-x+1)^3,(x^2+x+1)^3,(\text{deg } 6)^2,(\text{deg } 6)^2,
            (\text{deg } 18), \\
               &    & (\text{deg } 18) \\
            82 &  0 & x+1,x^{46},x-2,x^4-x^3+x^2-x+1,x^5+3,(\text{deg } 25) \\
            83 & -8 & x-2,(\text{deg } 82) \\
            84 &  0 & (x+1)^4,x^{64},(x-1)^3,x-2,(x^2-x+1)^2,(x^2+x+1)^2,x^4-x^3+x^2-3 x+1 \\
            85 &  0 & (x+1)^2,x^6,(x-1)^2,x-2,x^2+1,x^2+x-1,(\text{deg } 14),(\text{deg } 56) \\
            86 &  0 & x^{44},x-1,x-2,(\text{deg } 40) \\
            87 &  0 & (x+1)^3,x,(x-1)^4,x-2,(x^2+1)^2,(\text{deg } 6)^2,(\text{deg } 6)^2,(\text{deg } 12)^2,
            (\text{deg } 26) \\
            88 &  0 & x+1,x^{77},x-1,x-2,(\text{deg } 8) \\
            89 &  0 & x+1,x,x-2,(\text{deg } 86) \\
            90 &  0 & (x+1)^6,x^{46},(x-1)^7,x-2,(x^2+1)^4,(x^2-x+1)^3,x^2+x-1,(x^2+x+1)^3, \\
               &    & (x^4-x^2+1)^2 \\
            \hline
            91 &  0 & (x+1)^2,x^{13},x-1,x-2,x^2+1,x^2-x+1,x^2+x+1,x^4-x^2+1, \\
            &    & x^4-x^3+x^2-3 x+1,(\text{deg } 12),(\text{deg } 48) \\
            92 &  0 & x^{69},x-2,(\text{deg } 22) \\
            93 &  0 & (x+1)^8,x,(x-1)^8,x-2,(x^4-x^3+x^2-x+1)^6,(x^4+x^3+x^2+x+1)^6,(\text{deg } 27) \\
            94 &  0 & x^{47},x-2,(\text{deg } 46) \\
            95 &  0 & x^4,(x-1)^4,x-2,x^2+x-1,(\text{deg } 16),(\text{deg } 68) \\
            96 &  0 & x+1,x^{93},x-1,x-2 \\
            97 &  0 & x^2,(x-1)^2,x-2,(\text{deg } 92) \\
            98 &  0 & (x+1)^2,x^{52},x-2,x^4-x^3+x^2-3 x+1,(\text{deg } 39) \\
            99 & -8 & (x+1)^7,(x-1)^7,x-2,(x^2-x+1)^3,(x^2+x+1)^3,(x^4-x^3+x^2-x+1)^4, \\
            &    & (x^4+x^3+x^2+x+1)^4,(\text{deg } 8),(\text{deg } 8)^2,(\text{deg } 8)^2 \\
            100 &  0 & x^{78},(x-1)^2,x-2,x^2+x-1,(\text{deg } 17) \\ \hline
        \end{array} $}
    \caption{Determinants and factors of characteristic polynomials for $ N $ up to $ 100 $.
        For the factors of degree larger than 5 we just show the degrees.}
    \label{tab:characteristicpolynomialsfull}
\end{table}
\end{section}

\end{document}